\def\myMRbibitem{\@ifnextchar[\my@lbibitem\my@bibitem}
\def\mybiblabel#1#2{\@biblabel{{\hyperref{http://www.ams.org/mathscinet-getitem?mr=#1}{}{}{#2}}}}
\def\myhyperanchor#1{\Hy@raisedlink{\hyper@anchorstart{cite.#1}\hyper@anchorend}}
\def\my@lbibitem[#1]#2#3#4\par{%
    \item[\mybiblabel{#2}{#1}\myhyperanchor{#3}\hfill]#4%
    \@ifundefined{ifbackrefparscan}{}{\BR@backref{#3}}%
    \if@filesw{\let\protect\noexpand\immediate
       \write\@auxout{\string\bibcite{#3}{#1}}}\fi\ignorespaces%
}
\def\my@bibitem#1#2#3\par{%
    \refstepcounter\@listctr
    \item[\mybiblabel{#1}{\the\value\@listctr}\myhyperanchor{#2}\hfill]#3%
    \@ifundefined{ifbackrefparscan}{}{\BR@backref{#2}}%
    \if@filesw\immediate\write\@auxout
        {\string\bibcite{#2}{\the\value\@listctr}}\fi\ignorespaces%
}
\renewcommand*{\backref}[1]{}
\renewcommand*{\backrefalt}[4]{\quad \tiny 
    \ifcase #1 (Not cited.)%
    \or        (Cited on page~#2.)%
    \else      (Cited on pages~#2.)%
    \fi}
\theoremstyle{plain}
	\newtheorem{theo}{Theorem}
	\newtheorem{lemm}{Lemma}[section]
	\newtheorem{othe}[lemm]{Theorem}
	\newtheorem{coro}[lemm]{Corollary}
	\newtheorem{prop}[lemm]{Proposition}
\theoremstyle{definition}
	\newtheorem{clai}[lemm]{Claim}
	\newtheorem{ques}{Question}
	\newtheorem*{ack}{Acknowledgement}
\theoremstyle{remark}
	\newtheorem{exam}[lemm]{Example}
\newtheoremstyle{note}
  {5pt}
  {5pt}
  {\footnotesize}
  {}
  {\bfseries}
  {.}
  {.5em}
  {}
\theoremstyle{note}
	\newtheorem{rema}[lemm]{Remark}
  \def\CC{{\mathbb C}}
 \def\NN{{\mathbb N}}  
 \def\RR{{\mathbb R}}  
 \def\ZZ{{\mathbb Z}}
\def\la{\lambda}
\newcommand{\cC}{\mathcal{C}}
\newcommand{\cF}{\mathcal{F}}
\newcommand{\cG}{\mathcal{G}}
\newcommand{\cO}{\mathcal{O}}
\newcommand{\cU}{\mathcal{U}}
\newcommand{\cV}{\mathcal{V}}
\renewcommand{\epsilon}{\varepsilon}
\renewcommand{\rho}{\varrho}
\renewcommand{\phi}{\varphi}
\def\Diff{\operatorname{Diff}}
\def\Homeo{\operatorname{Homeo}}
\def\dim{\operatorname{dim}}
\def\supp{\operatorname{supp}}
\renewcommand{\setminus}{\smallsetminus}
\renewcommand{\angle}{\measuredangle}
\newcommand{\GL}{\mathrm{GL}}
\newcommand{\Ogroup}{\mathrm{O}}
\newcommand{\so}{\mathfrak{so}}
\newcommand{\Id}{\mathrm{Id}}
\DeclareMathOperator{\trace}{tr}
\newcommand{\flag}{\mathfrak{f}}    
\newcommand{\sflag}{\mathfrak{s}}   
\newcommand{\ordF}{\check{\mathcal{F}}} 
\newcommand{\sendtoouterspace}[1]{}
\numberwithin{equation}{section}         
\begin{document}

\title[Robust Vanishing of Lyapunov Exponents for IFS's]{Robust Vanishing of All Lyapunov Exponents for Iterated Function Systems}

\author[Bochi]{Jairo Bochi}
\address{Departamento de Matem\'atica, Pontif\'{\i}cia Universidade Cat\'olica do Rio de Janeiro} 
\urladdr{\href{http://www.mat.puc-rio.br/~jairo}{www.mat.puc-rio.br/{\textasciitilde}jairo}}
\email{jairo@mat.puc-rio.br}

\author[Bonatti]{Christian Bonatti}
\address{Institut de Math\'ematiques de Bourgogne}
\email{bonatti@u-bourgogne.fr}

\author[D\'\i az]{Lorenzo J.~D\'\i az}
\address{Departamento de Matem\'atica, Pontif\'{\i}cia Universidade Cat\'olica do Rio de Janeiro} 
\email{lodiaz@mat.puc-rio.br}

\date{\today}

\thanks{The authors received support from CNPq, FAPERJ, PRONEX (Brazil), Balzan--Palis Project, Brazil--France Cooperation Program in Mathematics, and ANR (France).}

\begin{abstract}
Given any compact connected manifold $M$, we describe $C^2$-open sets of iterated functions systems (IFS's)
admitting fully-supported ergodic measures whose Lyapunov exponents along $M$ are all zero. 
Moreover, these measures are approximated by measures supported on periodic orbits.

We also describe $C^1$-open sets of IFS's admitting ergodic measures of positive entropy whose Lyapunov exponents along $M$ are all zero. 

The proofs involve the construction of non-hyperbolic measures 
for the induced IFS's on the flag manifold.
\end{abstract}

\maketitle

\section{Introduction}\label{s.intro}

\subsection{The hunt for (non-)hyperbolic measures}

Since the Multiplicative Ergodic Theorem of Oseledets \cite{Oseledets}, 
the Lyapunov exponents of invariant probability measures 
are central in differentiable dynamics.
As Oseledets reveals in the first paragraph of his celebrated paper, 
he was already interested in the dynamical implications of non-zero Lyapunov exponents.
Many of these implications, at least in the case of volume-preserving dynamics,
were discovered by Pesin during the mid-seventies (see e.g.~\cite{Pesin}).
Later, Katok~\cite{Katok} obtained strong consequences in the non-conservative case.
Roughly speaking, the absence of zero Lyapunov exponents permits to recover many
dynamical properties from uniformly hyperbolicity.
We refer the reader to the book \cite{BarreiraP} for much information
about the dynamics of systems without zero Lyapunov exponents, 
which are called \emph{nonuniformly hyperbolic}.

However, nonuniform hyperbolicity is not necessarily ubiquitous.
In the conservative situation, KAM theory gives rise 
to elliptic behavior which is robust in high regularity.
In lower regularity, zero Lyapunov exponents can also occur generically (see \cite{Bochi}).

\medskip

Outside the conservative setting,
we consider the general problem of determining which
are the possible Lyapunov spectra of the ergodic invariant probabilities of 
a given dynamical system.

Topological--geometrical properties of the dynamics impose restrictions on
the Lyapunov exponents; to give an obvious example, if the system is 
uniformly hyperbolic (say, Anosov) then no zero Lyapunov exponents can occur.\footnote{A more sophisticated relation of this kind was obtained by Johnson, Palmer and Sell \cite{JPS}.}
The converse of the implication above is false:
there exist smooth systems whose Lyapunov exponents (with respect to all
ergodic invariant measures) are uniformly bounded away from $0$,
but are not uniformly hyperbolic: see \cite[Remark~1.1]{BaBoSch}, \cite{CLR}. 
Anyway, these examples seem to be very special, 
and it is natural to ask in what generality the 
lack of uniform hyperbolicity of a system forces the appearance of non-hyperbolic measures, 
that is, measures admitting at least one vanishing Lyapunov exponent. 
We are especially interested in the case that non-hyperbolic measures occur in a robust way,
and we would like to understand further properties of those measures
(e.g.\  multiplicity of zero exponents, support, approximation by periodic orbits, entropy, etc.)

An important result in this direction was obtained by 
Kleptsyn and Nalksy \cite{KN}, who gave $C^1$-robust examples
of diffeomorphisms having ergodic non-hyperbolic measures. 
Their examples exist on any compact manifold of dimension at least $3$,
and are partially hyperbolic with integrable circle fibers.
The construction is based on their earlier paper joint with Gorodetski and Ilyashenko \cite{GIKN}, 
which obtains similar results for iterated function systems (IFS's) of the circle. 

These ideas have been used in \cite{DiGo} to
determine properties of homoclinic classes that 
imply the existence of non-trivial non-hyperbolic measures,
however under $C^1$-generic assumptions.
In \cite{BDG}, the construction was tuned to 
enlarge the supports of these measures:
they can be taken as the whole homoclinic class.

\medskip

The non-hyperbolic measures in all results above
are obtained as limits of sequences of measures supported on periodic orbits
whose central Lyapunov exponent converges to zero.
Therefore the non-hyperbolicity of the system is detected by its periodic orbits.
The general principle that periodic orbits carry a great amount of information about the dynamics
has been successful in many occasions; see e.g.\ \cite{Sigmund} in the uniformly hyperbolic context, 
\cite{Katok} for nonuniformly hyperbolic context, and \cite{Mane}, \cite{ABC} in the $C^1$-generic context.
It is thus natural to reformulate the previous problems 
focusing on the simplest class of invariant measures, namely those supported on periodic orbits.

Another common feature of the non-hyperbolic measures from the results above
is that they have only one vanishing Lyapunov exponent.
Since there are open sets of diffeomorphisms 
with nonhyperbolic subbundles of any given dimension, one wonders if these systems 
have ergodic measures with multiple zero exponents.
There is a clear difficulty in passing to higher dimensions:
we lose the commutativity of the products of central derivatives,
therefore also the losing the continuity of the exponents.
Those properties were crucial in the constructions above.

Finally, we observe that all these non-hyperbolic measures have zero entropy.

\medskip

In this paper, we commence the study of the robust existence of multiple zero exponents.
Following the strategy above, we first attack the simpler case of IFS's.
We extend the result of \cite{GIKN},
replacing the circle fiber by an arbitrary compact manifold $M$,
and finding ergodic measures whose Lyapunov exponents along $M$ 
all vanish. 
More precisely, we prove two parallel extensions of the \cite{GIKN} result:
\begin{enumerate}
	\item We construct $C^2$-open sets of IFS's having ergodic measures with only zero exponents along $M$,
	full support, and approximable in a strong sense by measures supported on periodic orbits.
	\item We construct $C^1$-open sets of IFS's having ergodic measures with only zero exponents along $M$, and positive entropy.
\end{enumerate}
However, these two extensions are non-intersecting: The measures from the first result have zero entropy, while the measures from the second one are not fully supported.
The first result provides a more explicit construction of the measures, while the second one is an indirect
existence theorem.

\subsection{Precise statements of the main results}

An \emph{iterated function system}, or \emph{IFS}, is simply a finite collection 
$G=(g_0, \dots, g_{\ell-1})$
of (usually continuous) self-maps of a (usually compact) space $M$.
Then we consider the semigroup generated by these transformations.
An IFS can be embedded in a single dynamical system,
the \emph{$1$-step skew-product} $\phi_G \colon\ell^\ZZ\times M\to \ell^\ZZ\times M$ over the full shift $\sigma$ on $\ell^\ZZ=\{0,\dots,\ell-1\}^\ZZ$,
which is defined by $\phi_G(\omega, p)= (\sigma(\omega), g_{\omega_0}(p))$.

From now on, the ambient $M$ will be a compact connected manifold without boundary of dimension $d$.
We will consider IFS's $G$ of diffeomorphisms of $M$.
Then, for any ergodic $\phi_G$-invariant 
measure $\mu$, Oseledets theorem associates its \emph{fibered Lyapunov exponents}, 
which are the values that can occur as limits
$$
\lim_{n \to +\infty} \frac{1}{n} \log \|D(g_{\omega_{n-1}} \circ \cdots \circ g_{\omega_0})(x) \cdot v \|,
\quad \text{(where $v \in T_x M \setminus \{0\}$)}
$$
for a positive measure subset of points $((\omega_n),x) \in \ell^\ZZ \times M$.

\medskip

Our first result is as follows:

\begin{theo}\label{t.main}
Let $M$ be a compact connected manifold without boundary.
Then there exist an integer $\ell \ge 2$ and 
an open set $\mathcal{U}$ in $(\Diff^2(M))^\ell$ 
such that for any $G = (g_0,\dots,g_{\ell-1})\in\cU$ 
the $1$-step skew-product $\phi_G$ has an ergodic invariant measure $\mu$
whose support is the whole $\ell^\ZZ \times M$
and whose fibered Lyapunov exponents all vanish.
Moreover, the measure $\mu$ is the weak-star limit of a sequence of 
$\phi_G$-invariant measures $\mu_n$, each of these supported on a periodic orbit. 
\end{theo}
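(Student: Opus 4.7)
The plan is to lift the problem to the bundle $\pi\colon\cF(M)\to M$ of complete flags in $TM$: the derivative cocycle of $\phi_G$ induces a continuous skew-product $\hat\phi_G$ on $\ell^\ZZ\times\cF(M)$, and the fibered Lyapunov exponents of any $\phi_G$-ergodic measure $\mu$ are obtained as integrals of continuous observables $\psi_1,\dots,\psi_d$ (the logs of the infinitesimal expansion ratios along successive lines of the flag) against any $\hat\phi_G$-ergodic lift $\hat\mu$ of $\mu$. This is the main technical gain: the exponents become weak-$*$ continuous on the space of $\hat\phi_G$-invariant measures on the compact space $\ell^\ZZ\times\cF(M)$, whereas on $M$ they are only upper semicontinuous. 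The theorem is thus reduced to producing an ergodic $\hat\phi_G$-invariant measure $\hat\mu$ that projects onto a fully supported $\phi_G$-invariant measure $\mu$, satisfies $\int\psi_i\,d\hat\mu=0$ for every $i$, and arises as a weak-$*$ limit of periodic measures.

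For the construction, I would take the first generator $g_0$ to be a Morse--Smale diffeomorphism with a single sink $q$ and a single source $p$, whose derivatives at these fixed points are chosen so that suitable composite words in the IFS produce near-identity derivatives along all flag directions, and I would take the remaining generators $g_1,\dots,g_{\ell-1}$ to be nearly-isometric diffeomorphisms arranged so that the induced IFS $\hat G$ on the compact bundle $\cF(M)$ is minimal, with a robust form of uniform transitivity that persists under $C^2$ perturbations. Using these blocks I would inductively build a sequence of periodic orbits $P_n$ of $\phi_G$ with associated periodic measures $\mu_n$ satisfying:
\begin{enumerate}
\item the fibered Lyapunov exponents of $\mu_n$ all lie in $[-1/n,1/n]$;
\item each $P_{n+1}$ shadows $P_n$ along a proportion $\gamma_n$ of its period, with $\prod_n \gamma_n > 0$ (the GIKN approximation condition);
\item the orbits $P_n$ become $\epsilon_n$-dense in $\ell^\ZZ\times M$ for some sequence $\epsilon_n\to 0$.
\end{enumerate}
Condition (a) is the delicate step: each period alternates long blocks of $g_0$ iterated near $q$ (uniform contraction) and near $p$ (uniform expansion), interleaved with transition words in $g_1,\dots,g_{\ell-1}$ that permute flag directions so that, on average along the orbit, contraction and expansion balance in every line of the flag. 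The minimality of $\hat G$ on $\cF(M)$ is what makes these equilibrating transitions available.

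The GIKN criterion from \cite{GIKN} then ensures that the weak-$*$ limit $\mu=\lim\mu_n$ is ergodic, condition (c) gives $\supp\mu=\ell^\ZZ\times M$, and condition (a) combined with continuity of the $\psi_i$ on $\ell^\ZZ\times\cF(M)$ (applied to any weak-$*$ accumulation point $\hat\mu$ of canonical flag lifts $\hat\mu_n$) yields the vanishing of all fibered exponents of $\mu$. The main obstacle is precisely the non-commutativity of the derivative cocycle in dimension $d\ge 2$: Lyapunov exponents on $M$ are not continuous in the measure, so the naive strategy of forcing periodic exponents to zero and passing to the limit does not transfer from the circle case. Lifting to the flag bundle restores continuity at the cost of having to equidistribute the periodic orbits on $\cF(M)$ itself, a combinatorial problem that is resolved by the minimality of $\hat G$ and by the freedom to take $\ell$ as large as needed. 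The $C^2$ hypothesis enters through the bounded distortion estimates that are needed to carry out the construction uniformly on an open neighborhood of $G$ in $(\Diff^2(M))^\ell$.
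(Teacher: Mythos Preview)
Your overall framework---lift to the flag bundle so that the exponents become continuous functionals of invariant measures, then run a GIKN-type construction of periodic orbits---is exactly the paper's strategy. However, there is a genuine gap in how you close the argument.

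You carry out the GIKN shadowing on $\ell^\ZZ\times M$ (your condition~(b) is about the $P_n$), conclude that $\mu=\lim\mu_n$ is ergodic there, and then pass to ``any weak-$*$ accumulation point $\hat\mu$'' of the flag lifts $\hat\mu_n$. The problem is that nothing forces $\hat\mu$ to be ergodic, and without ergodicity the vanishing of $\int\psi_i\,d\hat\mu$ does \emph{not} imply that the fibered exponents of $\mu$ vanish. Concretely: if $d=2$ and $\mu$ has exponents $\lambda,-\lambda$ with $\lambda>0$, then the equal convex combination of the two ergodic flag lifts (with Furstenberg vectors $(\lambda,-\lambda)$ and $(-\lambda,\lambda)$) satisfies $\int\psi_1=\int\psi_2=0$, yet $\mu$ is hyperbolic. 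This is the discontinuity obstacle of \S\ref{ss.difficulty} and Remark~\ref{r.useless_semicont} reappearing in disguise: shadowing on $M$ gives no control on how the stable flags of $P_{n+1}$ sit relative to those of $P_n$.

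The paper's fix is to run the entire GIKN construction \emph{on} $\ell^\ZZ\times\cF M$: the inductively built periodic orbits are the stable-flag lifts $(z_n,\sflag(z_n))$, the shadowing in Proposition~\ref{p.improve}(\ref{i.p.2}) takes place in $\ell^\ZZ\times\cF M$, and Lemma~\ref{l.pretaporter} then yields an \emph{ergodic} limit $\nu$ upstairs. With $\nu$ ergodic, Proposition~\ref{p.Lyap_permutation} identifies $\vec\Lambda(\nu)$ with a permutation of the exponents of $\mu$, and $\vec\Lambda(\nu)=0$ finishes. To make this work the periodic orbits are kept with Lyapunov vectors in the cone $\cC$ (negative, simple exponents), so that each stable flag is a hyperbolic attracting fixed point and the flag component can be tracked through the bootstrapping.

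A secondary difference: your mechanism for balancing exponents (one Morse--Smale generator with a sink/source pair, plus flag-permuting near-isometries) is replaced in the paper by the \emph{maneuverability} property---for every $(x,\flag)$ and every sign vector in $\{-1,+1\}^d$ some generator has those signs on the diagonal of $M(Dg(x),\flag)$. This gives direct steering of each $\psi_i$ along any orbit (Lemma~\ref{l.prescribe}) and drives the correcting phase of Proposition~\ref{p.improve}. Finally, the $C^2$ hypothesis is used not for distortion on $M$ but so that the induced maps $\cF g_s$ are $C^1$, which is what the derivative estimates \eqref{e.norm_1st} and \eqref{e.unif_cont_norms} on the flag manifold require.
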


As we will see, our strategy consists on proving a 
stronger version of Theorem~\ref{t.main},
concerning IFS's on flag manifolds -- see Theorem~\ref{t.mainflag}.

\medskip

Another main result is the following:

\begin{theo}\label{t.C1easy}
Let $M$ be a compact connected manifold without boundary.
Then there exist an integer $\ell \ge 2$ and 
an open set $\mathcal{V}$ in $(\Diff^1(M))^\ell$ 
such that for any $G \in\cV$ 
there exists a compact $\phi_G$-invariant set $\Lambda_G \subset \ell^\ZZ \times M$
with the following properties:
\begin{enumerate}
\item\label{i.easy1} All Lyapunov exponents (tangent to $M$)
of all invariant probabilities with support contained in $\Lambda$ are zero.
\item\label{i.easy2} The restriction of $\phi_G$ to $\Lambda_G$ has positive topological entropy.
\end{enumerate}
In particular, $\phi_G$ has 
an ergodic invariant measure with positive metric entropy and only zero fibered Lyapunov exponents.
\end{theo}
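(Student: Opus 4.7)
My plan is to construct the $C^1$-open set $\cV$ by $C^1$-perturbing a carefully chosen ``seed'' IFS $G_0$, and then, for every $G \in \cV$, to produce a compact invariant set $\Lambda_G$ with the required properties via a persistence argument combined with the flag-manifold reduction underlying Theorem~\ref{t.mainflag}.

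For the seed, I would pick a coordinate chart $B \cong \RR^d$ around a basepoint $p \in M$ and take $g_0, \dots, g_{\ell-1}$ that agree outside a small neighborhood of $p$ and that, inside $B$, are Euclidean isometries fixing $p$, with derivatives $R_i := Dg_i(p) \in \SO(d)$ chosen so as to generate a finite subgroup. Then $\Lambda_{G_0} := \ell^\ZZ \times \{p\}$ is $\phi_{G_0}$-invariant, $\phi_{G_0}|_{\Lambda_{G_0}}$ is topologically conjugate to the full shift $\sigma$ (so its topological entropy equals $\log \ell > 0$), and the fibered derivative cocycle takes values in the finite subgroup $\langle R_0, \dots, R_{\ell-1}\rangle \subset \SO(d)$, so that all fibered Lyapunov exponents of every invariant probability supported on $\Lambda_{G_0}$ vanish. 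In particular both~\ref{i.easy1} and~\ref{i.easy2} hold at $G = G_0$.

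The first step towards $C^1$-robustness is persistence of the invariant set. For $G$ in a sufficiently small $C^1$-neighborhood of $G_0$, the shift provides enough ``symbolic hyperbolicity'' in the base to dominate the neutral $\SO(d)$-valued action in the fiber; a Schauder or graph-transform argument on the Banach space of continuous sections $\ell^\ZZ \to M$ then produces a continuous $\omega \mapsto x_G(\omega)$ with $g_{\omega_0}(x_G(\omega)) = x_G(\sigma\omega)$, and the graph $\Lambda_G := \{(\omega, x_G(\omega)) : \omega \in \ell^\ZZ\}$ is a compact $\phi_G$-invariant set topologically conjugate to the shift, immediately giving~\ref{i.easy2}.

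The main obstacle is~\ref{i.easy1}. The perturbed cocycle $Dg_{\omega_0}(x_G(\omega))$ is only $C^0$-close to the $\SO(d)$-valued cocycle $\omega \mapsto R_{\omega_0}$, and a generic $C^1$-perturbation typically produces small but \emph{nonzero} Lyapunov exponents of order $\e^2$, so a naive perturbation estimate fails. To force every exponent to vanish $C^1$-robustly I would lift the construction to the flag bundle $\ordF(TM)$ and invoke the blender-type mechanism underlying Theorem~\ref{t.mainflag}: that construction produces, in a $C^1$-open manner, a compact invariant subset of the flag-bundle skew-product whose projection to $\ell^\ZZ \times M$ refines $\Lambda_G$ and witnesses the vanishing of every exterior-power Lyapunov exponent simultaneously (equivalently, the cocycle restricted to $\Lambda_G$ admits a continuous invariant complete flag and is boundedly cohomologous to a cocycle in a compact subgroup of $\GL(d,\RR)$). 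Granting~\ref{i.easy1} and~\ref{i.easy2}, the ``in particular'' clause is an application of the variational principle: some ergodic $\phi_G$-invariant measure $\mu$ supported in $\Lambda_G$ satisfies $h_\mu(\phi_G) > 0$, and by~\ref{i.easy1} all of its fibered Lyapunov exponents vanish.
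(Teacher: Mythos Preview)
Your persistence step is the fatal gap. The ``symbolic hyperbolicity'' of the shift lives entirely in the $\ell^\ZZ$-direction and exerts no force whatsoever on the $M$-fiber: for a continuous invariant section $\omega \mapsto x_G(\omega)$ to survive a $C^1$-perturbation you need normal hyperbolicity \emph{along the fiber}, and your seed has none---the derivatives $Dg_i(p) = R_i \in \SO(d)$ are isometries. Already for $\ell=2$, $M = S^1$, $g_0 = g_1 = \id$, replacing $g_0$ by a small rotation destroys every common fixed point, and no continuous invariant section (hence no $\Lambda_G$ conjugate to the full shift) survives. Neither Schauder nor graph-transform produces a fixed point without a contraction in the relevant Banach space, and here there is none.

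Your argument for~(\ref{i.easy1}) is also not a proof. Theorem~\ref{t.mainflag} is a $C^2$ result built by bootstrapping periodic orbits; it does not supply a continuous invariant complete flag over your $\Lambda_G$, nor a cohomology into a compact group, and invoking it here as a black box is circular---you would be assuming precisely the $C^1$-robust zero-exponent mechanism you are trying to establish. As you yourself note, a generic $C^1$-perturbation of an $\SO(d)$-valued cocycle \emph{does} produce nonzero exponents; nothing in your outline prevents this.

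The paper's route is entirely different: no seed, no perturbation. The $C^1$-open set $\cV$ is defined by \emph{bi-maneuverability}: the generators split into two halves $G_0$, $G_1$, each with the maneuverability property (for every $(x,\flag)\in\cF M$ and every sign vector $t\in\{\pm1\}^d$ there is $g$ in that half with $t_i \log M_{i,i}(Dg(x),\flag) > 0$). For each $\theta \in 2^\ZZ$ one builds, symbol by symbol, a sequence $\omega(\theta)$---choosing at step $n$ from the half dictated by $\theta_n$---so that the partial sums $\log M_{i,i}(Dg_{[\omega_{n-1}\cdots\omega_0]}(x_0),\flag_0)$ stay uniformly bounded. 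The set $\Lambda_G$ is the projection to $\ell^\ZZ\times M$ of the closure of these orbits in $\ell^\ZZ\times\cF M$. Zero exponents follow from the boundedness of the Birkhoff sums via the Furstenberg formula and Proposition~\ref{p.Lyap_permutation}; positive entropy follows because $\Lambda_G$ factors onto the full $2$-shift via $\omega \mapsto \theta$.
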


The last assertion follows immediately from the Variational Principle.

Compared to Theorem~\ref{t.main}, 
Theorem~\ref{t.C1easy} improves the robustness class from $C^2$ to $C^1$.
The non-hyperbolic measures produced by Theorem~\ref{t.C1easy} 
have the additional property of positive entropy,
but clearly do not have full support.
Moreover, we do not know if those measures can be approximated by measures supported on periodic orbits.

Theorem~\ref{t.C1easy} has a simpler proof than Theorem~\ref{t.main}.
The definition of the $C^2$-open set $\cU$ from Theorem~\ref{t.main} 
involves basically two conditions, ``maneuverability'' and ``minimality'',
while the $C^1$-open set $\cV$ from Theorem~\ref{t.C1easy} only requires maneuverability.
In particular, the sets $\cU$ and $\cV$ have nonempty intersection.

\subsection{Questions}\label{ss.questions}  

In view of our results extending \cite{GIKN}, 
it is natural to expect a corresponding generalization of \cite{KN},
that is, the existence of 
open examples of partially hyperbolic diffeomorphisms 
with multidimensional center so that there are measures 
all whose central exponents vanish.

\medskip

We list other questions, mainly about IFS's:

\begin{ques}
Are there $C^1$- or $C^2$-robust examples with non-hyperbolic measures of full support and positive entropy?
\end{ques}

\begin{ques}
It is possible to improve Theorem~\ref{t.main} so that the set of measures that satisfies the conclusions
is dense (or generic) in the weak-star topology? 
\end{ques}

Consider IFS's of volume-preserving or symplectic diffeomorphisms.
(See \cite{KoroN} for results and problems about such systems.)
The proof of Theorem~\ref{t.C1easy} can be easily adapted for the volume-preserving case.

\begin{ques}
Does the analogue of Theorem~\ref{t.main} hold true in conservative contexts?
A more interesting and difficult question is whether the measure $\mu$ in the theorem can be taken of the form
$\mu = \mu_0 \times m$, where $\mu_0$ is a shift-invariant measure and $m$ is the volume on the fibers $M$?
\end{ques}

\section{Outlines of the proofs}\label{s.ideas}

We first outline the proof of Theorem~\ref{t.main}, explaining the
main ingredients and difficulties of it, and discussing the novelties in comparison with \cite{GIKN}.
We also state a stronger result which implies Theorem~\ref{t.main}.

Later, we will explain how the tools developed to prove Theorem~\ref{t.main}
can be applied to yield the easier Theorem~\ref{t.C1easy}.

\subsection{Ergodic measures as limit of periodic measures} \label{ss.ergodic}

The starting point is Lemma~\ref{l.pretaporter} below,
which gives sufficient conditions for a sequence of invariant probability measures 
supported on periodic orbits to converge to an ergodic measure,
and also permits to determine the support of the limit measure.
Let us state this lemma precisely.

Let $h \colon N \to N$ be a homeomorphism of a compact metric space $N$,
and let $\cO'$ and $\cO$ be periodic orbits of $h$.
Let $\epsilon>0$ and $0<\kappa<1$.
We say that
\emph{$\cO'$ $\epsilon$-shadows $\cO$ during a proportion $1-\kappa$ of the time} if 
$$
\frac{1}{p'}
\# \left\{x' \in \cO' ; \; 
\text{there is } x \in \cO \text{ with } \max_{0\le i < p} d(h^i( x'), h^i( x)) < \epsilon \right\}
\ge 1-\kappa,
$$
where $p$ and $p'$ are the periods of $\cO$ and $\cO'$, respectively.
(Notice the asymmetry of the relation.)

\begin{lemm}[Limit of periodic measures]\label{l.pretaporter}
Fix a homeomorphism $h \colon N \to N$ of a compact metric space $N$.
Suppose $(\cO_n)$ is a sequence of periodic orbits of $h$ whose periods $p_n$ tend to infinity.
Suppose further that
the orbit $\cO_{n+1}$ $\epsilon_{n}$-shadows $\cO_{n}$ during a proportion $1-\kappa_n$ of the time,
where the sequences $\epsilon_n>0$ and $0<\kappa_n<1$ satisfy
$$
\sum_n \epsilon_n < \infty \quad \text{and} \quad \prod_n (1-\kappa_n) > 0.
$$
For each $n$, let $\nu_n$ be the invariant probability supported on $\cO_n$. 
Then the sequence $(\nu_n)$ converges in the weak-star topology to a measure $\nu$
that is ergodic for $h$ and 
whose support is given by
$$
\supp \nu = \bigcap_{n=1}^\infty \overline{\bigcup_{m=n}^{\infty} \cO_m} \,  .
$$ 
\end{lemm}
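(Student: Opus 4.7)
My plan is to establish the three conclusions (weak-star convergence, support formula, ergodicity) in turn, viewing the shadowing condition as a tool for transporting orbital averages from $\cO_n$ up to $\cO_m$ whenever $m > n$. For weak-star convergence, I would fix a Lipschitz function $f$ on $N$ and use the $h$-invariance of $\nu_{n+1}$ to rewrite $\int f \, d\nu_{n+1}$ as the $\cO_{n+1}$-average of the time-$p_n$ Birkhoff averages $\frac{1}{p_n}\sum_{i=0}^{p_n-1} f(h^i x')$. Splitting the outer sum into the $x' \in \cO_{n+1}$ that $\epsilon_n$-shadow some $x \in \cO_n$ (a fraction $\ge 1-\kappa_n$) versus the remaining bad points, the inner sum over good $x'$ lies within $\mathrm{Lip}(f)\,\epsilon_n$ of $\int f \, d\nu_n$, while the bad points contribute at most $2\kappa_n \|f\|_\infty$. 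This yields
\[
\Bigl|\int f \, d\nu_{n+1} - \int f \, d\nu_n\Bigr| \le \mathrm{Lip}(f)\,\epsilon_n + 2\|f\|_\infty\,\kappa_n,
\]
which is summable since $\prod(1-\kappa_n) > 0$ forces $\sum \kappa_n < \infty$. By density of Lipschitz functions in $C(N)$, the sequence $(\nu_n)$ is Cauchy in weak-star, hence converges to some probability $\nu$. The support formula then follows from Portmanteau: if $x$ has a neighborhood $U$ disjoint from $\cO_m$ for all $m \ge n_0$, then $\nu(U) \le \liminf_m \nu_m(U) = 0$, so $x \notin \supp \nu$; conversely, if $x \in \supp \nu$, each neighborhood $U$ of $x$ satisfies $\liminf_m \nu_m(U) \ge \nu(U) > 0$, so $U$ meets infinitely many $\cO_m$.

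The heart of the argument is ergodicity. Given a continuous $f$, set $\phi_n(x) \eqdef \frac{1}{p_n}\sum_{i=0}^{p_n-1} f(h^i x)$ and $c \eqdef \int f \, d\nu$. Von Neumann's mean ergodic theorem gives $\phi_n \to \bar f$ in $L^2(\nu)$, where $\bar f$ is the orthogonal projection of $f$ onto $h$-invariants. It suffices to show $\phi_n \to c$ in $L^2(\nu)$: for then $\bar f = c$ $\nu$-a.e.\ for every continuous $f$, which forces ergodicity. By weak-star convergence and continuity of $(\phi_n - c)^2$,
\[
\int (\phi_n - c)^2 \, d\nu \;=\; \lim_{m \to \infty} \frac{1}{p_m} \sum_{x' \in \cO_m} (\phi_n(x') - c)^2.
\]
The crucial input is that iterating the shadowing produces, for each chain-good $x' \in \cO_m$ (one that successively shadows chosen points of $\cO_{m-1}, \dots, \cO_{n+1}$), a $y \in \cO_n$ with $\max_{0 \le i < p_n} d(h^i x', h^i y) \le E_n \eqdef \sum_{k \ge n}\epsilon_k$; on such $x'$, $\phi_n(x')$ is within the modulus of continuity $\omega_f(E_n)$ of $\int f \, d\nu_n$, hence close to $c$.

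The main obstacle is to estimate, uniformly in $m \ge n$, the density of chain-good points in $\cO_m$ as $n \to \infty$. The subtlety is that the shadowing in the hypothesis is anchored at time $0$, so the shadowing maps $\pi_k$ between consecutive orbits are neither canonical nor $h$-equivariant; a naive union bound through the chain does not control the pull-back to $\cO_m$ of the bad set at level $k$. I would address this not by fixing a chosen shadowing function, but instead by bounding the $\nu_m$-measure of the set of $x' \in \cO_m$ that fail to shadow some $y \in \cO_n$, using the $h$-invariance of $\nu_m$ to average over cyclic shifts and thereby reduce the estimate at each level to the single-step bound $\kappa_k$. Combining the levels, the density of chain-good points in $\cO_m$ should exceed $1 - \sum_{k \ge n}\kappa_k$, which tends to $1$ by summability. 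Plugging back, $\int (\phi_n - c)^2 \, d\nu$ is then bounded above by a constant multiple of $\omega_f(E_n)^2 + \bigl(\int f \, d\nu_n - c\bigr)^2 + \|f\|_\infty^2 \sum_{k \ge n}\kappa_k$, which tends to $0$, completing the proof.
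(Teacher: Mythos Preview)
The paper does not prove this lemma; it is quoted verbatim from Lemma~2.5 of \cite{BDG} (which refines Lemma~2 of \cite{GIKN}). So there is no in-paper proof to compare against, and I will just assess your argument.

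Your weak-star convergence step is correct and clean. There are, however, two real gaps.

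\emph{Support.} Both of your Portmanteau arguments prove the \emph{same} inclusion, namely $\supp\nu\subset\bigcap_n\overline{\bigcup_{m\ge n}\cO_m}$; your ``conversely'' is not a converse but a restatement. The reverse inclusion is the substantive one: you must show that every ball around a point $y\in\cO_{n_0}$ carries $\nu_m$-mass bounded below \emph{uniformly in $m\ge n_0$}, and this genuinely uses the shadowing hypothesis, not just weak-star semicontinuity.

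\emph{Ergodicity.} You correctly locate the obstacle --- the one-step shadowing assignments are not $h$-equivariant, so chaining them is delicate --- but your proposed fix (``average over cyclic shifts to reduce to the single-step bound $\kappa_k$'') does not close the gap as written. Concretely: if $x'\in\cO_{m+1}$ $\epsilon_m$-shadows $y\in\cO_m$ for time $p_m$, then $h^jx'$ $\epsilon_m$-shadows $h^jy$ for time $p_n$ only when $0\le j\le p_m-p_n$; the last $p_n-1$ shifts are lost. Carrying this boundary loss through the induction produces a cumulative error of order $p_n\sum_{k>n}p_k^{-1}$, which the hypotheses do not control, so the claimed bound ``density $\ge 1-\sum_{k\ge n}\kappa_k$'' is not justified.

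A simpler route bypasses the chain-good counting entirely. Your one-step estimate
\[
\Bigl|\int\psi\,d\nu_{k+1}-\int\psi\,d\nu_k\Bigr|\le \mathrm{Lip}(\psi)\,\epsilon_k + 2\|\psi\|_\infty\,\kappa_k
\]
holds for any Lipschitz $\psi$. Apply it with $\psi=(\phi_n-c_n)^2$, where $c_n=\int f\,d\nu_n$; this $\psi$ is Lipschitz (constant $\le 4\|f\|_\infty\mathrm{Lip}(f)$) and vanishes identically on $\cO_n$. Telescoping from $k=n$ to $m-1$ and letting $m\to\infty$ gives $\int(\phi_n-c_n)^2\,d\nu\le C_f\,(E_n+K_n)\to 0$; since $c_n\to c$ and Lipschitz functions are dense, $\phi_n\to c$ in $L^2(\nu)$ and ergodicity follows.
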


The lemma is just a rephrasing of Lemma~2.5 from \cite{BDG},
which in its turn is a refined version of Lemma~2 from \cite{GIKN}.

\subsection{The main difficulty with higher dimensions}\label{ss.difficulty}

We want to find a sequence $(\cO_n)$ of periodic orbits for the skew-product map $\phi_G$
that fits in the situation of Lemma~\ref{l.pretaporter}
and such that the resulting limit measure has the desired properties of zero (fibered) Lyapunov exponents 
and full support.

In the paper \cite{GIKN}, which deals with the one-dimensional case (i.e., $M$ is the circle),
the sequence of periodic orbits is constructed in such a way that the Lyapunov exponent
converges to zero. 
The construction is recursive: each new orbit $\cO_{n+1}$ 
is chosen in order to \emph{improve} the previous one
$\cO_{n}$, in the sense that the new Lyapunov exponent is closer to zero.
It is easy to modify their construction so to ensure that each new orbit is denser in
the ambient space, and thus, as we now know, obtain full support for the limit measure.
There are practically no requirements on starting orbit $\cO_1$: it needs only to be attracting.
We call this the \emph{bootstrapping procedure}, because it starts from nothing 
and by successive improvements eventually achieves its goal.
We will give more details about it later (\S~\ref{ss.boot}).

With Lemma~\ref{l.pretaporter} we can guarantee ergodicity and full support of the limit measure,
and so we are left to control its Lyapunov exponent.
In the one-dimensional situation of \cite{GIKN}, the Lyapunov exponent is given by an integral and so 
its dependence on the measure is continuous with respect to the weak-star topology.
Since the Lyapunov exponent along the sequence provided by the bootstrapping procedure 
converges to zero, we obtain a limit measure with zero Lyapunov exponent, as desired.

However, if $M$ has dimension $d > 1$ then
the Lyapunov exponents are no longer given by integrals.
Worse still, they can indeed be discontinuous as functions of the measure;
the best that can be said is that the top Lyapunov exponent is upper semicontinuous,
while the bottom Lyapunov exponent is lower semicontinuous.
So, even if all Lyapunov exponents along the orbit $\cO_n$ converge to zero as $n \to \infty$,
there is no guarantee that the limit measure will have zero Lyapunov exponents.

\begin{rema}\label{r.useless_semicont}
Usually, semicontinuity helps when we are trying to produce equal Lyapunov exponents (as e.g.\ in \cite{Bochi}). We could use semicontinuity here if we were able to apply Lemma~\ref{l.pretaporter} with 
$1-\kappa_n$ arbitrarily small, but this is not the case. Incidentally, we can apply the lemma with $\epsilon_n$ arbitrarily small (see the proof of Theorem~\ref{t.conditions} in \S~\ref{s.bootstrap}), but we make no use of this fact. 
\end{rema}

We overcome this difficulty by working with a skew-product on a larger space called the \emph{flag bundle}. This permits us to recover continuity of Lyapunov exponents and thus prove Theorem~\ref{t.main} by the same bootstrapping procedure. Passing to the flag bundle, however, has a price: we lose one order of differentiability, and this is basically why our results need $C^2$ regularity, as opposed to the $C^1$ regularity required by \cite{GIKN}.

\subsection{Flag dynamics}\label{ss.flag}

If $M$ is a compact manifold of dimension $d$,
we denote by $\cF M$ the \emph{flag bundle} of $M$,
that is, the set of $(x, F_1, \dots, F_d)$ where $x \in M$ and
$F_1 \subset \cdots \subset F_d$ are nested vector subspaces of the tangent space $T_x M$,
with $\dim F_i = i$.
Such a sequence of subspaces is called a \emph{flag} on $T_x M$.
Then $\cF M$ is a compact manifold, and the natural projection $\cF M \to M$ defines a fiber bundle.
Every $C^r$ diffeomorphism $g \colon M \to M$ can be lifted to a $C^{r-1}$ diffeomorphism
$\cF g \colon \cF M \to \cF M$ in the natural way, namely
$$
\cF g \colon (x, F_1, \dots, F_d) \mapsto \big( g(x) , Dg(x)(F_1), \dots, Dg(x)(F_d) \big) \, .
$$

Given an IFS on $M$
with set of generators $G = (g_0, \dots, g_{\ell-1}) \in \Diff^r(M)$, $r\ge 1$,
then we consider the IFS on the flag bundle $\cF M$ with set of generators 
$\cF G := (\cF g_0, \dots, \cF g_{\ell-1})$.
Corresponding to this new IFS, we have a $1$-step skew-product on $\ell^\ZZ \times \cF M$
which we will denote by $\cF \phi_G$.
Therefore we have the following commuting diagram, where then vertical arrows are the obvious projections:
\begin{equation}\label{e.2_skew_products}
\begin{tikzcd}
\ell^\ZZ \times \cF M \arrow{r}{\cF \phi_G} \arrow{d} &\ell^\ZZ \times \cF M \arrow{d}\\
\ell^\ZZ \times M     \arrow{r}{\phi_G} \arrow{d}     &\ell^\ZZ \times M     \arrow{d}\\
\ell^\ZZ \arrow{r}{\sigma}                            &\ell^\ZZ
\end{tikzcd}
\end{equation}

The main result we actually prove in this paper is the following:

\begin{theo}\label{t.mainflag}
Let $M$ be a compact connected manifold without boundary.
Then there exist an integer $\ell \ge 2$ and 
an open set $\mathcal{U}$ in $(\Diff^2(M))^\ell$ 
such that for any $G = (g_0,\dots,g_{\ell-1})\in\cU$ 
the $1$-step skew-product $\cF \phi_G$ has an ergodic invariant measure $\nu$
whose support is the whole $\ell^\ZZ \times \cF M$
and whose fibered Lyapunov exponents all vanish.
Moreover, the measure $\nu$ is the weak-star limit of a sequence of 
$\cF \phi_G$-invariant measures $\nu_n$, each of these supported on a periodic orbit.  
\end{theo}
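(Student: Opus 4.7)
The plan is to run the \emph{bootstrapping procedure} of \cite{GIKN} on the flag bundle $\ell^\ZZ\times\cF M$ rather than on $\ell^\ZZ\times M$, exploiting the fact that upstairs the fibered Lyapunov exponents become \emph{continuous} functions of the invariant measure. Concretely, I would define $\cU\subset (\Diff^2(M))^\ell$ by two geometric conditions on the generators $G=(g_0,\dots,g_{\ell-1})$: a \emph{maneuverability} condition, ensuring that at certain points of $\cF M$ one can find short compositions of the $\cF g_j$'s that push each partial Lyapunov exponent $\lambda_1+\cdots+\lambda_i$ both upward and downward in a controlled way; and a \emph{minimality} condition, requiring that the forward semigroup generated by $\cF G$ acts minimally on $\cF M$. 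Both conditions are $C^2$-open; nonemptiness of $\cU$ would be established via an explicit model example with $\ell$ large enough to leave room for rotational and expanding/contracting behavior along every flag direction.

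The reason for working upstairs is that along a flag $F_1\subset\cdots\subset F_d$, the derivative of any $g\in\Diff^2(M)$ has, in adapted orthonormal frames (at both source and target), an upper-triangular representation whose diagonal entries are continuous functions on $\cF M$. Consequently, the $i$-th fibered Lyapunov exponent of any $\cF\phi_G$-invariant measure $\nu$ is the integral over $\ell^\ZZ\times \cF M$ of a \emph{continuous} function. The need for $C^2$ regularity on $M$ comes precisely from requiring that the lift $\cF g$ be $C^1$, so that these diagonal entries are continuous on $\cF M$. It follows that each fibered Lyapunov exponent depends continuously on $\nu$ in the weak-star topology.

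Armed with continuity, I would construct recursively a sequence $(\cO_n)$ of periodic orbits of $\cF\phi_G$ and parameters $\epsilon_n,\kappa_n>0$ with $\sum \epsilon_n<\infty$ and $\prod(1-\kappa_n)>0$, arranging that (i) $\cO_{n+1}$ $\epsilon_n$-shadows $\cO_n$ for a proportion $1-\kappa_n$ of the time; (ii) all $d$ fibered Lyapunov exponents of the periodic measure $\nu_n$ on $\cO_n$ have magnitude at most $1/n$; and (iii) the union $\bigcup_{m\geq n}\cO_m$ is $1/n$-dense in $\ell^\ZZ\times \cF M$. Starting from an attracting periodic orbit, $\cO_{n+1}$ is produced from $\cO_n$ by allocating most of its period to close shadowing and devoting short excursions to minimality-assisted visits of prescribed balls (yielding (iii)) and to maneuverability-assisted corrections that drive every partial exponent closer to $0$ (yielding (ii)). Lemma~\ref{l.pretaporter} then provides an ergodic limit $\nu$ of full support $\ell^\ZZ\times \cF M$, and the continuity of the exponents transfers the vanishing from the $\nu_n$ to $\nu$.

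The main difficulty lies in step (ii) of the inductive construction: simultaneously steering all $d$ fibered exponents to zero rather than the single exponent of \cite{GIKN}. The partial-sum exponents $\lambda_1+\cdots+\lambda_i$ are integrals of $d$ different continuous functions on $\cF M$, and a correction that decreases one of them may disturb another; the maneuverability condition must therefore be strong enough to allow one to correct each partial sum essentially independently. The delicate bookkeeping -- treating the partial sums in order of increasing $i$, quantifying the cost of each maneuver on every partial sum, and ensuring that the total time spent on corrections and on minimality excursions remains a small fraction $\kappa_n$ of the period of $\cO_{n+1}$ while the total spatial error sums to something finite -- is the technical core of the argument.
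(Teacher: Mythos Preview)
Your proposal is correct and follows the paper's route essentially verbatim: open conditions of maneuverability and positive minimality on $\cF M$, bootstrapping via Lemma~\ref{l.pretaporter} on periodic orbits of $\cF\phi_G$, and continuity of the Furstenberg integrals \eqref{e.I_j} to pass the exponents to the limit. One small correction worth noting: the diagonal entries $M_{i,i}(Dg(x),\flag)$ are continuous already for $g\in\Diff^1(M)$, so that is not where $C^2$ enters; rather, $C^2$ is needed so that each $\cF g_s$ is a $C^1$ \emph{diffeomorphism}, which is what drives the derivative estimates on $\cF M$ (contraction of balls, uniform continuity of $D(\cF g_{[w]})$) in the inductive step---and in that step the paper corrects all $d$ diagonals simultaneously (Lemma~\ref{l.prescribe}, with target vector $(\chi_i)$ chosen collinear to the current $(\lambda_i)$) rather than treating the partial sums in order of increasing~$i$ as you suggest.
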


Theorem~\ref{t.main} follows; let us see why.

If a probability measure $\nu$ on $\ell^\ZZ \times \cF M$ is $\cF\phi_G$-invariant and ergodic,
then its projection $\mu$ on $\ell^\ZZ \times M$ is $\phi_G$-invariant and ergodic.
As we will see later, the fibered Lyapunov exponents of (ergodic) $\nu$ can be expressed
as linear functions of the integrals 
\begin{equation}\label{e.I_j}
I_j (\nu) := 
\int_{\big( (\omega_n), x, (F_i) \big) \in \ell^\ZZ \times \cF M} 
\log \left| \det \left( Dg_{\omega_0}(x)|_{F_j}\right) \right|  \, d \nu.
\end{equation}

This has important consequences:
\begin{enumerate}
	\item\label{i.consequence_1} The fibered Lyapunov exponents of $\nu$ vary continuously with respect to $\nu$,
	among ergodic measures: if $\nu_n$ are ergodic measures converging to an ergodic measure $\nu$, then the fibered Lyapunov exponents of $\nu_n$ converge to those of $\nu$.
	\item\label{i.consequence_2} All the fibered Lyapunov exponents of (ergodic) $\nu$ vanish
if and only if all the fibered Lyapunov exponents of $\mu$ vanish.
\end{enumerate}

\begin{rema}
In fact, each fibered Lyapunov exponent of $\mu$ is also a fibered Lyapunov exponent of $\nu$
(see Example~\ref{ex.main}).
Notice that there is no contradiction with the aforementioned discontinuity of
the fibered Lyapunov exponents with respect to $\mu$.
Indeed,
a convergent sequence of $\phi_G$-invariant measures $\mu_n$  whose limit is ergodic
may fail (even after passing to a subsequence) 
to lift to a converging sequence of $\cF\phi_G$-invariant measures $\nu_n$ whose limit is ergodic.
\end{rema}

On one hand, property~(\ref{i.consequence_2})
makes Theorem~\ref{t.main} a corollary of Theorem~\ref{t.mainflag}.
On the other hand, property~(\ref{i.consequence_1}) extirpates the difficulty
explained before, and so allows us to prove Theorem~\ref{t.mainflag}
by the bootstrapping procedure,
as we explain next.

\subsection{The bootstrapping procedure on the flag bundle}\label{ss.boot}

An important feature of the bootstrapping procedure of \cite{GIKN}
is that each periodic orbit must be hyperbolic attracting (along the fiber);
this permits us to find each new orbit as a fixed point of a contraction.
So let us see how to detect contraction.

A linear isomorphism $L$ of $\RR^d$  
induces a diffeomorphism $\cF L$ of the corresponding manifold of flags.
If all eigenvalues of $L$ have different moduli, then the map $\cF L$ has a unique attracting fixed point.
This is instinctively clear; see Figure~\ref{f.flags}.
(Moreover, the map $\cF L$ is Morse--Smale, has $d!$ fixed points, and no periodic points other than these; see \cite{ShubVasquez}.)

\begin{figure}[ht]
\begin{center}
\includegraphics[width=.45\textwidth]{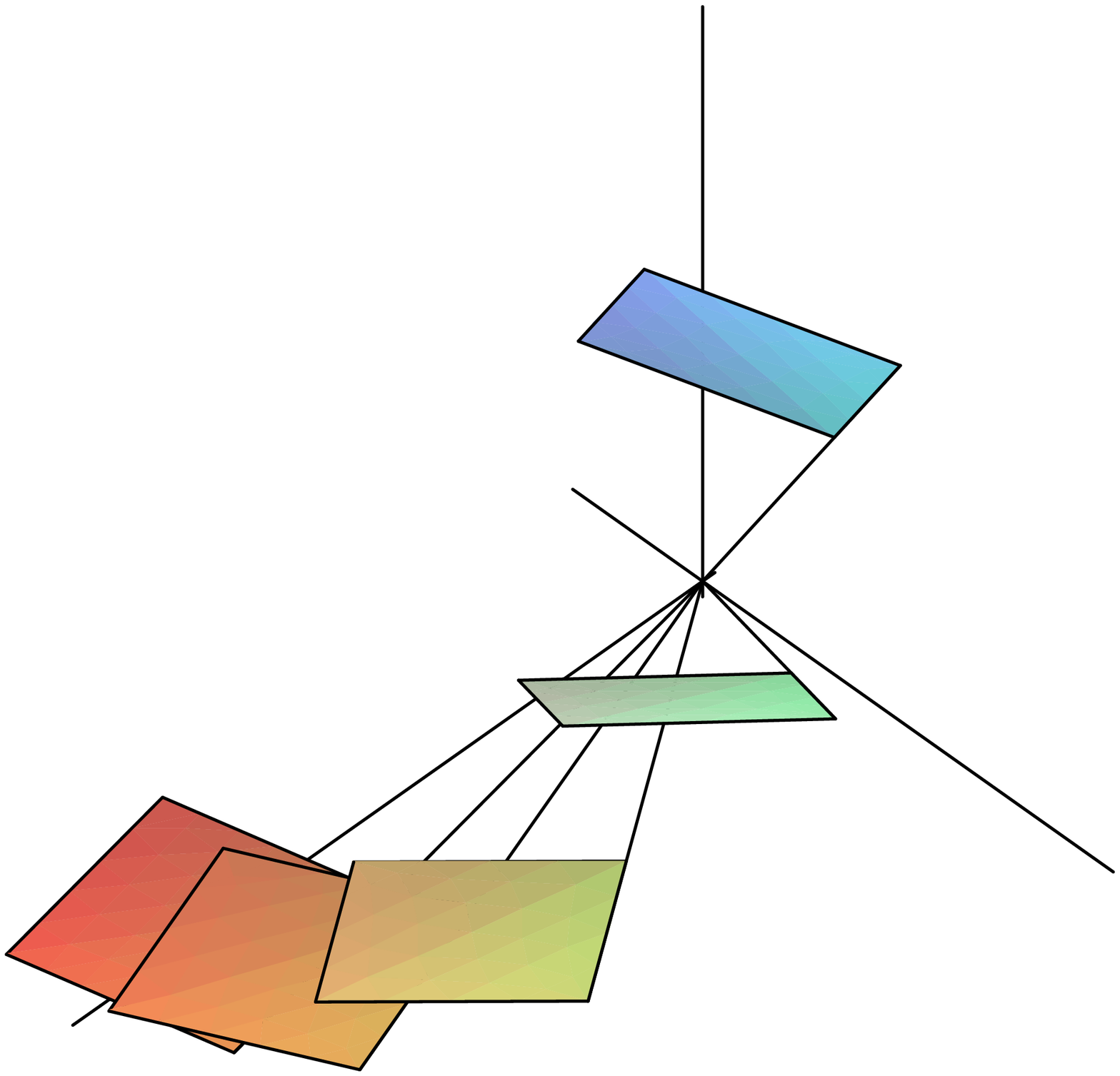}
\end{center}
\caption{A flag and its first four iterates under $\cF L$, where $L \colon \RR^3 \to \RR^3$ 
has eigenvalues $2$, $1$, $1/2$; the eigendirections are also pictured.}
\label{f.flags}
\end{figure}

Hence if $x$ is a fixed point of diffeomorphism $g \colon M \to M$
and the moduli of the eigenvalues of $Dg(x)$ are all different and less than $1$,
then there is a unique flag $\flag$ on $T_x M$ such that $(x,\flag)\in \cF M$ 
is an attracting fixed point for $\cF g$.
The converse is true: all hyperbolic attracting fixed points of $\cF g$ appear in this way.

Now let us sketch how to carry out a step in the bootstrapping procedure.
Of course, some conditions are needed for the IFS $G$;
we will see along the way how these conditions should look like.

Let us assume it is given a periodic point $(\omega, \xi) \in \ell^\ZZ \times \cF M$
for $\cF \phi_G$ whose fibered Lyapunov exponents are all negative, different, and close to zero.
Let $p$ be the period of the orbit.
Then $\omega$ consists of infinite repetitions of the word $w = \omega_0 \dots \omega_{p-1}$.
Then the ``point-flag'' $\xi$ is fixed and hyperbolic attracting for the diffeomorphism 
$h_1 := \cF g_{\omega_{p-1}} \circ \cdots \circ \cF g_{\omega_0}$.
Our aim is to find a new periodic point $(\tilde \omega, \tilde \xi)$
for $\cF \phi_G$ with the following properties:
\begin{itemize}
\item The orbit of $(\tilde \omega, \tilde \xi)$ closely shadows
the orbit of $(\omega, \xi)$ for most of the time.
\item The fibered Lyapunov exponents of the new orbit are still negative and different 
but closer to zero than those of the initial orbit.
\item The new orbit is ``denser'' in the ambient space than the previous orbit.
\end{itemize}
This is roughly done as follows (see also Figure~\ref{f.strategy}):
\begin{itemize}

\item Take a small ball $B_0$ around $\xi$ in $\cF M$.
Let $n$ be very large.
The ball $B_0$ is mapped by $h_1^n$ into a very small ball $B_1$ around $\xi$.

\item Then we select a long sequence of maps $g_{s_1}$, $g_{s_2}$, \dots, $g_{s_m}$ in  $G$
such that the derivative of 
$h_2 := \cF g_{s_m} \circ \cdots \circ \cF g_{s_1}$ at $\xi$ is strongly expanding.
This expansion, however, is not strong enough to compensate the previous contraction, so $h_2$ sends $B_1$ into a ball $B_2$ much bigger than $B_1$ but still much smaller than $B_0$.
(Actually the expansion factors must be chosen more carefully, but we will leave the details for later.)
All this is possible
if the set $G$ has a property that we call \emph{maneuverability}.
(See Section~\ref{s.conditions} for a precise definition.)

\item Next, we select maps $\cF g_{t_1}$, $\cF g_{t_2}$, \dots, $\cF g_{t_k}$,
such that: 
\begin{itemize}
	\item the union of successive images of the ball $B_2$ gets close to any point in $\cF M$
(we say that this orbit makes a ``tour'');
	\item the last image, which is $h_3(B_2)$ where $h_3 := \cF g_{t_k} \circ \cdots \circ \cF g_{t_1}$
is contained in $B_0$ (we say that the orbit ``goes home'').
\end{itemize}
The length $k$ of this part must be large, but it will be much smaller than either $n$ or $m$,
so there is plenty of space for $B_3$ to fit inside $B_0$.
(Actually the tour must be made on $\ell^\ZZ \times \cF M$, but this is not difficult to obtain.)
This ``\emph{tour and go home}'' phase is possible if the IFS $\cF G$ is \emph{positively minimal} on $\cF M$
(see \S~\ref{ss.prelim_IFS}).

\item Since the composed map $h_3 \circ h_2 \circ h_1^n$ sends the ball $B_0$ inside itself,
it has a fixed point $\tilde \xi$.
Using that the derivatives of the maps $\cF g_s$ are uniformly continuous,
we are able to show that $\tilde \xi$ is an attracting fixed point.
Moreover, we can show that the $h_2$ part
has the effect of making the Lyapunov exponents closer to zero.
The effect of $h_3$ in the Lyapunov exponents is negligible,
because the length $k$, despite big, is much smaller the length $pn+m$ of $h_2 \circ h_1^n$.

\item So we find the desired periodic point $(\tilde \omega, \tilde \xi)$,
where $\tilde \omega$ consists of infinite repetitions of the word 
$(\omega_0 \dots \omega_{p-1})^n s_1 \dots s_m t_1 \dots t_k$.

\end{itemize}

\psfrag{B}{\footnotesize $B_0$} 
\psfrag{C}{\footnotesize $B_1$} 
\psfrag{D}{\footnotesize $B_2$} 
\psfrag{E}{\footnotesize $B_3$} 
\psfrag{1}{\footnotesize $h_1^n$}
\psfrag{2}{\footnotesize $h_2$}
\psfrag{3}{\footnotesize $h_3$}
\begin{figure}[ht]
\begin{center}
\includegraphics[width=.6\textwidth]{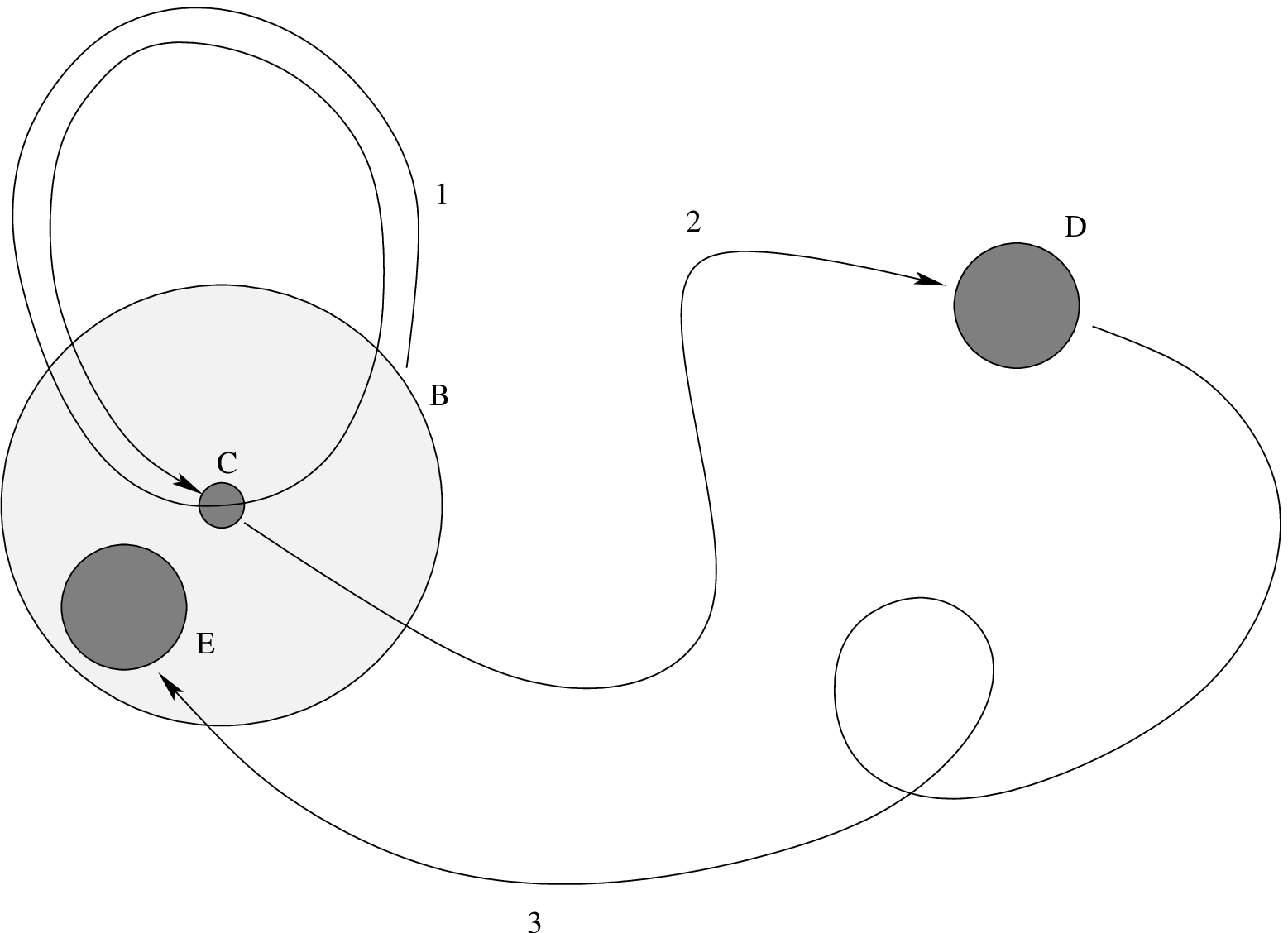}
\end{center}
\caption{A schematic picture of one bootstrapping step. 
The $B_i$'s are balls in the flag manifold $\cF M$. 
The map $h_2$ corresponds to the ``correcting'' phase; its effect is to approximate the exponents to zero.
The map $h_3$ corresponds to the ``tour and go home'' phase; its effect is to scatter the support and close the orbit.}
\label{f.strategy}
\end{figure}

We have sketched how the sequence of periodic orbits is produced.
Of course, the actual construction is more quantitative,
in order to guarantee that the Lyapunov exponents indeed converge to zero,
and that the requirements of Lemma~\ref{l.pretaporter} are indeed fulfilled.

The last and relatively easy step of the proof of Theorem~\ref{t.mainflag}
is to show that there is a nonempty open subset of $(\Diff^2(M))^\ell$
(for sufficiently large $\ell$) where the 
the prerequisites explained above 
(maneuverability and positive minimality on the flag manifold)
are satisfied.
This is done in Section~\ref{s.open}.

In conclusion, the proof of Theorem~\ref{t.mainflag} follows
an strategy very similar to that of \cite{GIKN}.
However, the control of the Lyapunov exponents is much more delicate
because the derivatives do not commute.
Here the flags come to our aid once again:
there is a distinctive feature of flag manifolds that 
permits us to put all the derivatives 
in a standard \emph{triangular} form, and 
therefore neutralize the non-commutativity effects.

\subsection{A $C^1$ construction with positive entropy but smaller support}

To prove Theorem~\ref{t.C1easy}, we use the maneuverability property
to construct an orbit in $\ell^\ZZ \times \cF M$ 
where the Birkhoff sums of the functions
$$
\big( (\omega_n), x, (F_i) \big) \in \ell^\ZZ \times \cF M 
\mapsto 
\log \left| \det \left( Dg_{\omega_0}(x)|_{F_j}\right) \right|
$$
(which are the integrands that appear in \eqref{e.I_j})
are uniformly bounded.
The compact set $\Lambda_G$ is taken as the projection on 
$\ell^\ZZ \times M$  of the closure of this orbit.
By what was seen above, this implies the zero exponents property.

Actually we impose some redundancy on the maneuverability property,
which easily implies positive topological entropy.

\subsection{Organization of the rest of paper}

Section~\ref{s.prelim} contains the preliminary definitions and properties.
In Section~\ref{s.conditions} we state explicit conditions 
(maneuverability and positive minimality on the flag manifold)
on the IFS $G$ 
that guarantee the existence of the measure $\nu$ satisfying the conclusions of Theorem~\ref{t.mainflag}.
In Section~\ref{s.bootstrap} we state Proposition~\ref{p.improve}, which makes precise the input and output of the recursive construction of periodic orbits; then, assuming this proposition, we prove that the bootstrapping procedure yields the desired results. In Sections~\ref{s.tour} and \ref{s.adjust} we prove technical consequences of minimality and maneuverability for later use. In Section~\ref{s.triang} we explain how maps on flag manifolds give rise naturally to triangular matrices, and why this is useful.
Section~\ref{s.proof} uses the material of all previous sections to prove the main Proposition~\ref{p.improve}. In Section~\ref{s.open} we prove that the existence of nonempty open sets of IFS's satisfying the prerequisites of maneuverability and positive minimality on the flag manifold. Finally, in Section~\ref{s.entropy} we prove Theorem~\ref{t.C1easy}.


\section{Preliminaries}\label{s.prelim}

In this section we collect definitions and basic properties 
about iterated function systems (IFS's), flag manifolds and bundles, and the related Lyapunov exponents.
Section~\ref{s.triang} gives deeper extra information
that is needed in the end of the proof.

\subsection{Iterated function systems}\label{ss.prelim_IFS}

If $N$ is a compact metric space and $h_0$, \dots, $h_{\ell-1}$ are homeomorphisms of $N$,
we denote by $\langle H \rangle$ the semigroup generated by $H$, i.e.,
the set of all maps $h_{s_m} \circ \dots \circ h_{s_1}$, where $s_1$, \dots, $s_m \in \{0,\dots,\ell-1\}$.
The concatenation $w = s_1 \dots s_m$ is called a \emph{word} of \emph{length} $m$ on the \emph{alphabet} $\{0,\dots,\ell-1\}$;
we then denote $h_{[w]} = h_{s_m} \circ \dots \circ h_{s_1}$.

An \emph{iterated function system} (or \emph{IFS}) is simply a semigroup $\langle H \rangle$
with a marked set $H$ of generators.

The \emph{$H$-orbit} of $x\in N$ 
is the set of the points $h(x)$ where $h$ runs on $\langle H \rangle$.
We say that $H$ (or $\langle H \rangle$) 
is \emph{positively minimal} 
if for every $x\in N$ the $H$-orbit of $x$ is dense in $M$.\footnote{Lemma~\ref{l.minimality} below
gives a practical criterion for positive minimality of an IFS.}

Let $\sigma$ be the shift transformation on 
the symbolic space $\ell^\ZZ = \{0,\dots,{\ell-1}\}^\ZZ$.
We define the \emph{$1$-step skew-product} 
$$
\phi_H \colon \ell^\ZZ \times N \to \ell^\ZZ \times N
$$ 
over $\sigma$
as $(\omega,y) \mapsto (\sigma(\omega), h_{\omega_0} (y))$, where $\omega_0$ is the zeroth symbol of the sequence $\omega$.

\begin{rema}
Let us mention some relations between positive minimality and the dynamics of the associated skew-product:
\begin{enumerate}
\item If the IFS $\langle H \rangle$ is positively minimal then 
$\phi_H$ is transitive on $\ell^\ZZ \times N$.
\item The IFS $\langle H \rangle$ is positively minimal if and only if 
for every point $z=(\omega,x) \in \ell^\ZZ \times N$, the union of the positive iterates of the 
\emph{local strong unstable manifold} 
$
W_{\mathrm{loc}}^{\mathrm{uu}}(z) := \big\{ (\tilde \omega, x) ; \; \tilde \omega_i = \omega_i \text{ for all } i<0 \big\}
$
under $\phi_H$
are dense in $\ell^\ZZ \times N$.
\end{enumerate}
We will not explicitly use these facts, so we omit the (easy) proofs.
\end{rema}

For further use, we endow the symbolic space $\ell^\ZZ$ with the distance
$$
d \big( (s_n), (t_n) \big) = 2^{-n_0}, \quad \text{where } n_0 = \min \{|n| ; \; s_n \neq t_n \}.
$$
On the product $\ell^\ZZ \times N$ we take the maximum of the distances of the two projections.

If $r$, $k$ are positive integers and 
$s_{-r}, s_{-r+1}, \dots, s_k$ are symbols in $\{0,\dots,\ell-1\}$,
then the \emph{cylinder}
$\llbracket s_{-r} \ldots ; s_0 \ldots s_k \rrbracket$
is the set of $(t_n)_n \in \ell^\ZZ$ such that $t_n = s_n$ for all $n$ with 
$-r \le n \le k$.
Cylinders $\llbracket s_{-r}\ldots ; \rrbracket$ and
$\llbracket ;s_0\ldots s_k \rrbracket$ are defined analogously.

\subsection{Lyapunov exponents}\label{ss.prelim_Lyap}

It is convenient to consider Lyapunov exponents in the general setting of bundle automorphisms.
Details can be found in the book \cite{LArnold}. 

\medskip

Let $X$ be a compact metric space. 
Let $V$ be a vector bundle of rank $d$ over $X$,
and let $\pi \colon V \to X$ be the bundle projection.
We endow $V$ with a Riemannian norm, that is, a continuous choice of an inner product 
on each fiber.

Let $T\colon X\to X$ be a continuous map.
Let $S \colon V \to V$ be a vector bundle morphism over $T$, i.e., 
a continuous map such that $\pi\circ S = T \circ \pi$ which is a linear map on each fiber. 
For $x \in X$ and $n \in \NN$, the restriction of $S^n$ to the fiber $V_x := \pi^{-1}(x)$
gives a linear map $A^{(n)}_S(x) \colon V_x \to V_{T^n x}$.
We write $A_S(x) = A^{(1)}_S(x)$ and so $A^{(n)}_S(x) = A_S(T^{n-1} x) \circ \cdots \circ A_S(x)$.

From now on, let us assume that each linear map $A(x)$ is an isomorphism.

In the case that the vector bundle is trivial
(i.e., $V = X \times E$ where $E$ is a vector space and $\pi$ is the projection on the second factor)
then the morphism $S$ is also called a \emph{linear cocycle},
and the map $A = A_S \colon X \to \GL(E)$ is called the \emph{generator} of the cocycle.

Sometimes, with some abuse of terminology, we also call a vector bundle morphism a \emph{cocycle}.

Let $\mu$ be a $T$-invariant ergodic probability measure on $X$.\footnote{All the measures we consider will be defined over the corresponding Borel $\sigma$-algebra.} 
By Oseledets Theorem, for $\mu$-almost every point $x\in X$ and every vector $v \in V_x \setminus \{0\}$,
the \emph{Lyapunov exponent}
$$
\lim_{n \to +\infty} \frac{1}{n} \log \|A^{(n)}(x) \cdot v \|,
$$
exists; moreover the ordered list of Lyapunov exponents $\lambda_1 \ge \dots \ge \lambda_d$
with repetitions according to multiplicity (i.e., the number of linearly independent vectors with the same exponent)
is almost everywhere independent of $x$.

\begin{exam}\label{ex.derivative_cocycle}
Suppose that $X = M$ is a smooth manifold of dimension $d$,
and $V = TM$ is the tangent bundle of $M$.
Let $T = g$ be a diffeomorphism of $M$, and let $S = Dg$ be the derivative of $g$.
This is sometimes called the ``\emph{derivative cocycle}''.
\end{exam}

Now let $M$ be a compact smooth manifold.
Given a continuous map $x \in X \mapsto g_x \in \Diff^1(M)$,
we consider the skew-product map $\phi$ on $X \times M$ defined by $\phi (x,y) = (T(x),g_x(y))$;
this is called a \emph{nonlinear cocycle}.
If $\nu$ is a probability on $X \times M$ that is $\phi$-invariant and ergodic,
then the \emph{fibered Lyapunov exponents} of the nonlinear cocycle $\phi$ with respect to $\nu$
are the values that can occur as limits
$$
\lim_{n \to +\infty} \frac{1}{n} \log \|D(g_{T^{n-1} x} \circ \cdots \circ g_x)(y) \cdot v \|,
\quad \text{(where $v \in T_y M \setminus \{0\}$)}
$$
for a positive measure subset of points $(x,y)\in X \times M$.
Of course, these are the previously defined Lyapunov exponents
relative to the action of the derivatives on the vector bundle $X \times TM$.

\begin{exam}\label{ex.step_skew_pr} 
Consider the $1$-step skew-product $\phi_H$ on $\ell^\ZZ \times N$ defined in \S~\ref{ss.prelim_IFS}.
If $N$ is a smooth manifold and each generator $h_s$ is a diffeomorphism
then $\phi_H$ can be viewed as a nonlinear cocycle,
and each ergodic invariant measure on $\ell^\ZZ \times N$ gives rise to 
fibered Lyapunov exponents.
\end{exam}

Analogously, we can also consider ``\emph{nonlinear cocycles}'' where the product $X \times M$ is replaced
by a fiber bundle over $X$ with typical fiber $M$ and structure group $\Diff^1(M)$.
For these nonlinear cocycles, we also consider the fibered Lyapunov exponents.

\subsection{Flag manifolds and linearly induced maps}\label{ss.prelim_flag}

Let $E$ be a real vector space of dimension $d$.
A \emph{flag} on $E$ is \label{def.flag}
a sequence $\flag = (F_i)_{i=1,\dots,d}$ 
of subspaces $F_1 \subset F_2 \subset \dots \subset F_d = E$
such that $\dim F_i = i$ for each $i$.
If each subspace $F_i$ is endowed with an orientation then 
we say that $\flag$ is an \emph{oriented flag}.
The set of flags (resp.\ oriented flags) on $E$ will be denoted by $\cF E$ (resp.\ $\ordF E$),
and called the \emph{flag manifold} (resp.\ \emph{oriented flag manifold}) of $E$; \label{def.flag_manifold}
indeed a differentiable structure is defined below.

Any flag (resp.\ oriented flag) $\flag = (F_i)$ on $E$ can be represented  
by a basis $(e_1, \dots, e_d)$ of $E$
such that for each $i$, $(e_1, \dots, e_i)$ is a basis (resp.\ positive basis) for $F_i$.
This representation is not unique.
However, if one fixes an inner product on $E$,
then each \emph{oriented} flag $\flag$ on $E$ has an unique orthonormal base that represents it;
this basis will be denoted by $\cO(\flag)$.\label{def.basis}

Thus one can endow the set $\ordF E$ with a structure of smooth manifold
diffeomorphic to $\Ogroup(d)$, the Lie group of $d \times d$ orthogonal matrices.
(More details are given in \S~\ref{ss.flag_derivative}.)
The disorientating mapping $\ordF E \to \cF E$ is $2^d$-to-$1$ covering map;
its deck transformations are smooth, and
therefore we can also endow $\cF E$ with a differentiable structure.
The manifolds $\cF E$ and $\ordF E$ are compact and have dimension $d(d-1)/2$;
the former is connected, while the latter has $2$ connected components.

If $E$ and $E'$ are vector spaces of the same dimension $d$,
then each linear isomorphism $L \colon E \to E'$ induces a map $\cF L \colon \cF E \to \cF E'$
in the obvious way:
$$
(F_1 \subset \cdots \subset F_d) \in \cF E \ \mapsto \ 
(L(F_1) \subset \cdots \subset L(F_d)) \in \cF E',
$$
By pushing-forward orientations, we define an analogous map $\ordF L \colon \ordF E \to \ordF E'$.
These two maps are actually diffeomorphisms;
more information about them will be given in \S~\ref{ss.flag_derivative}.

Let us fix some additional notation.
Suppose that $E$, $E'$ are vector spaces of the same dimension $d$, endowed with inner products.
If $L \colon E \to E'$ is a linear isomorphism and $\flag \in \ordF E$,
we let\label{def.M}
\begin{equation}\label{e.M}
M(L, \flag) := \text{the matrix of $L$ w.r.t.~the bases $\cO(\flag)$ and $\cO((\cF L)(\flag))$.}
\end{equation}
Notice that this is an upper triangular matrix,
whose diagonal entries $M_{11}$, \dots, $M_{dd}$ are positive and
satisfy the identity
\begin{equation}\label{e.subdeterminant}
M_{11} M_{22} \cdots M_{ii} = \det (L | F_i) \, .
\end{equation}

\begin{exam}\label{ex.QR_decomposition}
Suppose that $E=E'=\RR^d$ is endowed with the Euclidian inner product,
and identify the isomorphism $L$ with a $d \times d$ invertible matrix.
Suppose $\flag_0$ 
is the canonical flag in $\RR^d$ (i.e., that $\cO(\flag)$ is the canonical basis in $\RR^d$).
Consider the \emph{QR decomposition} of $L$, i.e., the unique factorization $L=QR$
where $Q$ is an orthogonal matrix and $R$ is an upper triangular matrix with positive diagonal entries.
(Those matrices are computed using the Gram--Schmidt process.)
Then $\cO((\cF L)(\flag_0))$ is the ordered basis formed by the columns of $Q$, 
and $M(L,\flag_0) = R$.
\end{exam}

If $\flag$ in a non-oriented flag 
then the entries of $M(L, \flag)$ are well-defined up to sign,
and the diagonal entries are well-defined and positive.

\subsection{Flag bundle dynamics}\label{ss.prelim_flag_dyn}

As is \S~\ref{ss.prelim_Lyap},
let $\pi \colon V \to X$ be a vector bundle of rank $d$ over a compact metric space $X$,
endowed with a Riemannian metric,
and let $S \colon V \to V$ be a vector bundle morphism over a continuous map $T \colon X \to X$
that is invertible in each fiber.

Let $\cF V$ be the \emph{flag bundle} associated to $V$,\label{def.flag_bundle}
that is, the fiber bundle over $X$ whose fiber $(\cF V)_x$ 
over $x\in X$ is the flag manifold of $V_x$.
The vector bundle morphism $S$
induces a fiber bundle morphism $\cF S$ of $\cF V$ also over~$T$.
This is summarized by the following diagrams:
$$
\begin{tikzcd}
V \arrow{r}{S} \arrow{d} &V \arrow{d}\\
X \arrow{r}{T}           &X
\end{tikzcd}
\qquad \leadsto \qquad
\begin{tikzcd}
\cF V \arrow{r}{\cF S} \arrow{d} &\cF V \arrow{d}\\
X     \arrow{r}{T}               &X
\end{tikzcd}
$$
Analogously we define the oriented versions $\ordF V$ and $\ordF S$.

\begin{rema}\label{r.oseledets_trick}
The original proof of 
Oseledets Theorem relies on this construction
to reduce the general case to the case of triangular cocycles:
see \cite[p.~228--229]{Oseledets}, also \cite[\S~4]{JPS}, \cite[\S~3.4.2]{BarreiraP}.
\end{rema}

\begin{exam}\label{ex.flag_derivative_cocycle}
Let us come back to the situation of Example~\ref{ex.derivative_cocycle}.
Consider the flag bundle $\cF(TM)$ associated to $TM$;
by simplicity we denote it by $\cF M$ and call it 
the \emph{flag bundle} of the manifold $M$. \label{def.flag_bundle_manifold}
It is a compact manifold of dimension $d(d+1)/2$,
and it is connected if $M$ is.
If $T = g$ is $C^r$ diffeomorphism of $M$, let $S = Dg$ be the derivative of $g$.
We obtain an induced morphism $\cF(Dg)$ of the flag bundle $\cF(TM) = \cF M$,
which by simplicity we denote by $\cF g$. \label{def.flag_derivative}
This morphism is a $C^{r-1}$ diffeomorphism of $\cF M$ 
(and a homeomorphism if $r=1$).
Analogously we define $\ordF M$ and $\ordF g$.
If $M$ is endowed with a Riemannian metric then $\ordF M$
can be naturally identified with the \emph{orthonormal frame bundle}.
The Riemannian metric on $M$ induces Riemannian metrics on $\cF M$ and $\ordF M$,
as explained in \S~\ref{ss.bundle_geometry}.
\end{exam}

Consider an ergodic probability measure $\nu$ for $\cF S$.
(Evidently, such measures always exist.)
We define the \emph{Furstenberg vector}\label{def.furstenberg} 
of $S$ with respect to $\nu$ 
as $\vec{\Lambda}(S,\nu) = (\Lambda_1,\dots,\Lambda_d)$, where
\begin{equation}\label{e.furstenberg}
\Lambda_j := \int_{\cF V} \log M_{i,i} (A_S(x), \flag) \, d\nu (x, \flag) \, .
\end{equation}
(Recall that $A_S(x)$ denotes the restriction of $S$ to the fiber $\pi^{-1}(x)$.)

In view of \eqref{e.subdeterminant}, we have:
\begin{equation}\label{e.sum}
\Lambda_1 + \dots + \Lambda_j = \int_{\cF V} \log \big| \det A_S(x)|_{F_j} \big|\, d\nu (x, \flag), 
\text{ where } \flag = (F_i). 
\end{equation}
Expressions like \eqref{e.furstenberg} and \eqref{e.sum} are called 
\emph{Furstenberg--Khasminskii formulas}; see \cite{LArnold}.

An obvious but important feature of the map $\nu\mapsto \vec{\Lambda}(\nu)$ is that it is continuous
with respect to the weak-star topology.

The next result relates the Furstenberg vector with 
the previously defined Lyapunov exponents:\footnote{The continuous-time version of Proposition~\ref{p.Lyap_permutation}
is sometimes called the \emph{Liao spectrum theorem}; see e.g.~\cite{Dai}.}

\begin{prop}\label{p.Lyap_permutation}  
Suppose $\nu$ is an ergodic probability measure for $\cF S$.
Let $\mu$ be the projection of $\nu$ on $X$ (thus an ergodic probability measure for $T$),
and let $\lambda_1$, \dots, $\lambda_d$ 
be the Lyapunov exponents of $S$ with respect to $\mu$.
Then there is a permutation $(k_1,k_2,\dots,k_d)$ of $(1, \dots, d)$ such that
the Furstenberg vector $(\Lambda_1,\dots,\Lambda_d)$ 
is given by $\Lambda_i = \lambda_{k_i}$.
\end{prop}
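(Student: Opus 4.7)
The plan is to bridge two ergodic theorems---Birkhoff on the flag bundle and Oseledets on the base---by means of the upper-triangular representation $M(L,\flag)$ from \eqref{e.M}. The Furstenberg components $\Lambda_i$ are integrals of continuous functions on $\cF V$, and I would like to identify each of them as a single Lyapunov exponent of $S$.

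Step one: apply Birkhoff's ergodic theorem to the ergodic system $(\cF V, \cF S, \nu)$ with the bounded continuous integrands $f_i(y, \sflag) := \log M_{ii}(A_S(y), \sflag)$, $i = 1, \ldots, d$, whose $\nu$-averages are the $\Lambda_i$. Setting $\flag_k := (\cF S)^k(x, \flag)$, this yields
$$\frac{1}{n} \sum_{k=0}^{n-1} \log M_{ii}\bigl(A_S(T^k x), \flag_k\bigr) \xrightarrow[n\to\infty]{} \Lambda_i$$
for $\nu$-a.e.\ $(x, \flag)$ and every $i$. Step two: I would then exploit multiplicativity of the triangular form. Since $M(A_S(T^k x), \flag_k)$ is the matrix of $A_S(T^k x)$ in the orthonormal bases $\cO(\flag_k)$ and $\cO(\flag_{k+1})$, the composition $M(A_S(T^{n-1}x), \flag_{n-1}) \cdots M(A_S(x), \flag_0)$ represents $A_S^{(n)}(x)$ in $\cO(\flag_0)$ and $\cO(\flag_n)$, is still upper triangular, and has diagonal entries $\prod_k M_{ii}(A_S(T^k x), \flag_k)$. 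Combining with \eqref{e.subdeterminant} converts the Birkhoff convergence into
$$\Lambda_1 + \cdots + \Lambda_i \;=\; \lim_{n\to\infty} \frac{1}{n} \log \bigl|\det(A_S^{(n)}(x)|_{F_i})\bigr|,$$
valid for $\nu$-a.e.\ $(x, \flag = (F_j))$.

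Step three: compare with Oseledets on the base. The projection of $\nu$ on $X$ is the ergodic measure $\mu$, so such $x$ is Oseledets-regular for $S$. A classical refinement of Oseledets' theorem (derivable either by applying it to the exterior power $\wed^i S$, or directly via the Oseledets filtration $V_x = V_{(1)} \supset \cdots \supset V_{(m+1)} = \{0\}$) asserts that for every $i$-dimensional subspace $W \subset V_x$ the determinantal growth rate $\lim \tfrac{1}{n} \log |\det A_S^{(n)}(x)|_W|$ exists and equals $\sum_{k \in K(W)} \lambda_k$ for a subset $K(W) \subset \{1, \ldots, d\}$ of cardinality $i$, defined in terms of the jump sizes of $j \mapsto \dim(W \cap V_{(j)})$; moreover $W \subset W'$ entails $K(W) \subset K(W')$. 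Applied along the flag $F_1 \subset F_2 \subset \cdots \subset F_d = V_x$, this produces nested sets $K_1 \subset K_2 \subset \cdots \subset K_d = \{1, \ldots, d\}$ with $\sum_{k \in K_i} \lambda_k = \Lambda_1 + \cdots + \Lambda_i$. Step four: letting $k_i$ be the unique element of $K_i \setminus K_{i-1}$ produces a permutation $(k_1, \ldots, k_d)$ of $(1, \ldots, d)$ with $\Lambda_i = \lambda_{k_i}$, as required. The only delicate ingredient is the subspace-exponent statement invoked in step three; apart from that, the argument is a purely algebraic reshuffling combined with two applications of an ergodic theorem.
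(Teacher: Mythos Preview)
Your proof is correct and follows essentially the same route as the paper's, only made explicit. The paper's proof is a one-liner: it invokes formula~\eqref{e.sum} to express $\Lambda_1+\cdots+\Lambda_j$ as the integral of $\log|\det A_S(x)|_{F_j}|$ over $\cF V$, and then cites the corresponding result for Grassmannians in Arnold's book \cite[pp.~265, 211]{LArnold}. Your Steps~1--2 amount to a pointwise (Birkhoff) derivation of that same partial-sum formula, and your Steps~3--4---computing the determinantal growth rate on a subspace via the Oseledets filtration and extracting a permutation from the nested flag---are precisely the content of the Grassmannian result the paper defers to. So you have unpacked the citation rather than taken a different path; the combinatorial check that the indices $k_i$ exhaust $\{1,\dots,d\}$ with correct multiplicities (which you handle via the nesting $K(F_1)\subset\cdots\subset K(F_d)$) is the only point requiring a moment's care, and your treatment of it is sound.
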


\begin{proof}
Using \eqref{e.sum}, the proposition follows from corresponding results for grassmannians;
see \cite[p.~265, 211]{LArnold}.
\end{proof}

It follows from Proposition~\ref{p.Lyap_permutation} that
the Furstenberg vector is independent of the choice of the Riemannian metric on the vector bundle $V$.\footnote{Of course, we can also prove this fact directly by showing that when the metric is changed the integrand in \eqref{e.furstenberg} (or \eqref{e.sum}) is replaced by a cohomologous one.}

The next result, which will be proved in \S~\ref{ss.lyapunovflag},
relates the fibered Lyapunov exponents of the nonlinear cocycle $\cF S$
with the Furstenberg vector:

\begin{prop}\label{p.Lyap_in_flag}
Suppose $\nu$ is an ergodic probability measure for $\cF S$,
and let $\vec\Lambda(\nu) = (\Lambda_1,\dots,\Lambda_d)$ 
be its Furstenberg vector.
Then the fibered Lyapunov exponents of $\cF S$ with respect to $\nu$
form the list of numbers 
$$
\Lambda_i - \Lambda_j \, , \quad \text{where $i<j$,}
$$
with repetitions according to multiplicity.
\end{prop}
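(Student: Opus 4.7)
The plan is to realize the fibered derivative cocycle of $\cF S$ as a triangular cocycle in natural coordinates, then apply the standard fact that the Lyapunov exponents of an ergodic triangular cocycle are the ergodic averages of the logarithms of its diagonal entries. It is convenient to lift to the oriented flag bundle: the covering $\ordF V \to \cF V$ is a local diffeomorphism, so the fibered Lyapunov exponents are unchanged, and on $\ordF V$ the canonical orthonormal frame $\cO(\flag)$ provides a trivialization of each vertical tangent space $T_\flag(\ordF V)_x$ by $\so(d)$, namely $X \in \so(d)$ corresponds to $\frac{d}{dt}\big|_{t=0} \exp(tX)\cdot \cO(\flag)$.

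Fix $x \in X$ and $\flag \in (\ordF V)_x$, and set $L = A_S(x)$, $R = M(L,\flag)$, $F = \cO(\flag)$, $F' = \cO((\ordF L)\flag)$, so that $LF = F'R$ with $R$ upper triangular and positive on the diagonal. Differentiating $LF\exp(tX) = F'(t)R(t)$ at $t=0$ and post-multiplying by $R^{-1}$ yields
$$
RXR^{-1} \ = \ X' + \dot{R}R^{-1},
$$
where $X' := (F')^{-1}\dot{F'} \in \so(d)$ represents the image of $X$ under $D(\ordF L)_\flag$, and $\dot R R^{-1}$ is upper triangular. This is the direct-sum decomposition $\mathfrak{gl}(d) = \so(d) \oplus \mathfrak{b}(d)$ into skew-symmetric and upper triangular parts; in particular, $X'_{ij} = (RXR^{-1})_{ij}$ whenever $i > j$. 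Parameterizing $\so(d)$ by its strictly lower-triangular entries $\{X_{ij}\}_{i>j}$ and using that $R,\ R^{-1}$ are both upper triangular with diagonal entries $M_{ii}$ and $1/M_{ii}$ respectively, one obtains
$$
X'_{ij} \ = \ \frac{M_{ii}}{M_{jj}}\, X_{ij} \ + \sum_{\substack{k \geq i,\ l \leq j \\ (k,l) \neq (i,j)}} R_{ik}\, X_{kl}\, (R^{-1})_{lj}.
$$
Ordering the pairs $(i,j)$ lexicographically (first index ascending, second descending) puts this matrix in \emph{upper triangular} form, with diagonal entries $M_{ii}(L,\flag)/M_{jj}(L,\flag)$ indexed by the $d(d-1)/2$ pairs with $i > j$.

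Composing along the $T$-orbit of $x$, the fibered derivative cocycle of $\ordF S$ is therefore an ergodic triangular cocycle with those diagonal entries. The triangular structure yields a cocycle-invariant filtration whose successive quotients are one-dimensional scalar subcocycles, so Oseledets applied to each quotient identifies its Lyapunov exponent as the integral of the log of the corresponding diagonal entry, which by the definition \eqref{e.furstenberg} of the Furstenberg vector is exactly $\Lambda_i - \Lambda_j$. Since these $d(d-1)/2$ pairs exhaust the rank of the vertical subbundle, they account for all the fibered Lyapunov exponents with multiplicity, yielding the asserted list (up to the trivial reindexing $(i,j) \leftrightarrow (j,i)$ of the set of pairs).

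The main obstacle is verifying that the pointwise triangular structures splice into a genuinely triangular cocycle along orbits, since the trivializations depend on $\flag$. This works because the identification $T_\flag (\ordF V)_x \cong \so(d)$ via $\cO(\flag)$ and the partial order $(k,l) \succ (i,j) \iff k \geq i,\ l \leq j$ are intrinsic: the latter reflects the filtration inherited from the flag $\flag$, and the triangular form (hence the diagonal entries) is independent of any further choice, which is why the list of Lyapunov exponents depends only on the Furstenberg vector.
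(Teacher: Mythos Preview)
Your proof is correct and follows essentially the same route as the paper's. The paper packages the key computation---that in the canonical $\so(d)$-trivialization of the vertical tangent spaces the derivative of $\ordF L$ is upper triangular with diagonal entries $M_{ii}/M_{jj}$---as a separate Proposition~\ref{p.flag_derivative}, and then invokes it together with the standard triangular-cocycle Lyapunov exponent formula (Proposition~\ref{p.triangular_cocycle}); you carry out that same computation inline (your $RXR^{-1} = X' + \dot R R^{-1}$ step is exactly the paper's argument), choose a different but equally valid linear extension of the partial order $(k,l)\succeq(i,j)\iff k\ge i,\ l\le j$, and then appeal to the same triangular-cocycle fact. One small point worth making explicit: your passage from $\cF V$ to $\ordF V$ requires not just that the covering is a local diffeomorphism but that the given ergodic $\nu$ lifts to an ergodic $\ordF S$-invariant measure with the same Furstenberg vector---which it does, since the cover is finite---and the paper states this lifting explicitly.
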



\begin{exam}\label{ex.main}
Consider an IFS with set of generators $G = (g_0,\dots,g_{\ell-1}) \in (\Diff^2(M))^\ell$.
We can consider the iterated function system $\langle \cF G \rangle$
generated by $\cF G = (\cF g_0,\dots,\cF g_{\ell-1})$.  
The associated $1$-step skew-product $\cF \phi_{G} := \phi_{\cF G}$
fibers over $\phi_G$, i.e., the diagram \eqref{e.2_skew_products} commutes.
Let $\nu$ be an ergodic invariant probability for $\cF \phi_G$,
and let $\mu$ be its projection on $\ell^\ZZ \times M$.
Let us consider the fibered Lyapunov exponents of these nonlinear cocycles.
Let $\lambda_1 \ge \dots \ge \lambda_d$ be the fibered Lyapunov exponents of $\mu$.
Then there is a permutation $(k_1,k_2,\dots,k_d)$ of $(1, \dots, d)$ such that:
\begin{enumerate}
\item \label{i.thin} 
regarding $\cF \phi_G$ as a nonlinear cocycle over $\phi_G$, the fibered Lyapunov spectrum of $\nu$ is $\{\lambda_{k_i}-\lambda_{k_j} ; \; i< j\}$;
\item \label{i.fat}
regarding $\cF \phi_G$ as a nonlinear cocycle over $\sigma$, the fibered Lyapunov spectrum of $\nu$ is $\{\lambda_k\} \cup \{\lambda_{k_i}-\lambda_{k_j} ; \; i< j\}$.
\end{enumerate}
In particular, we see that Theorem~\ref{t.mainflag} implies Theorem~\ref{t.main}.
\end{exam}

\section{Sufficient conditions for zero exponents}\label{s.conditions}

In this section we state explicit conditions on an IFS $G = (g_0,\dots,g_{\ell-1}) \in (\Diff^1(M))^\ell$
that are sufficient for the existence of fully supported ergodic measures with zero exponents
as those in Theorem~\ref{t.mainflag}.

The first condition is that the IFS $\cF G$ of induced homeomorphisms of the flag manifold $\cF M$
is positively minimal on $\cF M$. 
For conciseness, we say that the IFS $G$ is \emph{positively minimal on the flag manifold}.

The next condition is this.
We say that a finite set $G \subset \Diff^1(M)$ has the {\emph{maneuverability} property if 
for every $(x,\flag)\in\cF M$ 
and for every sequence of signs $t = (t_1,\dots,t_d)\in\{-1,+1\}^d$, $d=\dim M$,
there is $g\in G$ such that 
$$
t_i \, \log M_{i,i}(Dg(x),\flag) > 0 \quad \text{for each $i$,}
$$
where $M_{i,i}(Dg(x),\flag)$ is the $i^{\text{th}}$ entry on the diagonal of the matrix $M(Dg(x),\flag)$
(defined in \S~\ref{ss.prelim_flag}).

Now we can state the following result:

\begin{othe}\label{t.conditions}
Consider a finite subset $G=\{g_0,\dots,g_{\ell-1}\}$ of $\Diff^2(M)$ 
with the following properties:
\begin{enumerate}
\item\label{i.cond_1} Positive minimality on the flag manifold.
\item\label{i.cond_2} Maneuverability.
\item\label{i.cond_3}
There is a map $g\in \langle G \rangle$ with a fixed point $x_0 \in M$
such that the eigenvalues of $Dg(x_0)$ are all negative, simple, and of different moduli.
\end{enumerate}
Then the skew-product map $\cF \phi_G$ 
possesses an ergodic invariant measure $\nu$ with support $\ell^\ZZ \times \cF M$
whose fibered Lyapunov exponents are all zero.
Moreover, the measure $\nu$ is the weak-star limit of a sequence of 
$\cF \phi_G$-invariant measures $\nu_n$, each of these supported on a periodic orbit.  
\end{othe}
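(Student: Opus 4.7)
My plan is to follow the bootstrapping strategy of Section~\ref{s.ideas}: I would construct a sequence $(\cO_n)$ of periodic orbits of $\cF\phi_G$ whose invariant measures $\nu_n$ satisfy the shadowing hypotheses of Lemma~\ref{l.pretaporter}, whose unions exhaust $\ell^\ZZ\times\cF M$, and whose Furstenberg vectors $\vec\Lambda(\nu_n)$ tend to zero. Since $\vec\Lambda$ is weak-star continuous and Proposition~\ref{p.Lyap_in_flag} expresses the fibered Lyapunov exponents of $\cF\phi_G$ as the differences $\Lambda_i-\Lambda_j$, the ergodic limit $\nu$ produced by Lemma~\ref{l.pretaporter} will have full support and only zero fibered exponents.

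The starting orbit $\cO_1$ comes from hypothesis~(\ref{i.cond_3}): writing $g=g_{[w]}$ and using that the eigenvalues of $Dg(x_0)$ are simple and of distinct moduli (their negativity letting me lift to the oriented flag with any prescribed sign pattern), the discussion in \S\ref{ss.boot} gives a unique attracting fixed flag $(x_0,\flag_0)\in\cF M$ for $\cF g$, hence a periodic point of $\cF\phi_G$ with itinerary $\overline{w}$ whose fibered exponents are all negative, distinct, and (after a further iterate if needed) as small as desired.

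The inductive step, to be formalized later as Proposition~\ref{p.improve}, takes $\cO_n$ with negative, distinct, small Furstenberg coordinates $(\Lambda^{(n)}_i)$ together with prescribed targets $\epsilon_{n+1},\kappa_{n+1}$, and produces $\cO_{n+1}$ as the fixed point of the composition associated to an itinerary word $(w^n)(s_1\cdots s_m)(t_1\cdots t_k)$ built in three blocks:
\begin{itemize}
\item \emph{Contraction:} a large power $n$ of the base word $w$, which carries a small ball $B_0$ around $(x_0,\flag_0)$ into a much smaller ball $B_1$.
\item \emph{Correction:} a word $s_1\cdots s_m$ produced by iterating the maneuverability hypothesis. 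At each step I pick a generator whose derivative, in the triangular representation of \S\ref{s.triang} with respect to the current flag, has diagonal logarithms with signs opposite to those of $(\Lambda^{(n)}_i)$, so that the Birkhoff sum of $\log M_{ii}$ along the new orbit pushes every Furstenberg coordinate toward zero, while $B_1$ is blown up only to a controlled $B_2\subset B_0$.
\item \emph{Tour-and-go-home:} a word $t_1\cdots t_k$ supplied by positive minimality of $\cF G$ on $\cF M$ (plus a short symbolic excursion in $\ell^\ZZ$), whose intermediate images of $B_2$ visit an $\epsilon_{n+1}$-dense subset of $\ell^\ZZ\times\cF M$ and whose final image sits inside $B_0$.
\end{itemize}
The resulting composite is a uniform contraction of $B_0$ (this is where $C^2$ regularity enters, through distortion control of $\cF g$ on $\cF M$), so it has a unique attracting fixed point, defining $\cO_{n+1}$. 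Taking $n$ and $m$ much larger than $k$ makes the tour phase's contribution to the Furstenberg vector negligible and the new vector closer to zero than the old one; a suitable calibration of $k,n,m$ produces the shadowing bounds $\epsilon_{n+1}$ and $\kappa_{n+1}$.

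The hard part is the quantitative coupling of the three requirements in the correction and tour phases: moving every Furstenberg coordinate by the prescribed amount uniformly along the shadowing orbit; keeping the new orbit attracting so that Proposition~\ref{p.Lyap_in_flag} together with bounded distortion identify its Lyapunov exponents with its Furstenberg vector; and simultaneously achieving $\sum\epsilon_n<\infty$ and $\prod(1-\kappa_n)>0$. The triangular representation developed in \S\ref{s.triang} is what makes this tractable, because it decouples the dynamics along the nested flag and allows maneuverability to act componentwise. Once Proposition~\ref{p.improve} is established and $(\epsilon_n,\kappa_n)$ is chosen summably, Lemma~\ref{l.pretaporter} produces the ergodic measure $\nu$; density of the tour steps then gives $\supp\nu=\ell^\ZZ\times\cF M$, and continuity of $\vec\Lambda$ delivers the vanishing of all fibered exponents.
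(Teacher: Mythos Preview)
Your outline matches the paper's strategy closely, but there is one genuine gap in how you couple the shadowing proportions $\kappa_n$ to the decay of the Furstenberg vectors. You treat $\kappa_{n+1}$ as a \emph{prescribed input} to the inductive step, to be chosen summably at the end. This does not work: the proportion of time spent in the correction phase is precisely what drives the Furstenberg coordinates toward zero, so if you force $\sum\kappa_n<\infty$ a priori, the product $\prod(1-a\kappa_n)$ stays bounded away from zero and the exponents never reach zero. In the paper's formulation of Proposition~\ref{p.improve}, $\kappa$ is an \emph{output} satisfying only $\kappa<\min(1,\|\vec\lambda(z)\|)$; the summability of $(\kappa_n)$ is then obtained a posteriori by showing that $\|\vec\lambda(z_n)\|$ decays geometrically. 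That geometric decay is not automatic either: the contraction factor is a continuous \emph{projective} function $\tau\colon\cC\to(0,1)$, so to bound it uniformly away from $1$ one must keep the direction of $\vec\lambda(z_n)$ inside a fixed compact subcone. The paper achieves this with an additional angle condition $\angle(\vec\lambda(z_{n+1}),\vec\lambda(z_n))<\theta_n$ with $\sum\theta_n<\Theta$, which is an extra output of Proposition~\ref{p.improve} that your sketch omits.

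A minor correction: replacing $g$ by an iterate does \emph{not} make the Lyapunov exponents of the starting orbit smaller (exponents are averages, invariant under iteration). Fortunately this is unnecessary; hypothesis~(\ref{i.cond_3}) only has to supply some $z_0$ with $\vec\lambda(z_0)\in\cC$, and the bootstrapping itself drives the exponents to zero.
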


\begin{rema}\label{r.C2}
Notice that the hypotheses of the theorem are meaningful for $C^1$ IFS's.
However, our proof requires $C^2$ regularity.
We do not know if the $C^1$ result is true.
\end{rema}

\begin{rema}
It is possible (by adapting arguments from Section~\ref{s.proof}) 
to show that Conditions~(\ref{i.cond_1}) and (\ref{i.cond_2}) actually imply Condition~(\ref{i.cond_3}),
and therefore the latter could be removed from the statement of Theorem~\ref{t.conditions}.
As our ultimate goal is to show the existence of the robust examples from Theorems~\ref{t.main} 
and \ref{t.mainflag}, we chose to sacrifice generality in favor of briefness.
\end{rema}

The proof of Theorem~\ref{t.conditions} will take Sections~\ref{s.bootstrap}--\ref{s.proof}.
In Section~\ref{s.open} we will prove that there exist nonempty $C^2$-open sets of
IFS's satisfying the hypotheses of the theorem (provided the number $\ell$ of generators 
is large enough, depending on the manifold $M$).
Since the measure $\nu$ produced by Theorem~\ref{t.conditions} 
satisfies precisely the conclusions of Theorem~\ref{t.mainflag},
the latter follows.
As we have seen in \S~\ref{ss.prelim_flag_dyn},
Theorem~\ref{t.mainflag} implies Theorem~\ref{t.main}.

\section{The bootstrapping procedure} \label{s.bootstrap}

As explained in the Introduction,
the measure $\nu$ in Theorem~\ref{t.conditions} will 
be obtained as the limit of a sequence of measures supported on periodic orbits,
and this sequence is constructed recursively by a ``bootstrapping procedure''.
We state below Proposition~\ref{p.improve}, which gives the recursive step of the procedure.
Then we explain how Theorem~\ref{t.conditions} follows from that proposition and Lemma~\ref{l.pretaporter}.
The proof of the proposition is given in Section~\ref{s.proof}.

\medskip

To begin, we need a few definitions.

The euclidian angle between two nonzero vectors $u$, $v \in \RR^d$ is denoted by $\angle (u,v)$.
Consider the open cone of  $\cC \subset \RR^d$ consisting of
the vectors
$$
\cC = \big\{\vec{\lambda}=(\la_1,\dots,\la_d) \in \RR^d ; \; 0>\la_1>\cdots>\la_d \big\}.
$$
We call a function $\tau \colon \cC \to \RR$ \emph{projective} if 
$\tau (t \vec{\lambda}) = \tau (\vec{\lambda})$ for all $\vec{\lambda}\in \cC$ and $t>0$.

Suppose $z = (\omega, y)$ is a periodic point of $\phi_G$ of period $p$.
Then $\omega = w^\infty$, where $w$ is the finite word $\omega_0\dots\omega_{p-1}$,
and $y\in M$ is a fixed point of $g_{[w]}$.
Let us denote 
$$
\vec{\lambda}(z) := (\la_1(z),\dots,\la_d(z)), 
$$
where 
$\la_1(z)\ge \cdots \ge \la_d(z)$ are the Lyapunov
exponents of the skew-product map $\phi_G$
with respect to the invariant measure supported on the periodic
orbit  of $z$.
If these exponents
are all different 
we define the 
\emph{stable flag} \label{def.stable_flag}
of $z$ by  $\sflag(z): =(S_1(z) \subset \cdots  \subset S_d(z))$ 
where
$$
S_i(z)=E_1(z)\oplus \cdots \oplus E_i (z)
$$
and $E_i(z)$ is the eigenspace of $Dg_{[w]}(y)$ associated to $\lambda_i(z)$.
Then $\sflag(z)$ is a hyperbolic attracting fixed point of $\cF g_{[w]}$.
If in addition all exponents $\la_i(z)$ are negative (i.e., $\vec\lambda(z) \in \cC$) then 
$(z,\sflag(z))$ is a hyperbolic attracting periodic point of $\cF \phi_G$.
(Formal proofs of these assertions will be given in Section~\ref{s.triang}.)

We assume a Riemannian metric was fixed on the manifold $M$.
This induces a Riemannian metric on the flag manifold $\cF M$, as we will see in  Section~\ref{s.triang}.

\begin{prop}[Improving a periodic orbit]\label{p.improve}
Consider a finite set $G=\{g_0,\dots , g_{\ell-1}\}$ of $\Diff^2(M)$ satisfying the hypotheses
of Theorem~\ref{t.conditions}. 
Then there exists a projective continuous function $\tau \colon \cC \to (0,1)$
such that the following holds:

Given  numbers $\theta$, $\epsilon$, $\delta>0$
and a periodic point $z$ of $\phi_G$ 
with $\vec{\lambda}(z)\in \cC$,
there exists another periodic point $\tilde z$ of larger period
with the following properties:
\begin{enumerate}
\item \label{i.p.1}
The vector $\vec{\lambda}(\tilde z)$ belongs to $\cC$
and satisfies
\begin{equation}\label{e.angle_and_norm}
0 < \|\vec{\lambda}(\tilde z)\| < \tau(\vec{\lambda}(z))\,  \|\vec{\lambda}(z)\| 
\quad \text{and} \quad
\angle (\vec{\lambda}(\tilde z),\vec{\lambda}(z))<\theta \, .
\end{equation}
\item \label{i.p.2}
There is a positive number $\kappa < \min( 1, \| \vec{\lambda}(z) \|)$
such that the orbit of $(\tilde z, \sflag (\tilde z))$ under $\cF \phi_G$
$\epsilon$-shadows the orbit of $(z,\sflag(z))$
during a proportion $1-\kappa$ of the time.
\item \label{i.p.3}
The orbit of $(\tilde z, \sflag (\tilde z))$ under $\cF \phi_G$ is 
$\delta$-dense in $\ell^\ZZ \times \cF M$.
\end{enumerate}
\end{prop}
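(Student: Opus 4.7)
The plan is to carry out the three-phase construction sketched in \S~\ref{ss.boot}: a long iteration of the given orbit, a maneuverability-based correction that pulls the Lyapunov exponents towards zero in a controlled direction, and a tour-and-go-home phase that scatters the new orbit through $\ell^\ZZ\times\cF M$ while closing it. Write $p$ for the period of $z$, set $w:=\omega_0\cdots\omega_{p-1}$, $h_1:=\cF g_{[w]}$, and $\xi:=(z,\sflag(z))$; by hypothesis $\xi$ is a hyperbolic attracting fixed point of $h_1$. Fix once and for all a contraction ratio $c\in(0,1)$ depending continuously and projectively on $\vec{\lambda}(z)$ (so that $\tau(\vec{\lambda}(z))$ can be taken as, say, $(1+c)/2$), and a small closed ball $B_0\subset\cF M$ around $\sflag(z)$ inside the basin of $h_1$. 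The new periodic point will be obtained as the attracting fixed point of
$$
H:=\cF g_{[v]}\circ\cF g_{[u]}\circ h_1^n,\qquad u=s_1\cdots s_m,\ v=t_1\cdots t_k,
$$
whose ingredients are chosen in the hierarchy $n\gg m\gg k$, with $k=k(\delta)$.

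In phase one, $n$ iterations of $h_1$ squeeze $B_0$ into a tiny ball $B_1\subset B_0$. In phase two, maneuverability gives, at each step along the orbit being built, a generator whose action on the diagonal of the triangular matrix $M(Dg(\cdot),\cdot)$ of Section~\ref{s.triang} has any prescribed sign pattern; a compactness argument on $\cF M$ provides a uniform lower bound on $|\log M_{ii}|$, so $m$ carefully chosen symbols accumulate any desired log-diagonal displacement vector of norm $\Theta(m)$. We design $u$ so that the accumulated vector is close to $-(1-c)\,np\,\vec{\lambda}(z)$, while keeping $m$ small enough compared to $n$ that $B_2:=\cF g_{[u]}(B_1)$ still lies in $B_0$. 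In phase three, positive minimality of $\cF G$ on $\cF M$ supplies a word $v$ of length $k(\delta)$ whose partial compositions spread $B_2$ $\delta$-densely across $\cF M$ and whose total image lies in $B_1$; prefixing $v$ with symbols running through all cylinders of length $\lceil-\log_2\delta\rceil$ lifts this to $\delta$-density in $\ell^\ZZ\times\cF M$. For $n$ large, $H$ is then a contraction sending $B_0$ into itself, yielding an attracting fixed point $\tilde\xi$ that projects to the desired periodic point $\tilde z$ of period $\tilde p=np+m+k$.

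It remains to check the three properties. By the triangular formalism of Section~\ref{s.triang} together with \eqref{e.subdeterminant}, the Furstenberg vector of the invariant measure on the orbit of $(\tilde z,\sflag(\tilde z))$ equals $\tilde p^{-1}$ times the sum of log-diagonal contributions along the orbit of $\tilde\xi$. Since $C^2$ regularity makes $\log M_{ii}(Dg(\cdot),\cdot)$ Lipschitz on $\cF M$, and the orbit of $\tilde\xi$ stays within distance $\operatorname{diam}(B_0)$ of the orbit of $\xi$ throughout phase one, phase one contributes $np\,\vec{\lambda}(z)+O(np\cdot\operatorname{diam}(B_0))$ to the sum; phase two contributes the designed $-(1-c)\,np\,\vec{\lambda}(z)+O(1)$ and phase three contributes $O(k)$. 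Dividing by $\tilde p\sim np/c$ and shrinking $B_0$ independently of $n$ yields $\vec{\lambda}(\tilde z)=c\,\vec{\lambda}(z)+o(1)$ as $n\to\infty$ (with Proposition~\ref{p.Lyap_permutation} identifying Furstenberg and Lyapunov data), which keeps $\vec{\lambda}(\tilde z)$ inside $\cC$ and gives \eqref{e.angle_and_norm}. Property~(b) holds with $\kappa=(m+k)/\tilde p$, which tends to $0$ with $n$ and can be forced below $\min(1,\|\vec{\lambda}(z)\|)$; property~(c) was built into phase three. The main obstacle is controlling not merely the magnitude but also the direction of $\vec{\lambda}(\tilde z)$ in~(a), since maneuverability is a pointwise condition on diagonal entries while Lyapunov exponents depend on a non-commutative product of derivatives; the flag formalism (Section~\ref{s.triang}) resolves this by reducing exponents to additive log-diagonal sums, and $C^2$ regularity supplies the orbit-averaging error control that drives the $o(1)$ to zero as $n\to\infty$.
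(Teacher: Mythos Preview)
Your outline follows the paper's three-phase strategy, but there is an internal inconsistency in your quantitative choices that makes the argument, as written, incorrect.

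You declare the hierarchy $n\gg m\gg k$ and later assert that $\kappa=(m+k)/\tilde p$ ``tends to $0$ with $n$''. But you also design the correcting word $u$ so that its accumulated log-diagonal vector is close to $-(1-c)\,np\,\vec\lambda(z)$. Since each symbol of $u$ contributes a bounded amount to this vector (at most the maneuverability constant per coordinate), achieving that target forces $m$ to be \emph{proportional} to $n$, not $o(n)$. Hence $(m+k)/\tilde p$ converges to a fixed positive number as $n\to\infty$, and the hierarchy $n\gg m$ is incompatible with your displacement target. (The related claim $\tilde p\sim np/c$ does not follow from anything in your setup either.)

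This is exactly the tension the paper has to resolve, and it is where the projective function $\tau$ actually comes from. The correcting-phase proportion $\kappa_0:=qm/(pn+qm)$ is a genuine parameter, but it is bounded above by the ball condition $B_2\subset B_0$: during phase two $\cF g_{[u]}$ expands on $\cF M$ at rate roughly $\chi_d$ per step (look at the diagonal entries in \eqref{e.big_diagonal}), while $h_1$ contracts at rate $\gamma(\vec\lambda)$, the least gap in $0>\lambda_1>\cdots>\lambda_d$. One is therefore forced to take $\kappa_0$ of order $\gamma/C$ (the paper sets $\kappa_0=\gamma/(2C)$), and then the new exponents are $\tilde\lambda_i\approx(1-\kappa_0)\lambda_i+\kappa_0\chi_i$ with $\chi_i=c|\lambda_i|/|\lambda_d|$ collinear with $-\vec\lambda$; this yields the explicit $\tau$ in \eqref{e.tau}. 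The bound $\kappa<\|\vec\lambda(z)\|$ in~(\ref{i.p.2}) then follows not from $n\to\infty$ but from the elementary inequality $\gamma(\vec\lambda)<|\lambda_d|\le\|\vec\lambda\|$. You also need Lemma~\ref{l.off-diagonal} to pass from controlled diagonals to controlled \emph{norms} of $D(\cF g_{[u]})$, without which the containment $B_2\subset B_0$ does not follow from your diagonal estimates alone. Once these quantitative choices are made compatible, the rest of your sketch is essentially the paper's argument.
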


We remark that the proposition is a multidimensional version of Lemma~3 from \cite{GIKN}.

Next we explain how this proposition allows us to recursively construct 
the desired sequence of periodic measures whose limit is the measure sought after by Theorem~\ref{t.conditions}.
The other ingredients are 
Propositions~\ref{p.Lyap_permutation} and \ref{p.Lyap_in_flag}, which allow us to pass the Lyapunov exponents to the limit,
and Lemma~\ref{l.pretaporter}, which gives the ergodicity and full support.

\begin{proof}[Proof of Theorem~\ref{t.conditions}]
By assumption (\ref{i.cond_3}), there is a periodic point $z_0 \in \ell^\ZZ \times \cF M$ of $\phi_G$
such that $\vec{\lambda}(z_0) \in \cC$.
Fix a constant $\Theta>0$ such that 
the close $\Theta$-cone around $\vec{\lambda}(z_0)$, that is
$$
\big\{ \vec{u}\in \RR^d\setminus \{ 0\}  ; \; \angle (\vec{u}, \vec{\lambda}(z_0)) \le \Theta \big\},
$$
is contained in $\cC$.
Let $\tau \colon \cC \to (0,1)$ be the continuous projective 
function produced by Proposition~\ref{p.improve},
and let $\tau_0$ be its infimum on the $\Theta$-cone around $\vec{\lambda}(z_0)$.
Then $0<\tau_0<1$.
Fix sequences $(\theta_n)$, $(\epsilon_n)$ and $(\delta_n)$
of strictly positive numbers 
such that
$$
\sum_{n=0}^\infty \theta_n   < \Theta, \qquad
\sum_{n=0}^\infty \epsilon_n < \infty, \qquad
\lim_{n\to \infty} \delta_n  = 0.
$$

We will define inductively a sequence $(z_n)$ of periodic points of $\phi_G$.
Assume that $z_n$ is already defined.
Then we apply Proposition~\ref{p.improve} 
using the numbers $\epsilon_n$, $\theta_n$ and $\delta_n$
to find another periodic point $z_{n+1}$ with the following properties:
\begin{enumerate}
\item \label{i.aa}
The vector $\vec{\lambda}(z_{n+1})$ belongs to $\cC$
and satisfies
$$
0 < \|\vec{\lambda}(z_{n+1})\|< 
\tau(\vec{\lambda}(z_n)) \, \|\vec{\lambda}(z_n)\|
\quad \text{and} \quad
\angle (\vec{\lambda}(z_{n+1}),\vec{\lambda}(z_n)) < \theta_n \, .
$$
\item
There is a positive number $\kappa_n < \min( 1, \| \vec{\lambda}(z_n) \|)$
such that the orbit of $(z_{n+1}, \sflag (z_{n+1}))$ under $\cF \phi_G$
$\epsilon_n$-shadows the orbit of $(z_n,\sflag(z_n))$
during a proportion $1-\kappa_n$ of the time.
\item 
The orbit of $(z_{n+1}, \sflag (z_{n+1}))$ under $\cF \phi_G$ is 
$\delta_n$-dense in $\ell^\ZZ \times \cF M$.
\end{enumerate}
This recursively defines the sequence $(z_n)$.

Let $\nu_n$ be the $\cF \phi_G$-invariant measure
supported on the orbit of $(z_{n},\sflag(z_{n}))$.
To complete the proof, we will show that the sequence $(\nu_n)$
converges in the weak-star topology to a measure $\nu$ with the desired properties.

Observe that 
$$\angle(\vec{\lambda}(z_n),\vec{\lambda}(z_0)) < \theta_0 + \cdots + \theta_{n-1} < \Theta.$$
In particular $\tau(\vec{\lambda}(z_n)) \ge \tau_0$ for every $n$, and therefore, by (\ref{i.aa}),
$$
\| \vec{\lambda}(z_n) \| \le \tau_0^n \| \vec{\lambda}(z_0) \|
\quad \text{and} \quad
\kappa_n < \min (1, \tau_0^n \| \vec{\lambda}(z_0) \| ) \, .
$$
The latter implies that $\prod (1-\kappa_n) >0$.
Therefore all hypotheses of Lemma~\ref{l.pretaporter}
are satisfied, and we conclude that the measure $\nu = \lim \nu_n$ exists,
is ergodic, and has support 
$$
\supp \nu=
\bigcap_{n=0}^\infty \overline{\bigcup_{m=n}^{\infty} \supp \nu_n} \,  .
$$
Since each $\supp \nu_n$ is $\delta_n$-dense, and $\delta_n \to 0$,
it follows that $\nu$ has full support.

Since
$\vec{\lambda}(z_n) \to \vec{0}$,
it follows from Proposition~\ref{p.Lyap_permutation} 
that the the sequence of Furstenberg vectors $\vec\Lambda(\nu_n)$
also converges to zero.
Since the Furstenberg vector is continuous with respect to the weak-star topology,
we have that $\vec\Lambda(\nu) = \vec{0}$.
This implies that the fibered Lyapunov exponents of $\nu$ are zero (recall Example~\ref{ex.main}),
concluding the proof of Theorem~\ref{t.conditions}.
\end{proof}

\section{Exploiting positive minimality} \label{s.tour}

The aim of this section is to prove Lemma~\ref{l.group_tour_and_go_home},
a simple but slightly technical consequence of positive minimality,
which will be used in Section~\ref{s.proof} in the proof of Proposition~\ref{p.improve}.

We begin with the following lemma:

\begin{lemm}[Go home]\label{l.go_home}
Let $H=\{h_0, \dots, h_{\ell-1}\}$ be a positively minimal set of 
homeomorphisms of a compact metric space $N$.
For every nonempty open set $U \subset N$ there exists $k_0 = k_0(U) \in \NN^*$ such that
for every $x \in N$ there exists a word $w$ of length at most $k_0$ on the alphabet $\{0,\dots,\ell-1\}$ 
such that $h_{[w]}(x) \in U$. 
\end{lemm}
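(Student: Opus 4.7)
The plan is a standard open-cover compactness argument on top of pointwise positive minimality.

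First, I would fix an arbitrary $x \in N$. By positive minimality, the $H$-orbit of $x$ is dense in $N$, so in particular it meets the open set $U$. Hence there exists a word $w_x$ on the alphabet $\{0,\dots,\ell-1\}$ with $h_{[w_x]}(x) \in U$. (If one insists on a nonempty word even when $x \in U$, one can first replace $x$ by $h_0(x)$ and use density of its orbit; this costs at most one extra letter and I will absorb it in the final constant.)

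Next, since $h_{[w_x]}$ is a finite composition of homeomorphisms, it is continuous, so the preimage $h_{[w_x]}^{-1}(U)$ is open and contains $x$. Pick an open neighborhood $V_x \subset h_{[w_x]}^{-1}(U)$ of $x$; then the \emph{same} word $w_x$ sends every point of $V_x$ into $U$. Doing this for each $x \in N$ yields an open cover $\{V_x\}_{x \in N}$ of $N$.

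Now I invoke compactness of $N$ to extract a finite subcover $V_{x_1}, \dots, V_{x_m}$, and set
\[
k_0 \eqdef \max_{1 \le j \le m} \operatorname{length}(w_{x_j}).
\]
Given any $y \in N$, choose $j$ with $y \in V_{x_j}$; then $h_{[w_{x_j}]}(y) \in U$ and $\operatorname{length}(w_{x_j}) \le k_0$, which is exactly the conclusion.

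No part of this argument is really an obstacle: the only mild subtlety is the trivial case $x \in U$ (length-zero word), which is handled as indicated by pre-composing with a single generator. The key conceptual point is simply that positive minimality is a \emph{pointwise} hypothesis, and compactness of $N$ together with continuity of the generators upgrades it to a \emph{uniform} bound on the required word length.
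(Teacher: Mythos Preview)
Your argument is correct and is essentially identical to the paper's own proof: both use positive minimality to get a word $w_x$ sending $x$ into $U$, continuity to get a neighborhood $V_x$ on which the same word works, and compactness to extract a finite subcover and take $k_0$ as the maximum word length. The only difference is your parenthetical about the empty-word case, which the paper simply does not bother to discuss.
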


\begin{proof}
Fix the set $U$.
By positive minimality, for every $x \in N$ there is a word $w(x)$ on the alphabet $\{0,\dots,\ell-1\}$
such that $h_{[w(x)]}(x) \in U$.
By continuity, there is a neighborhood $V(x)$ of $x$ such that 
$h_{[w(x)]}(V(x)) \subset U$.
By compactness, we can cover $N$ by finitely many sets $V(x_i)$.
Let $k_0$ be the maximum of the lengths of the words $w(x_i)$.
\end{proof}

For the next lemma, recall from \S~\ref{ss.prelim_IFS}
the distance on $\ell^\ZZ \times N$ and the cylinder notation.
Let us also use the following notation for segments of orbits:
$f^{[0,k]}(x) := \{x, f(x), f^2(x), \dots, f^k(x)\}$.

\begin{lemm}[Tour and go home]\label{l.tour_and_go_home}
Let $H=\{h_0, \dots, h_{\ell-1}\}$ be a positively minimal set of 
homeomorphisms of a compact metric space $N$.
For every $\delta>0$ and every nonempty open set $U \subset N$,
there exists $k_1 = k_1(\delta, U) \in \NN^*$ such that
for every $x \in N$ there exists a word $w = s_0 s_1 \dots s_{k-1}$ of length $k \le k_1$ 
on the alphabet $\{0,\dots,\ell-1\}$ 
such that:

\begin{enumerate}
\item\label{i.tour_1}
for every $\omega \in \llbracket ; s_0 s_1 \dots s_{k-1} \rrbracket$,
the segment of orbit $\phi_H^{[0,k]} (\omega,x)$ 
is $\delta$-dense in $\ell^\ZZ \times N$;

\item\label{i.tour_2}
$h_{[w]}(x) \in U$.
\end{enumerate}
\end{lemm}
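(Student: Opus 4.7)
The plan is to build $w$ as a concatenation
\[
w = v_1 b^{(1)} v_2 b^{(2)} \cdots v_L b^{(L)} v_{L+1}
\]
of alternating \emph{navigation blocks} $v_m$ and \emph{pattern blocks} $b^{(m)}$, ending with one further navigation block $v_{L+1}$. Each navigation block, produced by Lemma~\ref{l.go_home}, will steer the $N$-coordinate of the orbit close to a prescribed target; each pattern block will force a prescribed finite word to appear at a known position of $\omega$; and the final navigation block will send the endpoint into $U$, providing condition~(\ref{i.tour_2}). It then remains to choose the targets so that the symbolic and $N$-density requirements of condition~(\ref{i.tour_1}) are met simultaneously.

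First I would fix $r \in \NN^*$ with $2^{-r}<\delta$ and a finite $(\delta/3)$-dense set $\{y_1,\dots,y_K\} \subset N$. The metric on $\ell^{\ZZ} \times N$ forces two points to lie at distance less than $\delta$ as soon as their $N$-parts are within $\delta$ and their symbolic parts agree on the window $[-r,r]$. Consequently, condition~(\ref{i.tour_1}) reduces to guaranteeing, for every pair $(b,j)$ with $b \in \{0,\dots,\ell-1\}^{2r+1}$ and $j \in \{1,\dots,K\}$, that some position $i \in [r, k-1-r]$ in the word $w$ satisfies $w_{i-r}\cdots w_{i+r} = b$ and $h_{w_{i-1}} \circ \cdots \circ h_{w_0}(x)$ lies within $\delta$ of $y_j$. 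Since there are only finitely many such pairs, realizing all of them will yield a uniform length bound $k_1$ independent of $x$.

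To realize each pair $(b,j)$, let $b'$ denote the prefix of $b$ of length $r$, so that $h_{[b']}$ advances the orbit from the start to the middle of a pattern block. Using the uniform continuity of the finitely many compositions $h_{[c]}$ with $|c|\le r$, choose $\eta \in (0, \delta/3)$ small enough that $\diam h_{[c]}(B_\eta(y)) < \delta/3$ for every such $c$ and every $y \in N$, and define the nonempty open set $U_{b,j} := h_{[b']}^{-1}(B_\eta(y_j))$. Lemma~\ref{l.go_home} then yields a constant $k_0(U_{b,j})$ with the stated property. Enumerate the pairs as $(b^{(1)},j^{(1)}), \dots, (b^{(L)},j^{(L)})$, set $x_0 := x$, and at each stage $m = 1, \dots, L$ apply Lemma~\ref{l.go_home} to pick $v_m$ of length $\le k_0(U_{b^{(m)},j^{(m)}})$ with $h_{[v_m]}(x_{m-1}) \in U_{b^{(m)},j^{(m)}}$, then append $v_m b^{(m)}$ to $w$ and update $x_m := h_{[b^{(m)}]}(h_{[v_m]}(x_{m-1}))$. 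Finally, append a navigation block $v_{L+1}$ of length $\le k_0(U)$ with $h_{[v_{L+1}]}(x_L) \in U$.

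The total length is bounded by $k_1 := \sum_{m=1}^L \bigl(k_0(U_{b^{(m)},j^{(m)}}) + 2r + 1\bigr) + k_0(U)$, depending only on $\delta$, $U$ and $H$. At the position $i_m$ in the middle of the $m$-th pattern block, the symbolic window of any $\omega$ extending $w$ is exactly $b^{(m)}$, and the $N$-coordinate equals $h_{[b']}(h_{[v_m]}(x_{m-1})) \in h_{[b']}(U_{b^{(m)},j^{(m)}}) = B_\eta(y_{j^{(m)}})$, which delivers the required $\delta$-approximation for any target point within $\delta/3$ of $y_{j^{(m)}}$. The one place where care is needed, and which I regard as the only real subtlety, is the choice of navigation target as $h_{[b']}^{-1}(B_\eta(y_j))$ rather than $B_\eta(y_j)$ itself: this compensates for the fact that the orbit keeps moving through the first half of the pattern block \emph{after} the navigation block has ended, so that proximity to $y_j$ is achieved at the middle of the pattern rather than at its start.
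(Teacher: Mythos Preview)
Your proof is correct and follows essentially the same approach as the paper: enumerate all pairs $(b,j)$ of a symbolic word of length $2r+1$ and an element of a finite $\delta$-net in $N$, and for each pair use Lemma~\ref{l.go_home} to navigate into the preimage $h_{[b']}^{-1}(\text{ball around }y_j)$ before appending the full pattern word $b$, finishing with one more navigation into $U$. The only cosmetic difference is that the paper simply takes the target balls of radius $\delta/2$, whereas your uniform-continuity choice of $\eta$ is a harmless extra precaution that is not actually used in your final estimate.
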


\begin{proof}
Let $\delta$ and $U$ be given.
Choose a finite $(\delta/2)$-dense subset $Y \subset N$.
Let $m \in \NN$ be such that $2^{-m} \le \delta$ and 
let $W$ be the set of all words  of length $2m+1$ on the alphabet $\{0,\dots,\ell-1\}$.
Enumerate the set $Y \times W$ as $\{(y_i, w_i) \; ; 1 \le i \le r \}$.
For each $i \in \{1,\dots,r\}$, let $B_i$ be the open ball of center $y_i$ and radius $\delta/2$.
Let $w_i^-$ be the initial subword of $w_i$ of length $m$,
and let $U_i = \big(h_{[w_i^-]}\big)^{-1}(B_i)$.
Define also $U_{r+1} = U$.

Let $x \in N$ be given.
We apply Lemma~\ref{l.go_home} and find a word $w'_1$ of length at most $k_0(U_1)$
such that $h_{[w'_1]}$ sends $x$ into $U_1$.
Inductively, assuming that words $w'_1$, \dots, $w'_{i-1}$ (where $i\le r + 1$)
are already defined, we apply Lemma~\ref{l.go_home} and find a word $w_i'$ 
of length at most $k_0(U_i)$
such that $h_{[w'_i]}$ sends $h_{[w'_1 w_1 \dots w'_{i-1} w_{i-1}]}(x)$ into $U_i$.
Define
$$
w = w'_1 w_1 \dots w'_r w_r w'_{r + 1} \, .
$$
We can bound the length $k$ of $w$ independently of $x$.

Property (\ref{i.tour_2}) is obviously satisfied; let us check property (\ref{i.tour_1}).
Assume that $\omega \in \llbracket ; s_0 s_1 \dots s_{k-1} \rrbracket$, where $w=s_0 s_1 \dots s_{k-1}$.
Fix any point $(\omega^*,x^*) \in \ell^\ZZ \times N$;
we will show that a point in the segment of orbit $\phi_H^{[0,k]}(\omega,x)$ is $\delta$-close to $(\omega^*,x^*)$.
Write $\omega^*=(s_n^*)_{n\in\ZZ}$.
By the $\delta/2$-denseness of $Y$ and the definition of the set $\{(y_i,w_i) ; \; 1\le i \le r\}$, 
there exists $i$ such that
$$
d(y_i , x^*) < \delta/2 \quad\text{and}\quad
w_i = s^*_{-m} \dots s^*_{m} \, .
$$
Let $n_i$ be the length of the word 
$w'_1 w_1 \dots w'_{i-1} w_{i-1} w'_i w_i^-$;
so $m \le n_i \le k-m$.
Consider the iterate $(x_{n_i},\omega_{n_i}) = \phi_H^{n_i}(\omega,x)$.
Then we have:
\begin{itemize} 
\item $x_{n_i} \in B_i$, that is, $d(x_{n_i} , y_i) < \delta/2$, and in particular $d(x_{n_i}, x^*) < \delta$.
\item $\omega_{n_i} \in \llbracket  s_0 \dots s_{n_i-1} ; s_{n_i} \dots s_{k-1} \rrbracket$ and
in particular $d(\omega_{n_i},\omega^*) \le 2^{-m} \le \delta$,
because 
$s_{n_i-m} \dots s_{n_i+m} = w_i = s^*_{-m} \dots s^*_{m}$.
\end{itemize}
This shows that $(\omega_{n_i},x_{n_i})$ and $(\omega^*,x^*)$ are $\delta$-close, concluding the proof.
\end{proof}

The following is an immediate corollary of Lemma~\ref{l.tour_and_go_home}.

\begin{lemm}[Group tour and go home]\label{l.group_tour_and_go_home}
Let $H=\{h_0, \dots, h_{\ell-1}\}$ be a positively minimal set of 
homeomorphisms of a compact metric space $N$.
For every $\delta>0$ and every nonempty open set $U \subset N$,
there exist $\rho=\rho(\delta,U)>0$ and $k_1 = k_1(\delta, U) \in \NN^*$ such that
for every ball $B \subset N$ of radius $\rho$,
there exists a word $w = s_0 s_1 \dots s_{k-1}$ 
on the alphabet $\{0,\dots,\ell-1\}$ of length $k \le k_1$ 
such that:

\begin{enumerate}
\item 
for every $(\omega,x) \in \llbracket ; s_0 s_1 \dots s_{k-1} \rrbracket \times B$,
the segment of orbit $\phi_H^{[0,k]} (\omega,x)$ 
is $\delta$-dense in $\ell^\ZZ \times N$;

\item 
$h_{[w]}(B) \subset U$.
\end{enumerate}	
\end{lemm}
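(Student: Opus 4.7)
The plan is to deduce Lemma~\ref{l.group_tour_and_go_home} from Lemma~\ref{l.tour_and_go_home} by a short uniform-continuity-plus-compactness argument, as the phrase ``immediate corollary'' suggests.

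First, I would shrink the target slightly: fix a nonempty open set $U' \subset U$ with $\overline{U'} \subset U$, and apply Lemma~\ref{l.tour_and_go_home} with parameters $\delta/2$ and $U'$. This yields an integer $k_1' := k_1(\delta/2, U')$ and, for every $x \in N$, a word $w_x$ of length $k(x) \le k_1'$ such that $h_{[w_x]}(x) \in U'$ and $\phi_H^{[0,k(x)]}(\omega, x)$ is $(\delta/2)$-dense in $\ell^\ZZ \times N$ for every $\omega \in \llbracket ; w_x \rrbracket$.

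Next, I would invoke continuity of the (finitely many possible) compositions $h_{[v]}$ with $|v| \le k_1'$ to associate to each $x \in N$ a radius $\rho(x) > 0$ such that, for every $y \in N$ with $d(y,x) < \rho(x)$, one has $h_{[w_x]}(y) \in U$ (this is where $\overline{U'} \subset U$ is used) and $d(h_{[v]}(y), h_{[v]}(x)) < \delta/2$ for every prefix $v$ of $w_x$. By compactness of $N$, I cover it by finitely many balls $B(x_j, \rho(x_j)/2)$, $1 \le j \le J$, and set $\rho := \min_j \rho(x_j)/2$ and $k_1 := k_1'$. Any ball $B \subset N$ of radius $\rho$ has its center inside some $B(x_j, \rho(x_j)/2)$ and therefore satisfies $B \subset B(x_j, \rho(x_j))$; I then take $w := w_{x_j}$.

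The two conclusions follow at once. Property~(2) is just $h_{[w]}(B) \subset h_{[w_{x_j}]}(B(x_j, \rho(x_j))) \subset U$. For property~(1), pick any $(\omega, y) \in \llbracket ; w \rrbracket \times B$; the orbit $\phi_H^{[0,k]}(\omega, y)$ shares its $\ell^\ZZ$-projection with the reference orbit $\phi_H^{[0,k]}(\omega, x_j)$ and, by the choice of $\rho(x_j)$, its $N$-projection stays within $\delta/2$ of the reference at every time, so the $(\delta/2)$-density of the reference upgrades to $\delta$-density. I foresee no genuine obstacle; the only points requiring mild care are the preliminary replacement of $U$ by a slightly smaller $U'$ and ensuring that balls of radius $\rho$ centered at arbitrary points (rather than at the $x_j$'s themselves) still fit inside the $\rho(x_j)$-neighborhoods.
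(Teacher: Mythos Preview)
Your proposal is correct and is precisely the argument the paper has in mind: the paper's own proof reads in its entirety ``Use Lemma~\ref{l.tour_and_go_home} and continuity,'' and you have simply spelled out the uniform-continuity-plus-compactness details behind that phrase.
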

	
\begin{proof}
Use Lemma~\ref{l.tour_and_go_home} and continuity.
\end{proof}

\section{Exploiting maneuverability} \label{s.adjust}

The next lemma says that if an induced IFS on the flag bundle satisfies the
maneuverability condition, then we can select orbits whose derivatives
in the upper triangular matrix form \eqref{e.M}
have approximately prescribed diagonals.

\begin{lemm}[Products with prescribed diagonals] \label{l.prescribe}
If $G = \{ g_0,\dots,g_{\ell-1}\} \subset \Diff^1(M)$ has the maneuverability property
then there exists $c>0$ such that the following holds.
For every $\eta>0$ there is $q\in \NN^*$ such that for all
$(x,\flag)\in \cF M$ and every $(\chi_1,\dots,\chi_d)\in [-c,c]^d$
there is a word $w$ of length $q$ on the alphabet $\{0,\dots,\ell-1\}$ such that
$$
\left| \frac{1}{q} \, \log M_{i,i} (Dg_{[w]} (x) , \flag)  - \chi_i \right| < \eta
$$
for all $i=1,\dots, d$.
\end{lemm}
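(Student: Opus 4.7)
The strategy is to build the word $w$ symbol by symbol by a greedy scheme that, at each step, drives the accumulated partial sum back toward the desired linear trajectory $k \mapsto k\chi_i$ in each coordinate $i=1,\dots,d$ simultaneously. The maneuverability hypothesis controls the sign of the next multiplicative increment on each coordinate, and a compactness argument upgrades this to a \emph{uniform} positive lower bound on its modulus; this lower bound will determine the constant $c$.

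First I would extract a uniform $c_0>0$ from maneuverability. For each sign vector $t\in\{-1,+1\}^d$ and each $g\in G$, the set
$U_t^g:=\{(x,\flag)\in\cF M : t_i\log M_{i,i}(Dg(x),\flag)>0\text{ for all }i\}$
is open, and maneuverability says that $\{U_t^g\}_{g\in G}$ covers $\cF M$. By compactness of $\cF M$ and the finiteness of $G$ and of $\{-1,+1\}^d$, there is a single constant $c_0>0$ such that for every $(x,\flag)\in\cF M$ and every $t$ one can choose $g=g(x,\flag,t)\in G$ with $t_i\log M_{i,i}(Dg(x),\flag)\ge c_0$ for each $i$. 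Let also $C:=\sup_{g,x,\flag,i}|\log M_{i,i}(Dg(x),\flag)|$, which is finite by continuity and compactness, and set $c:=c_0/2$.

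Given $(x,\flag)$ and $(\chi_1,\dots,\chi_d)\in[-c,c]^d$, I would define $(s_k)$ recursively. Writing $(x_k,\flag_k):=\cF g_{[s_1\dots s_k]}(x,\flag)$, introduce the \emph{deficit} $D_i(k):=k\chi_i-\sum_{j=1}^{k}\log M_{i,i}(Dg_{s_j}(x_{j-1}),\flag_{j-1})$, with $D_i(0)=0$. By the multiplicativity of diagonals for products of upper triangular matrices (Section~\ref{s.triang}), this equals $k\chi_i-\log M_{i,i}(Dg_{[s_1\dots s_k]}(x),\flag)$. Define the sign pattern $t_{k+1,i}:=+1$ if $D_i(k)\ge 0$ and $t_{k+1,i}:=-1$ otherwise, and use the uniform $c_0$ to pick $s_{k+1}$ satisfying $t_{k+1,i}\log M_{i,i}(Dg_{s_{k+1}}(x_k),\flag_k)\ge c_0$ for every $i$.

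A direct induction then shows $|D_i(k)|\le C+c$ for every $k\ge 0$ and every $i$. If, say, $D_i(k)\ge 0$, then the chosen increment satisfies $\log M_{i,i}(Dg_{s_{k+1}}(x_k),\flag_k)\in[c_0,C]$, so $D_i(k+1)=D_i(k)+\chi_i-\log M_{i,i}(\cdots)\le (C+c)+c-c_0=C$ and $D_i(k+1)\ge 0+(-c)-C=-(C+c)$; the case $D_i(k)<0$ is symmetric. Choosing $q:=\lceil (C+c)/\eta\rceil$, which depends only on $\eta$ and not on $(x,\flag)$ or on $(\chi_i)$, yields $|q^{-1}\log M_{i,i}(Dg_{[w]}(x),\flag)-\chi_i|=|D_i(q)|/q\le\eta$ for every $i$, giving the lemma. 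The only step requiring any delicacy is the compactness extraction of $c_0$; everything else is routine bookkeeping of the greedy deficit.
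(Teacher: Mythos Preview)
Your proof is correct and follows essentially the same approach as the paper: extract a uniform constant from maneuverability by compactness, run a greedy scheme choosing each symbol so that the sign of the next diagonal increment matches the sign of the current deficit, and show by induction that the deficit stays bounded independently of $k$. The only cosmetic differences are that the paper takes $c$ to be the uniform maneuverability constant itself (rather than half of it) and obtains the slightly sharper bound $|D_i(k)|\le C$ via the observation that $|\delta-\lambda|=\bigl||\delta|-|\lambda|\bigr|$ when $\delta$ and $\lambda$ have the same sign; your case-by-case estimate yielding $C+c$ works just as well.
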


\begin{proof}
Take $C>0$ such that, for all $s \in \{0,\dots,\ell-1\}$ and $x\in M$,
\begin{equation}\label{e.def_C}
e^{-C} \le \|(Dg_s (x))^{-1}\|^{-1} \le  \|Dg_s (x)\| \le e^C \, .
\end{equation}

Using continuity and compactness, we can make maneuverability uniform in the following way:
there exists a constant $c>0$
such that for every $(x,\flag)\in\cF M$ 
and for every sequence of signs $t = (t_1,\dots,t_d)\in\{-1,+1\}^d$
there is $s \in \{0,\dots,\ell-1\}$ such that 
$$
t_i \, \log M_{i,i}(Dg_s(x),\flag) \ge c \quad \text{for each $i$.}
$$
Notice that (as a consequence of \eqref{e.subdeterminant}) the left hand side is at most $C$.
Given $\eta>0$, let $q := \lceil C/\eta \rceil$.

Now let $(x,\flag)\in \cF M$ and $(\chi_1,\dots,\chi_d)\in [-c,c]^d$ be given.
We inductively define the symbols $s_0$, $s_1$, \dots, $s_{q-1}$ forming the word $w$.
The idea of the proof is simple: at each step we look at diagonal obtained so far,
choose signs pointing towards the objective vector $(\chi_1,\dots,\chi_d)$, and apply uniform maneuverability
to pass to the next step.

Precisely, we choose the symbol $s_0$ so that for each $i=1,\dots,d$,
the number $\lambda_i^{(0)}:=\log M_{i,i} (Dg_{s_0} (x), \flag)$ has absolute value at least $c$ 
and has the same sign as $\chi_i$
(where we adopt the convention that the sign of $0$ is $+1$).
Assume that $s_0$, \dots, $s_{n-1}$ were already defined.
Let $(x_n, \flag_n) := (\cF g_{[s_0 s_1 \dots s_{n-1}]})(x,\flag)$.
Then we choose the symbol $s_n$ such that for each $i=1,\dots,d$, the number
$\lambda_i^{(n)} := \log M_{i,i} (Dg_{s_n} (x_n), \flag_n)$ 
has absolute value at least $c$ and has the same sign as the number
$$
\delta_i^{(n)} := n \cdot \chi_i - \log M_{i,i}(Dg_{[s_0 s_1 \dots s_{n-1}]}(x),\flag) 
               =  n \cdot \chi_i - \sum_{j=0}^{n-1} \lambda_i^{(j)} \, .
$$

Let us prove that this sequence of symbols has the required properties.
Let $i\in\{1,\ldots,d\}$ be fixed.
We will prove the following fact, which (in view of the definition of $q$) implies the lemma:
\begin{equation}\label{e.stronger}
\left| \delta_i^{(n)} \right| \le C \quad \text{for each $n\in\{1,\ldots,q\}$.}
\end{equation}
First we check the case $n=1$.
Since $\lambda^{(0)}_i$ and $\chi_i$ have the same sign, we have
$$
\left| \delta_i^{(1)} \right| = \left| \chi_i - \lambda^{(0)}_i\right| \le \left| \lambda^{(0)}_i\right| \le C,
$$
where in the last step we used \eqref{e.def_C}.
Next, assume that \eqref{e.stronger} holds for some $n<q$. 
Then
\begin{align*}
\left| \delta^{(n+1)}_i \right| 
&=   \Big| \chi_i + \delta^{(n)}_i - \lambda^{(n)}_i \Big| \\
&\le \Big| \chi_i \Big| + \Big| \delta^{(n)}_i - \lambda^{(n)}_i\Big|  \\
&=   \Big| \chi_i \Big| + \Big| \big|\delta^{(n)}_i\big| - \big|\lambda^{(n)}_i\big| \Big|  
\quad \text{(because $\delta^{(n)}_i$ and $\lambda^{(n)}_i$ have the same sign)}\\
&=   \underbrace{\Big| \chi_i \Big|}_{\le c} 
   + \underbrace{\max \left( \big|\delta^{(n)}_i\big|, \big|\lambda^{(n)}_i\big| \right)}_{\le C} 
   - \underbrace{\min \left( \big|\delta^{(n)}_i\big|, \big|\lambda^{(n)}_i\big| \right)}_{\ge c} \\
& \le C \, .
\end{align*}
This proves \eqref{e.stronger} and the lemma.
\end{proof}

\begin{rema}
Estimate \eqref{e.stronger} resembles general 
results by Shapley, Folkman, and Starr on the 
approximation of convex hulls of a sum of sets by points of the sum; 
see \cite[p.~396ff]{ArrowH}. 
\end{rema}

\section{Triangularity}\label{s.triang}

Estimating the size of products of matrices (and hence computing Lyapunov exponents)
may be a difficult business. 
The task is much simpler if the matrices happen to be upper triangular:
in that case the non-commutativity is tamed (see Proposition~\ref{p.triangular_cocycle} and  Lemma~\ref{l.off-diagonal} below).
Working in the flag bundle has the advantage of making 
all derivatives upper triangular, in a sense that will be made precise.

\subsection{Linearly induced map between flag manifolds}\label{ss.flag_derivative}

We continue the discussion from \S~\ref{ss.prelim_flag}.
Here we will give geometrical information about diffeomorphisms 
$\cF L \colon \cF E \to \cF E'$ induced by linear maps $L \colon E \to E'$.

Fix an integer $d \ge 2$.
Recall that the orthonormal group $\Ogroup(d)$ is a compact manifold of dimension $d(d-1)/2$,
whose tangent space at the identity matrix is the vector space $\so(d)$ of antisymmetric matrices.
For each $(i,j)$ with $1\le j < i \le d$,
let $X_{ij}$ be the $d \times d$ matrix such that its $(i,j)$-entry is $1$,
its $(j,i)$-entry is $-1$, and all other entries are zero.
Then $(X_{ij})_{1\le j < i \le d}$ is a basis of $\so(d)$.
For reasons that will become apparent later, we \emph{order} this basis as follows:
\begin{equation}\label{e.canonical}
\big( X_{d,1} \ ; \  X_{d-1,1} \, , X_{d,2} \ ; \  X_{d-2, 1} \, , X_{d-1,2} \, , X_{d,3} \ ; \dots ; \  
X_{2,1} \, , X_{3,2} \, , \dots , X_{d,d-1} \big).
\end{equation}
We call this the \emph{canonical basis} or \emph{canonical frame} of $\so(d)$.
Pushing forward by right translations, 
we extend this to a frame field on $\Ogroup(d)$,
called the \emph{canonical frame field}. 
We take on $\Ogroup(d)$ the Riemannian metric for which the canonical frames are orthonormal.\footnote{This metric is obviously right-invariant. Actually, it is also left-invariant. Indeed, a calculation shows that $\langle X, Y \rangle = - \trace XY/2$ for $X$, $Y\in \so(d)$, which is invariant under the adjoint action of the group, and therefore can be uniquely extended to a bi-invariant Riemannian metric. Another remark: this inner product is the Killing form divided by $-2(d-2)$ (if $d>2$).} 

Now let $E$ be a real vector space of dimension $d$,
endowed with an inner product.
Given any ordered flag $\flag_0 \in \ordF E$, 
recall that $\cO(\flag_0)$ represents the ordered orthonormal basis that represents $\flag_0$.
We define a bijection
$\iota_{\flag_0} \colon \cF E \to \Ogroup(d)$ as follows:
$\iota_{\flag_0}(\flag)$ is the matrix of the basis $\cO(\flag)$ with respect to the basis $\cO(\flag_0)$.
Notice that given another flag $\flag_1$, 
the following diagram commutes:
$$
\begin{tikzcd}
\ordF E \arrow{r}{\iota_{\flag_0}} \arrow{rd}[swap]{\iota_{\flag_1}}
&\Ogroup(d) \arrow{d}{\text{right translation by } \iota_{\flag_1}(\flag_0)} \\
&\Ogroup(d)
\end{tikzcd}
$$
Since right translations are diffeomorphisms, we can pull-back 
under any $\iota_{\flag_0}$ the differentiable structure of $\Ogroup(d)$
and obtain a well-defined differentiable structure of $\ordF E$.
(This makes more precise the explanation given in \S~\ref{ss.prelim_flag}.)
Analogously, right translations preserve the canonical frame field on $\Ogroup(d)$,
we can pull it back under any $\iota_{\flag_0}$ and obtain 
a well-defined frame field on $\ordF E$ that we call \emph{canonical}.
We endow $\ordF E$ with the Riemannian metric that makes these frames orthonormal.

Let $L \colon E \to E'$ be an isomorphism between real vector spaces of dimension $d$.
Endow $E$ and $E'$ with inner products and consider on $\ordF E$ and $\ordF E'$
the associated canonical frame fields.
For any $\flag \in \ordF E$, let $T(L,\flag)$  
denote the matrix of the derivative of the map $\ordF L \colon \ordF E \to \ordF E'$
with respect to the frames at the points $\flag$ and $(\ordF L)(\flag)$.

The following result is probably known, 
but we weren't able to find a reference:

\begin{prop}\label{p.flag_derivative}
The matrix $T(L,\flag)$ is upper triangular.
The entries in its diagonal form the list (with repetitions according to multiplicity):
\begin{equation}\label{e.diagonal_quotients}
\frac{M_{i,i}(L,\flag)}{M_{j,j}(L,\flag)} 
\quad \text{where $1 \le j < i \le d$.}
\end{equation}
\end{prop}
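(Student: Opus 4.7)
The plan is to transfer the problem to $\Ogroup(d)$ via the chart $\iota_\flag$ and then to analyze the induced map through the QR/Iwasawa decomposition. Identifying $E$ with $\RR^d$ through $\cO(\flag)$ and $E'$ through $\cO((\ordF L)\flag)$, the isomorphism $L$ is represented by $M := M(L,\flag)$, which is upper triangular with positive diagonal. Unwinding the definitions of $\iota_\flag$ and $\iota_{(\ordF L)\flag}$, $\ordF L$ corresponds under these charts to the map $\tilde L \colon \Ogroup(d) \to \Ogroup(d)$ sending $Q$ to the orthogonal factor in the QR decomposition $MQ = Q'R$. Since $M$ is its own $R$-factor, $\tilde L(I) = I$; and since the canonical frames at $I$ on either side are just the canonical basis of $\so(d) = T_I \Ogroup(d)$, the matrix $T(L,\flag)$ is precisely the matrix of $d\tilde L|_I \colon \so(d) \to \so(d)$ in that basis.

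To compute $d\tilde L|_I$, I would set $Q(t) = \exp(tX)$ for $X \in \so(d)$ and differentiate $MQ(t) = Q'(t) R(t)$ at $t=0$, obtaining
\begin{equation*}
MX = AM + B, \qquad A := \tfrac{d}{dt}\big|_{0} Q'(t) \in \so(d), \qquad B := \tfrac{d}{dt}\big|_{0} R(t) \text{ upper triangular,}
\end{equation*}
with $A = d\tilde L|_I(X)$. Rewriting as $MXM^{-1} = A + BM^{-1}$ and invoking the (unique) decomposition of any $d\times d$ matrix as an antisymmetric matrix plus an upper triangular one, $A$ is identified as the antisymmetric part of $MXM^{-1}$: its strict lower triangular entries agree with those of $MXM^{-1}$, and antisymmetry fixes the rest.

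Setting $X = X_{ij}$, a direct computation gives
\begin{equation*}
\bigl(MX_{ij}M^{-1}\bigr)_{a,b} = M_{a,i}(M^{-1})_{j,b} - M_{a,j}(M^{-1})_{i,b}.
\end{equation*}
For $a>b$, the second summand vanishes (nonvanishing would require $a \le j$ and $b \ge i$, incompatible with $a>b$ and $j<i$), and upper triangularity of $M$ and $M^{-1}$ makes the first summand nonzero only when $j \le b < a \le i$; in particular at $(a,b) = (i,j)$ it equals $M_{ii}(M^{-1})_{jj} = M_{ii}/M_{jj}$. Translating into the $\{X_{ab}\}$ basis, $A(X_{ij})$ is a linear combination of those $X_{ab}$ with $a - b \le i - j$, equality forced only at $(a,b) = (i,j)$ with coefficient $M_{ii}/M_{jj}$. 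Since the canonical ordering \eqref{e.canonical} lists the basis $\{X_{ab}\}$ in blocks of constant $a-b$, this says that $T(L,\flag)$ is triangular with the asserted diagonal. The main obstacle is bookkeeping: carefully unwinding the geometric definition of $T(L,\flag)$ into the concrete linear algebra above and then reading off the direction of triangularity from the ordering in the Iwasawa projection.
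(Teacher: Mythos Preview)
Your proposal is correct and follows essentially the same route as the paper: both transfer the problem to $\Ogroup(d)$ via the charts $\iota_\flag$, identify the induced map with the QR/Iwasawa projection $Q \mapsto Q'$ where $MQ = Q'R$, differentiate at the identity to obtain that $d\tilde L|_I(X)$ is the unique antisymmetric matrix whose strictly lower-triangular part agrees with that of $MXM^{-1}$, and then read off the triangular structure and diagonal entries from the explicit formula for the strictly lower-triangular entries of $MX_{ij}M^{-1}$. The paper's proof uses the same differentiation of $RQ(t) = \hat Q(t)\hat R(t)$ and the same entry-by-entry analysis; your notation $M$, $\tilde L$, $A$ corresponds to the paper's $R$, $\Phi$, $Y$.
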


\begin{proof} 
Fix any $\flag \in \ordF E$ and let $\flag' = \cF L (\flag)$.
Let $\Phi = \Phi_{L,\flag}$ be the diffeomorphism that makes the following diagram commute:
$$
\begin{tikzcd}
\ordF E  \arrow{r}{\iota_{\flag}} \arrow{d}[swap]{\ordF L} &\Ogroup(d) \arrow{d}{\Phi} \\
\ordF E' \arrow{r}[swap]{\iota_{\flag'}}                   &\Ogroup(d) 
\end{tikzcd}
$$
Notice that $\Phi$ fixes the identity matrix,
and that $T(L,\flag)$ coincides with the matrix of $D\Phi(\Id)$ with respect to the 
canonical basis \eqref{e.canonical} of $\so(d)$.

Let $R = M(L, \flag)$.
If $Q \in \Ogroup(d)$ then $\Phi (Q)$
is the unique matrix $\hat Q \in \Ogroup(d)$ such that
$R Q = \hat{Q} \hat{R}$ for some upper triangular matrix $\hat{R}$ with positive diagonal entries.\footnote{Those familiar with the QR algorithm will recognize this equation; see Remark~\ref{r.QR_algo}.}

Given $X \in \so(d)$, let us compute $Y := D\Phi(\Id)(X)$.
Take a differentiable curve $Q(t)$ such that $Q(0) = \Id$ and $Q'(0)=X$.
Let $\hat{Q}(t) = \Phi(Q(t))$; so $Y = \hat{Q}'(0)$.
Write $R Q(t) = \hat{Q}(t) \hat{R}(t)$, 
where $\hat{R}(t)$ is upper triangular with positive diagonal entries.
Differentiating this relation at $t=0$, and using that $\hat{Q}(0) = \Id$ and $\hat{R}(0) = R$,
we obtain $R X = Y R + \hat{R}'(0)$.
In particular, $R X R^{-1} - Y$ is upper triangular.
It follows that $Y$ is the unique antisymmetric matrix
whose under-diagonal part coincides with the under-diagonal part of $R X R^{-1}$.
More explicitly, write 
$R = (r_{ij})$, $R^{-1}=(s_{ij})$, $X=(x_{ij})$, $Y = (y_{ij})$;
then
$$
i > j \ \Rightarrow \ 
y_{ij} = \sum_{k , \ell} r_{i k} x_{k \ell} s_{\ell j} \, .
$$
Consider the matrix $T(L,\flag)$ of $D\Phi(\Id)$ 
with respect to the canonical basis $\{X_{i,j}\}_{1\le j < i \le d}$ of $\so(d)$;
its $((i,j),(k,\ell))$-entry is $r_{ik} s_{\ell j}$,
which vanishes unless $k \ge i > j \ge \ell$.
In particular, this entry vanishes if $i-j > k-\ell$.
Under the chosen ordering \eqref{e.canonical} of the canonical basis $\{X_{i,j}\}$,
the sequence $i-j$ is nonincreasing.
Therefore $T(L,\flag)$ is an upper triangular matrix.
What are its diagonal entries?
The $((i,j),(i,j))$-entry is $r_{ii} s_{jj} = r_{ii}/r_{jj}$.
\end{proof}

Let us now consider non-oriented flags.
Let $H$ be the subgroup of $\Ogroup(d)$ formed by the matrices
\begin{equation}\label{e.pm}
\begin{pmatrix}
\pm 1 &        &       \\
      & \ddots &       \\ 
      &        & \pm 1 
\end{pmatrix}
\, .
\end{equation}
Take any $\flag_0 \in \ordF E$.
Then there is a bijection $[\iota_{\flag_0}]$ such that the following diagram commutes:
$$
\begin{tikzcd}
\ordF E \arrow{r}{\iota_{\flag_0}} \arrow{d}[swap]{\text{disorientation map}} 
&\Ogroup(d) \arrow{d}{\text{coset projection}}\\
  \cF E \arrow{r}[swap]{[\iota_{\flag_0}]}   &\Ogroup(d)/H
\end{tikzcd} 
$$
The adjoint action of $H$ does not preserve the canonical frame on $\so(d)$,
but each vector in the frame is either preserved or multiplied by $-1$,
so the action preserves an ``\emph{up-to-sign frame}''. 
We push it forward by right translations and obtain a field of up-to-sign frames
on $\Ogroup(d)/H$, which is then pulled back to a well-defined field of up-to-sign frames on $\cF E$.
There is are unique Riemannian metrics that make these up-to-sign frames orthonormal.

Now consider the diffeomorphism $\cF L \colon \cF E \to \cF E'$ 
induced by a liner isomorphism $L \colon E \to E'$.
Let $T(L,\flag)$  
denote the up-to-sign matrix of the derivative of the map $\cF L$
with respect to the up-to-sign frames at the points $\flag$ and $(\ordF L)(\flag)$.
Notice that the diagonal entries are well-defined.
It follows from Proposition~\ref{p.flag_derivative}
that this ``matrix'' is upper-triangular and that 
its diagonal entries are the numbers \eqref{e.diagonal_quotients}.

As a consequence, we have the following fact\footnote{Similar results are obtained in \cite{ShubVasquez}; see Lemma~4.}:

\begin{coro}[Stable flag]\label{c.stable} 
Suppose that $L \colon E \to E$ is a linear isomorphism 
whose eigenvalues have distinct moduli and are ordered as $|\lambda_1| > \cdots > |\lambda_d|$.
Consider the flag $\sflag = (S_i) \in \cF E$  
where $S_i$ is spanned by eigenvectors corresponding to the first $i$ eigenvalues.
Then $\sflag$ is a hyperbolic attracting fixed point of $\cF L$. 
\end{coro}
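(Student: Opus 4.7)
The plan is to deduce the corollary as a direct consequence of Proposition~\ref{p.flag_derivative} together with the subdeterminant formula \eqref{e.subdeterminant}.

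First I would verify that $\sflag$ is a fixed point of $\cF L$. Since each $S_i$ is spanned by eigenvectors of $L$, the subspace $S_i$ is $L$-invariant, so $(\cF L)(\sflag) = (L(S_1) \subset \cdots \subset L(S_d)) = \sflag$.

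Next I would compute the diagonal entries of the matrix $M(L, \sflag)$. By formula \eqref{e.subdeterminant}, the product $M_{1,1} M_{2,2} \cdots M_{i,i}$ equals, up to sign, $\det(L|S_i) = \lambda_1 \cdots \lambda_i$. Since the diagonal entries $M_{i,i}$ are unambiguously positive for a non-oriented flag, we obtain $M_{i,i}(L, \sflag) = |\lambda_i|$ for each $i$.

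Now I would invoke Proposition~\ref{p.flag_derivative}: the derivative of $\cF L$ at $\sflag$, expressed in the (up-to-sign) canonical frames, is upper triangular with diagonal entries
\[
\frac{M_{i,i}(L,\sflag)}{M_{j,j}(L,\sflag)} = \frac{|\lambda_i|}{|\lambda_j|} \qquad (1 \le j < i \le d).
\]
By the hypothesis $|\lambda_1| > \cdots > |\lambda_d|$, every such ratio lies strictly in $(0,1)$. Since the eigenvalues of an upper triangular matrix are its diagonal entries, all eigenvalues of $D(\cF L)(\sflag)$ have modulus less than $1$, which is exactly the statement that $\sflag$ is a hyperbolic attracting fixed point of $\cF L$.

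There is no real obstacle: once Proposition~\ref{p.flag_derivative} is in hand, the only mild subtlety is being consistent about working with the non-oriented (hence sign-ambiguous) frames and using that the diagonal entries in question are nevertheless well-defined and positive; everything else is bookkeeping.
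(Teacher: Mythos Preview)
Your proof is correct and follows exactly the route the paper intends: the corollary is stated in the paper as an immediate consequence of Proposition~\ref{p.flag_derivative}, with no further argument given, and your proposal simply spells out those details (fixed-point verification, computing $M_{i,i}(L,\sflag)=|\lambda_i|$ via \eqref{e.subdeterminant}, reading off the diagonal of the derivative, and concluding from triangularity). There is nothing to add.
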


\begin{rema}\label{r.QR_algo}\footnote{We thank Carlos Tomei for telling us about the QR algorithm.} 
The \emph{QR algorithm} is the most widely used numerical method to compute 
the eigenvalues of a matrix $A_0 \in \GL(d,\RR)$ (see \cite[p.~356]{Watkins}).
It runs as follows:
starting with $n=0$, compute the QR decomposition of $A_n$, say, $A_n = Q_n R_n$,
let $A_{n+1} := R_n Q_n$, increment $n$, and repeat.
Let us interpret the sequence of matrices $A_n$ produced by the algorithm 
in terms of the diffeomorphism $\cF A_0 \colon \cF \RR^d \to \cF \RR^d$.
If $\flag_0$ is the canonical flag of $\RR^n$
then $A_n$ is the matrix of $A_0$ with respect to an orthonormal basis that represents
the flag
$\flag_n := (\cF A_0)^n(\flag_0)$.
If the eigenvalues of $A_0$ have different moduli then 
the sequence $(\flag_n)$ converges.
(Actually $\cF A_0$ is a Morse--Smale diffeomorphism whose periodic points are fixed: see \cite{ShubVasquez}.)
It follows that the sequence $(A_n)$ converges to upper triangular form.
In particular, if $n$ is large then the diagonal entries of $A_n$ 
give approximations to the eigenvalues of $A_0$.
(In practice, the algorithm is modified in order to accelerate convergence and reduce computational cost.)
\end{rema}

\subsection{Geometry of the flag bundles}\label{ss.bundle_geometry}

Now consider a compact connected manifold $M$.
We will discuss in more detail the flag bundles $\ordF M$ and $\cF M$,
defined in Example~\ref{ex.flag_derivative_cocycle}.

The tangent space of the flag manifold $\ordF M$
at a point $\xi = (x,\flag)\in \ordF M$
has a canonical subspace called the \emph{vertical subspace},
denoted by $\mathrm{Vert}(\xi)$, which is the tangent space 
of the fiber $(\ordF M)_x = \ordF(T_x M)$ at $\xi$.

Now fix a Riemannian metric on $M$.
Then we can define the \emph{horizontal subspace}, denoted $\mathrm{Horiz}(\xi)$, as follows:
for each smooth curve starting at the point $x$, consider 
the parallel transport of the flag $\flag$,
which gives a smooth curve in the manifold $\ordF M$;
consider the initial velocity $w \in T_\xi(\ordF M)$ of the curve.
Then $\mathrm{Horiz}(\xi)$ consists of all vectors $w$ obtained in this form.\footnote{This field of horizontal subspaces is actually an Ehresmann connection on the principal bundle $\ordF M$.} 
The tangent space of $\ordF M$ at $\xi$ splits as
\begin{equation}\label{e.vert_horiz}
T_\xi(\ordF M) = \mathrm{Vert}(\xi) \oplus \mathrm{Horiz}(\xi) \, .
\end{equation}
If $\pi \colon \ordF M \to M$ is the projection, then 
$\mathrm{Vert}(\xi)$ is the kernel of the derivative $D \pi(\xi)$,
and the restriction of to $\mathrm{Horiz}(\xi)$ is an isomorphism onto $T_x M$.

Since $\mathrm{Vert}(\xi) = T_\flag (\ordF(T_x M))$, there is a well-defined canonical frame on $\mathrm{Vert(\xi)}$ (see the previous subsection).
On the other hand, there is a natural frame on $\mathrm{Horiz}(\xi)$, namely, 
the unique frame sent by $D \pi(\xi)$ to the frame $\cO(\flag)$ of $T_x M$.
By concatenating these two frames (in the same order as in \eqref{e.vert_horiz}),
we obtain a frame of $T_\xi(\ordF M)$, which will called \emph{canonical}.
So we have defined a canonical field of frames on $\ordF M$.
We endow $\ordF M$ with the Riemannian metric that makes these frames orthonormal.

Suppose that $g \colon M \to M$ is a $C^2$ diffeomorphism.
Then $g$ induces a $C^1$ diffeomorphism $\ordF g \colon \ordF M \to \ordF M$.
Take $\xi = (x,\flag) \in \ordF M$ and consider the derivative 
$D(\ordF g)(\xi)$;
expressing it as a matrix with respect to the canonical frames,
we obtain:
\renewcommand{\arraystretch}{1.5} 
\begin{equation}\label{e.big_triangular}
\left(
\begin{array}{c|c}
T(Dg(x),\flag) & *                \\
\hline 
0              & M(Dg(x),\flag)
\end{array}
\right),
\end{equation}
where $M$ and $T$ are the matrices defined in \S\S~\ref{ss.prelim_flag} and \ref{ss.flag_derivative},
respectively.
Recalling Proposition~\ref{p.flag_derivative}, we see that the matrix \eqref{e.big_triangular}
is upper triangular, and the entries in its diagonal are:
\begin{equation}\label{e.big_diagonal}
\frac{M_{i,i}(Dg(x),\flag)}{M_{j,j}(Dg(x),\flag)} 
\quad \text{and} \quad
M_{k,k}(Dg(x),\flag) \quad \text{(where $1 \le j < i \le d$ and $1 \le k \le d$).}
\end{equation}

\renewcommand{\arraystretch}{1} 

The case of non-oriented flags is analogous.
There is a canonical field of ``up-to-sign frames'' on $\cF M$.
Given a diffeomorphism $g$ and a point $\xi = (x,\flag) \in \cF M$ 
the entries of the matrix \eqref{e.big_triangular} which represents $D(\cF g)(\xi)$;
are defined up to sign,
while the diagonal entries are well-defined.

\begin{rema}
The triangularity property seem above can be summarized abstractly as follows:
The fiber bundle $\ordF (\ordF M) \to \ordF M$ has a special section 
which is invariant for under $\ordF (\ordF g)$, for any $g \in \Diff^2(M)$. 
\end{rema}

\subsection{The Lyapunov exponents in the flag bundle} \label{ss.lyapunovflag}

Using the description obtained in \S~\ref{ss.flag_derivative} 
for the derivatives of linearly-induced maps on flag manifolds,
the proof of Proposition~\ref{p.Lyap_in_flag} will be reduced to the following standard result:

\begin{prop}[Lyapunov exponents of triangular cocycles]\label{p.triangular_cocycle}
Let $T \colon X \to X$ be a continuous transformation of a compact metric space,
and let $\mu$ be an ergodic invariant measure for $T$.
Suppose $A \colon X \to \GL(d,\RR)$ is a continuous map
that takes values on upper-triangular matrices.
Consider the morphism $S(x,v) = (T(x),A(x)\cdot v)$ of the trivial vector bundle $V = X \times \RR^d$.
Then the Lyapunov exponents of $S$ with respect to $\mu$,
repeated according to multiplicity, are the numbers
$$
\int_X \log |a_{ii}(x)| \, d\mu(x), \quad i=1, 2, \dots, d,
$$
where $a_{ii}(x)$ are the diagonal entries of the matrix $A(x)$.
\end{prop}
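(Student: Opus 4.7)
The plan is to induct on $d$, using the invariant coordinate flag that comes for free from upper-triangularity of $A(x)$. Write $V_i := \mathrm{span}(e_1,\dots,e_i)$ and $\chi_i := \int_X \log |a_{ii}(x)|\,d\mu(x)$; each $\chi_i$ is finite because $|a_{ii}|$ is continuous and bounded away from $0$ on the compact space $X$ (using $\det A(x) \neq 0$ together with continuity). The base case $d=1$ is immediate: Birkhoff's ergodic theorem applied to the integrable function $\log|a_{11}|$ gives $\frac{1}{n}\log|a_{11}^{(n)}(x)| \to \chi_1$ for $\mu$-a.e.\ $x$, which is by definition the unique Lyapunov exponent of the one-dimensional cocycle.

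For the inductive step, I would exploit that upper-triangularity forces $A(x) V_{d-1} \subseteq V_{d-1}$ for every $x$, so $X \times V_{d-1}$ is an $S$-invariant subbundle. The cocycle restricted to this subbundle is again triangular, with diagonal $(a_{11},\dots,a_{d-1,d-1})$; by the inductive hypothesis its Lyapunov spectrum (with multiplicity) is $(\chi_1,\dots,\chi_{d-1})$. The crucial point is then that these $d-1$ numbers occur in the Lyapunov spectrum of the ambient cocycle too, with at least the same multiplicities. To see this, I would apply Oseledets' theorem to $S$ to obtain a decomposition $\RR^d = \bigoplus_j E_j(x)$ at $\mu$-a.e.\ $x$; because $V_{d-1}$ is $A$-invariant and the Riemannian norm on it coincides with the restriction of the ambient one, the subspaces $V_{d-1} \cap E_j(x)$ are again $A$-invariant and realize the Oseledets decomposition of the restricted cocycle, matching exponents and dimensions.

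To pin down the remaining exponent, I would apply Birkhoff once more, now to the integrable function $\log|\det A(x)| = \sum_{i=1}^d \log|a_{ii}(x)|$, obtaining
\begin{equation*}
\frac{1}{n}\log|\det A^{(n)}(x)| \;\longrightarrow\; \sum_{i=1}^d \chi_i \quad \text{for $\mu$-a.e.\ } x.
\end{equation*}
The left-hand side also converges to the sum of all $d$ Lyapunov exponents of $S$ counted with multiplicity. Since $\chi_1,\dots,\chi_{d-1}$ already account for $d-1$ of them, the last exponent is forced to equal $\chi_d$, completing the induction.

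The main obstacle I anticipate is the multiplicity bookkeeping in the second paragraph: one must verify carefully that intersecting the Oseledets filtration of the ambient cocycle with the constant invariant subspace $V_{d-1}$ genuinely produces the Oseledets objects of the restricted cocycle, so that each $\chi_i$ is inherited with the correct multiplicity. This is essentially the statement that Lyapunov spectra add across short exact sequences of invariant subbundles; it is standard (and proved e.g.\ in Arnold's book \cite{LArnold}), but it is where the real content of the argument resides.
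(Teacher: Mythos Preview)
Your argument is correct. The paper itself does not give a proof of this proposition: it simply refers the reader to Lemma~6.2 of \cite{JPS}, remarking that the argument there applies to any triangular cocycle over a compact metric space. Your self-contained inductive proof via the invariant coordinate subspace $V_{d-1}$ together with the determinant identity is one of the standard routes to this fact. The only cosmetic point to tidy is that $T$ is assumed to be just a continuous map, not necessarily invertible, so Oseledets gives a filtration rather than a splitting $\RR^d = \bigoplus_j E_j(x)$; but you already switch to the filtration language in your final paragraph, and intersecting the ambient Oseledets filtration with the constant invariant subspace $V_{d-1}$ works exactly as you describe.
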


\begin{proof}
See Lemma~6.2 in \cite{JPS} (as mentioned there, the argument applies to any triangular cocycle 
over a compact metric space).\footnote{As we mentioned in Remark~\ref{r.oseledets_trick},
Oseledets \cite{Oseledets} reduced the proof of his theorem to the triangular case.}
\end{proof}

\begin{proof}[Proof of Proposition~\ref{p.Lyap_in_flag}]
We fix a continuous map $T \colon X \to X$ of a compact metric space $X$,
a vector bundle $V$ of rank $d$ over $X$ endowed with a Riemannian metric,
and a vector bundle morphism $S \colon V \to V$ over $T$.
that is invertible in each fiber.

First we will prove the corresponding statement of Proposition~\ref{p.Lyap_in_flag} for oriented flags.
Let $\pi \colon \ordF V \to X$ be the bundle projection.
Consider the vector bundle $W$ over $\ordF V$ whose fiber $W_\xi$ over $\xi \in \cF V$ 
is the tangent space of the flag manifold $\ordF(V_{\pi(\xi)})$ at $\xi$.
Using the canonical frame field explained in \S~\ref{ss.flag_derivative},
this vector bundle can be trivialized as $W = (\ordF V) \times \RR^{d(d-1)/2}$.
The derivative of $\ordF S \colon \ordF V \to \ordF V$ induces a vector bundle automorphism
$U \colon W \to W$, which under the trivializing coordinates has a generator 
$A \colon X \to \GL({d(d-1)/2},\RR)$ taking values on upper triangular matrices,
with diagonals given by expressions as \eqref{e.diagonal_quotients}.
Applying Proposition~\ref{p.triangular_cocycle}, the desired result follows. 

The case of non-oriented flags follows easily from the oriented case and the following observation:
any $\cF S$-invariant ergodic probability on $\cF V$ can be lifted to 
a $\ordF S$-invariant ergodic probability on $\ordF V$.
\end{proof}

\subsection{Products of triangular matrices} \label{ss.triprod}

Proposition~\ref{p.triangular_cocycle}
indicates that off-diagonal entries of
``random'' products of triangular matrices are dominated by the diagonals.
We will also need the following simple deterministic version of this fact:

\begin{lemm}[Products of triangular matrices]\label{l.off-diagonal}
Given numbers $d \in \NN^*$, $C>1$, $\lambda \in \RR$, and $\eta>0$,
there exists $N \in \NN^*$ with the following property: 
If $R(0)$, $R(1)$, \dots is a sequence of 
upper triangular matrices whose entries satisfy the bounds
\begin{align*}
|R_{i,j}(n)| &\le C,  \\
C^{-1} \le |R_{i,i} (n)| &\le e^\lambda.
\end{align*}
Then, for every $n \ge N$, 
$$
\| R(n-1) \cdots R(0) \| \le e^{(\lambda + \eta)n} \, .
$$
\end{lemm}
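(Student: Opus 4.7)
My plan is to expand each entry of $P(n) := R(n-1) R(n-2) \cdots R(0)$ as a sum over ``paths'' of indices, and then exploit upper-triangularity to severely restrict which paths can contribute. For fixed $i \le j$,
$$
P_{i,j}(n) = \sum_{k_1, \dots, k_{n-1}} R(n-1)_{i, k_{n-1}} \, R(n-2)_{k_{n-1}, k_{n-2}} \cdots R(0)_{k_1, j},
$$
and since each $R(m)$ is upper triangular, the only contributing terms are those with $i = k_n \le k_{n-1} \le \cdots \le k_1 \le k_0 = j$. Let $\ell$ denote the number of \emph{strict} decreases in such a non-increasing sequence. Then $\ell \le j - i \le d - 1$, and the corresponding product consists of $\ell$ off-diagonal factors (each of modulus $\le C$) and $n - \ell$ diagonal factors (each of modulus $\le e^\lambda$).

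The key combinatorial input is that the number of non-increasing sequences from $j$ to $i$ of length $n+1$ with exactly $\ell$ strict decreases is at most $\binom{n}{\ell}$ times a constant depending only on $d$ (accounting for the drop sizes, a composition of $j - i$ into $\ell$ positive parts, of which there are at most $2^{d-1}$). Consequently,
$$
|P_{i,j}(n)| \;\le\; K(d) \sum_{\ell=0}^{d-1} \binom{n}{\ell} \, C^\ell \, e^{\lambda (n-\ell)}.
$$
Using $\binom{n}{\ell} \le n^\ell$ and treating the sign of $\lambda$ with care -- when $\lambda < 0$ one writes $e^{\lambda(n-\ell)} = e^{\lambda n} e^{-\lambda \ell} \le e^{\lambda n} e^{|\lambda|(d-1)}$ to absorb the bad factor into a constant -- I obtain
$$
|P_{i,j}(n)| \;\le\; K'(d,C,\lambda) \cdot n^{d-1} \cdot e^{\lambda n}.
$$
Summing over entries (the ambient dimension $d$ is fixed), the same bound, up to constants, holds for $\|P(n)\|$.

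To conclude, I will simply note that $n^{d-1}$ is subexponential: for any $\eta > 0$, there exists $N = N(d, C, \lambda, \eta)$ such that $K' \cdot n^{d-1} \le e^{\eta n}$ for all $n \ge N$, and hence $\|P(n)\| \le e^{(\lambda + \eta) n}$ for $n \ge N$. The whole argument is essentially a counting exercise; the decisive structural fact is upper-triangularity, which caps the number of off-diagonal factors in any path at $d-1$ and thereby confines their aggregate contribution to a polynomial factor. I do not anticipate any serious obstacle: the only place requiring attention is the bookkeeping of constants in the case $\lambda < 0$, where the off-diagonal factors actually inflate the bound on each term, but only by a factor depending on $d$, $C$, and $\lambda$, which is harmless.
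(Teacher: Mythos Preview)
Your proposal is correct and follows essentially the same approach as the paper: expand each entry of the product as a sum over non-increasing index paths, use upper-triangularity to bound the number of off-diagonal factors by $d-1$, and conclude that the entry is at most a polynomial in $n$ times $e^{\lambda n}$, which is eventually dominated by $e^{(\lambda+\eta)n}$. The only cosmetic difference is that the paper handles the sign of $\lambda$ by invoking the hypothesis $C^{-1}\le |R_{i,i}(n)|$ (which gives $e^{-\lambda}\le C$ and hence $C^m e^{\lambda(n-m)}\le C^{2m}e^{\lambda n}$), whereas you absorb $e^{|\lambda|(d-1)}$ directly into the constant; your version therefore does not actually use the lower bound on the diagonal entries.
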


\begin{proof}
Let $P(n) = R(n-1) \cdots R(0)$.
For $i\le j$, the $(i,j)$-entry of this matrix is given by
$$
R_{i,j}(n) = \sum R_{j_n, j_{n-1}}(n-1) \cdots R_{j_2,j_1}(1) R_{j_1,j_0}(0) ,
$$
where the sum is taken over all non-increasing sequences 
\begin{equation}\label{e.non-increasing}
j = j_0 \ge j_1 \ge \dots \ge j_n = i  \, .
\end{equation}
Fix one of those sequences. 
Let $m$ be the number of strict inequalities that appear in \eqref{e.non-increasing}.
Then 
$$
|R_{j_n, j_{n-1}}(n-1) \cdots R_{j_2,j_1}(1) R_{j_1,j_0}(0)| \le 
C^m e^{\lambda (n-m)} \le
C^{2m} e^{\lambda n} \, .
$$
Using that $m \le d-1$, and summing over all sequences \eqref{e.non-increasing}, we obtain
$$
| P_{i,j}(n) | \le C^{2(d-1)} \binom{n}{j-i} e^{\lambda n} \, .
$$
Since these binomial coefficients are polynomial functions of $n$,
the lemma follows.
\end{proof}

\section{Improving a periodic orbit} \label{s.proof}

In this section we prove Proposition~\ref{p.improve}.
A very rough outline of the proof can be found in \S~\ref{ss.boot};
the basic notation can be found in Figure~\ref{f.strategy}.
Let us give some extra informal explanations before the actual proof:
\begin{itemize}
\item 
The matrices in \eqref{e.big_triangular} 
that represent the derivatives are upper-triangular;
to determine the Lyapunov exponents of the new periodic orbit we only need to know the matrix diagonals.
In particular, since the ``tour and go home'' proportion of the orbit 
will be much smaller than the rest, it will be negligible for the estimation of Lyapunov exponents.

\item
Nevertheless, we still will need to estimate norms of derivatives,
since we will want to fit images of balls inside balls (recall \S~\ref{ss.boot}).
Lemma \ref{l.off-diagonal} allows us to basically disregard the off-diagonal elements.
It is important to apply Lemma~\ref{l.off-diagonal} only after multiplying together 
the derivatives along each segment of orbit provided by Lemma~\ref{l.prescribe}, 
because then the diagonal is controlled.

\item
Let $\vec{\lambda}=(\lambda_i)$ be the Lyapunov vector of the given periodic orbit.
Let $(\tilde \lambda_i)$ denote the (still to be determined) new exponents,
and let $(\chi_i)$ be the exponents along the correcting phase 
(corresponding to $h_2$ in \S~\ref{ss.boot}).
So $\tilde \lambda_i \simeq (1-\kappa_0) \lambda_i + \kappa_0 \chi_i$,
where $\kappa_0$ is the (still to be determined) approximate proportion of the correcting phase.
We want the vectors $(\tilde \lambda_i)$ and $(\lambda_i)$ to form a small angle;
so we take $\chi_i = -a \lambda_i$, for some proportionality factor $a>0$.
The largest correcting exponent we can take is the number $c$ given by Lemma~\ref{l.prescribe}.
We take $\chi_d = c$, and so we determine $a = c/|\lambda_d|$.

\item 
Let $\gamma = \gamma(\vec\lambda)$ be the least gap in the sequence $0 > \lambda_1 > \cdots > \lambda_d$,
that is,
\begin{equation}\label{e.gamma}
\gamma(\vec\lambda) := 
\min \big\{ -\lambda_1, \lambda_1-\lambda_2, \lambda_2-\lambda_3, \dots, \lambda_{d-1}-\lambda_d \big\} \, .
\end{equation}
It follows from the description of derivatives from \eqref{ss.bundle_geometry}
that the maximum expansion exponent (on $\cF M$) around the original orbit is $-\gamma(\vec \lambda)$.
Analogously, the maximum expansion exponent along the correcting phase is $\chi_d = c$.
Since we want the ball $B_2$ to have (much) smaller radius than $B_0$,
it is necessary that $(1-\kappa_0)(-\gamma) + \kappa_0 c < 0$,
that is, $\kappa_0 < \gamma/(\gamma+c)$. 
We choose 
$$
\kappa_0 := \frac{\gamma}{2C} \, , 
$$
where $C > \max(\gamma,c)$ is an upper bound for all expansions.

\item
Finally, we estimate the factor
$$
\frac{|\tilde \lambda_i|}{|\lambda_i|} \simeq (1-\kappa_0) + \kappa_0 (-a) \le 1 - a \kappa_0 
= 1 - \frac{c\gamma}{2C |\lambda_d|} \, .
$$
We choose $\tau$ as something bigger than the right-hand side, e.g.:
\begin{equation}\label{e.tau}
\tau(\vec{\lambda}) := 1 -  \frac{c}{C} \cdot 
\frac{\gamma(\vec\lambda)}{|\lambda_d|} \, ,
\end{equation}
This is a continuous projective function, as required.
\end{itemize}

Now we give the formal proof:

\begin{proof}[Proof of Proposition~\ref{p.improve}]
Suppose the set $G = \{g_0, \dots, g_{\ell-1}\} \subset \Diff^2(M)$
satisfies the assumptions (\ref{i.cond_1}), (\ref{i.cond_2}) and (\ref{i.cond_3}) 
of Theorem~\ref{t.conditions}.
Fix constants $C>c>0$, where $C$ satisfies \eqref{e.def_C} 
and $c$ is given by Lemma~\ref{l.prescribe}.
For any $\vec{\lambda} \in \cC$, let $\gamma(\vec{\lambda})$ be the ``gap'' defined by \eqref{e.gamma}.
Define the  function $\tau \colon \cC \to (0,1)$ by \eqref{e.tau}.

Now fix a periodic point $z$ of $\phi_G$ with $\vec{\lambda}(z) \in \cC$
and constants $\theta$, $\epsilon$, $\delta>0$.
For simplicity, write $(\lambda_1,\dots,\lambda_d) = \vec \lambda = \vec \lambda(z)$ and
$\gamma = \gamma(\vec{\lambda})$. 
Let $p$ be the minimal period of $z$;
so $z = (w^\infty, x^0)$ where $w$ is a word of length $p$
and $x^0 \in M$.
Let $\flag^0 = \sflag(z)$ and 
$\xi_0 = (x^0, \flag^0) \in \cF M$.
By definition,
\begin{equation}\label{e.lambda_obvious}
\lambda_i = \frac{1}{p} \log M_{i,i} \big( Dg_{[w]}(x^0) , \flag^0\big) \, .
\end{equation}

Let $\eta>0$ be a very small number; we will see along the proof how small it needs to be.
Of course, each smallness condition that will appear must involve only the objects that defined up to this point. 

Let $n_0 \in \NN^*$ be such that 
\begin{equation}\label{e.n_0}
2^{-p n_0} < \epsilon.
\end{equation}

To simplify notation, let $h_s = \cF g_s$ for each letter $s=0,\dots,\ell-1$,
and so $h_{[w]} = \cF g_{[w]}$ for each word on this alphabet.

If $\xi \in \cF M$ and $r>0$, let
$B(\xi, r) \subset \cF M$ denote the ball of center $\xi$ and radius 
$r$, with respect to the Riemannian norm on $\cF M$ explained in \S~\ref{ss.bundle_geometry}.

\begin{clai}\label{cl.norm_1st}
There exist $\rho>0$ and $n_1 \in \NN^*$
such that: 
\begin{equation}\label{e.norm_1st}
\xi \in B(\xi_0, \rho), \ j \ge n_1 \ \Rightarrow \ 
\big\| D h_{[w^j]} (\xi) \big\| \le \exp \big[( - \gamma + 3\eta) pj \big] \, .
\end{equation}
In particular,
\begin{equation}\label{e.invariance}
h_{[w^{n_1}]} \big( B (\xi_0,\rho) \big) \subset B(\xi_0,\rho) \, .
\end{equation}
\end{clai}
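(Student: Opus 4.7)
The plan is to exploit the triangular structure of derivatives on the flag bundle developed in Section~\ref{s.triang}, together with Lemma~\ref{l.off-diagonal} on products of upper triangular matrices. The key observation is that, when expressed in the canonical (up-to-sign) frames of $\cF M$ at $\xi$ and $h_{[w]}(\xi)$, the matrix representing $Dh_{[w]}(\xi)$ is upper triangular, and at the fixed point $\xi_0$ its diagonal entries are, by \eqref{e.big_diagonal} applied to $g_{[w]}$ together with \eqref{e.lambda_obvious}, the numbers $e^{p(\lambda_i-\lambda_j)}$ (for $i>j$) and $e^{p\lambda_k}$. By the definition of the gap $\gamma$, all of these have absolute value at most $e^{-p\gamma}$.

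First I would use continuity to pick $\rho_0>0$ small enough that for every $\xi\in B(\xi_0,\rho_0)$, the diagonal entries of $Dh_{[w]}(\xi)$ in canonical frames have absolute value at most $e^{(-\gamma+\eta)p}$. Let $C>1$ bound, uniformly over $\cF M$, the canonical-frame entries of $Dh_{[w]}$ and the reciprocals of its diagonal entries (such a $C$ exists by compactness). Lemma~\ref{l.off-diagonal}, applied with these bounds and with the perturbation parameter $\eta':=\eta p$, then yields an integer $n_1\in\NN^*$ such that any product of $n\ge n_1$ upper triangular matrices satisfying the stated bounds has norm at most $e^{(-\gamma+2\eta)pn}$.

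The main obstacle is ensuring that, for $\xi\in B(\xi_0,\rho)$, every forward iterate $h_{[w^k]}(\xi)$ remains inside $B(\xi_0,\rho_0)$, where the diagonal bound is available. For this I would invoke the fact that $\xi_0$ is a hyperbolic attracting fixed point of $h_{[w]}$: its linearization is upper triangular with spectral radius $e^{-p\gamma}<1$. Via the standard adapted-inner-product construction, there is an inner product $\langle\cdot,\cdot\rangle_*$ near $\xi_0$ in which $h_{[w]}$ is a strict contraction on a small ball, and by equivalence of norms I can choose $\rho>0$ such that $B(\xi_0,\rho)$ is contained in an $h_{[w]}$-forward-invariant open set lying inside $B(\xi_0,\rho_0)$. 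With this invariance in hand, for $\xi\in B(\xi_0,\rho)$ and $j\ge n_1$, setting $R(k):=Dh_{[w]}(h_{[w^k]}(\xi))$ (in canonical frames) yields the product formula $Dh_{[w^j]}(\xi)=R(j-1)\cdots R(0)$; the matrices $R(k)$ then satisfy the hypotheses of Lemma~\ref{l.off-diagonal}, producing the bound \eqref{e.norm_1st} with room to spare, since $(-\gamma+2\eta)pj\le(-\gamma+3\eta)pj$.

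Finally, \eqref{e.invariance} will follow from \eqref{e.norm_1st} evaluated at $j=n_1$: assuming without loss of generality that $\eta<\gamma/3$, so that $e^{(-\gamma+3\eta)pn_1}<1$, and shrinking $\rho$ below the injectivity radius of $\cF M$ at $\xi_0$, the mean-value inequality along a minimising geodesic from $\xi_0$ to $\xi$ gives $d(h_{[w^{n_1}]}(\xi),\xi_0)=d(h_{[w^{n_1}]}(\xi),h_{[w^{n_1}]}(\xi_0))\le e^{(-\gamma+3\eta)pn_1}\rho<\rho$, as desired.
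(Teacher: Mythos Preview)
Your argument is correct, but the paper proceeds more directly. Instead of invoking the triangular structure and Lemma~\ref{l.off-diagonal}, the paper simply uses the spectral radius formula: since the spectral radius of $Dh_{[w]}(\xi_0)$ equals $e^{-\gamma p}$, there exists $j_0$ with $\|Dh_{[w^{j_0}]}(\xi_0)\| \le e^{(-\gamma+\eta)pj_0}$; by continuity this extends (with $2\eta$) to a ball $B(\xi_0,\rho)$, which is then automatically invariant under $h_{[w^{j_0}]}$. For general $j$ one writes $j = j_0\lfloor j/j_0\rfloor + r$ and uses submultiplicativity, controlling the leftover $r<j_0$ iterates by the crude bound $C_1 = \sup_{\cF M}\|Dh_{[w]}\|$. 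This avoids both Lemma~\ref{l.off-diagonal} and the separate adapted-norm argument you invoke to secure forward invariance of every $h_{[w]}$-iterate; the paper only needs invariance under $h_{[w^{j_0}]}$, which falls out of the derivative bound itself. Your route is thematically aligned with the rest of Section~\ref{s.proof}, where Lemma~\ref{l.off-diagonal} is genuinely needed (there the matrices vary along a non-periodic orbit and no single spectral-radius argument is available), but here the fixed-point structure makes the elementary approach shorter.
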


\begin{proof}[Proof of the claim]
The map $h_{[w]}$ has $\xi_0$ as an attracting fixed point.
Recalling \eqref{e.big_diagonal}, we see that
the moduli of the eigenvalues of $Dh_{[w]}(\xi_0)$ are the numbers
$$
e^{\lambda_k p } \quad \text{and} \quad
e^{(\lambda_i - \lambda_j)p}
\quad \text{where $1 \le k \le d$ and $1 \le j < i \le d$.}
$$
In particular, the spectral radius of $Dh_{[w]}(\xi_0)$ is $e^{- \gamma p}$.
Therefore there is $j_0 \in \NN^*$ such that 
$$
\|Dh_{[w^{j_0}]}(\xi_0)\| \le \exp\big[ (-\gamma +\eta) p j_0 \big].
$$
By continuity of $Dh_{[w]}$, there is $\rho >0$ such that
$$
\xi \in B(\xi_0,\rho) \ \Rightarrow \ 
\big\|Dh_{[w^{j_0}]}(\xi)\big\| \le \exp\big[ (-\gamma + 2\eta) p j_0 \big].
$$
The right-hand side is less than $1$ (because $\eta$ is very small);
in particular, $h_{[w^{j_0}]}$ maps $B(\xi_0,\rho)$ into itself.
Let $C_1 := \sup_{\cF M} \|Dh_{[w]}\| \ge 1$.
It follows from submultiplicativity of norms that for every $j\ge j_0$ and $\xi \in B$, 
we have
\begin{align*}
\big\|Dh_{[w^j]} (\xi)\big\| 
&\le C_1^{j - j_0 \lfloor j/j_0 \rfloor} \exp \big[ (-\gamma + 2\eta) p j_0 \lfloor j/j_0 \rfloor \big] \\
&\le C_1^{j_0-1} \exp \big[ (-\gamma + 2\eta) p (j-j_0+1) \big] .
\end{align*}
For sufficiently large $j$, the right-hand side is less than $\exp\big[ (-\gamma + 3\eta) p j \big]$,
as we wanted to show.
\end{proof}

By continuity, we can reduce $\rho$ so that it has the following
additional properties:
\begin{equation}\label{e.drift_epsilon} 
h_{[\hat{w}]} (B(\xi_0,\rho)) \subset B\big(h_{[\hat{w}]}(\xi_0),\epsilon\big) \quad 
\text{for any word $\hat{w}$ of length $\le p n_1$.}
\end{equation}
and
\begin{equation}\label{e.diag_1st}
e^{-\eta} < \frac{M_{ii}\left(Dg_{[w]}(x),\flag \right)}{M_{ii}\left(Dg_{[w]}(x^0), \flag^0 \right)} < e^\eta \quad \forall (x,\flag) \in B(\xi_0, \rho), \ \forall i=1,\dots, d. 
\end{equation}

Define
\begin{equation}\label{e.chi}
\chi_i := \frac{c |\lambda_i|}{|\lambda_d|} \, .
\end{equation}
Notice that the vectors $(\chi_i)$ and $(\lambda_i)$ are collinear, and that
\begin{equation}\label{e.ordered_and_bounded}
0 < \chi_1 < \chi_2 < \cdots < \chi_d = c \, .
\end{equation}

Using Lemma~\ref{l.prescribe}, we find 
$q^* \in \NN^*$ associated to the precision $\eta$.
We inductively define a sequence 
$w^*_j$ of words of length $q^*$ 
as follows:
Assume $w^*_0$, \dots, $w^*_{j-1}$
were already defined.
Let $\xi_j^* = (x_j^*,\flag_j^*) =  h_{[w_{j-1}^* \cdots w_0^*]}(\xi_0)$.
Since $|\chi_i| \le c$, we can apply Lemma~\ref{l.prescribe} 
and select a word $w^*_j$ of length $q^*$ such that
\begin{equation}\label{e.adjust_concl}
\left| \frac{1}{q^*} \, \log M_{i,i} \left(Dg_{[w^*_j]} (x_j^*) , \flag_j^*\right) - \chi_i  \right| < \eta
\quad \text{for all $i=1,\dots, d$.}
\end{equation}

For any $j$, 
consider the (up-to-sign) matrix of $Dh_{[w_j]}(\xi^*_j)$ with respect to the canonical frames,
as explained in \S~\ref{ss.bundle_geometry}.
The diagonal entries are well-defined.
By \eqref{e.big_triangular}, 
and using \eqref{e.adjust_concl} and \eqref{e.ordered_and_bounded},
we see that all these diagonal entries are less than $e^{(c + 2\eta)q}$.
We apply Lemma~\ref{l.off-diagonal} and find $N \in \NN^*$ 
such that 
\begin{equation}\label{e.norm_block_0}
\big\| Dh_{[w^*_j w^*_{j+1} \dots w^*_{j+N-1}]}(\xi^*_j) \big\| \le 
\exp\left[\big( c + 3\eta \big) q^*N  \right] \quad \text{for each $j\ge 0$}\, .
\end{equation}

Let us concatenate the words $w^*_0$, $w^*_1$, \dots \  in blocks of $N$ words, thus forming a sequence
$w_0$, $w_1$, \dots \ of words of length $q := q^* N$:
$$
\underbrace{w^*_0 \dots w^*_{N-1}}_{w_0} \underbrace{w^*_N \dots w^*_{2N-1}}_{w_1} \underbrace{w^*_{2N} \dots w^*_{3N-1}}_{w_2}\dots
$$
So \eqref{e.adjust_concl} gives
\begin{equation}\label{e.adjust_2nd}
\left| \frac{1}{q} \, \log M_{i,i} \left(Dg_{[w_{Nj}]} (x_{Nj}^*) , \flag_{Nj}^*\right) - \chi_i  \right| < \eta
\quad \text{for all $i=1,\dots, d$.}
\end{equation}
Also, defining $\xi_j := \xi^*_{Nj}$,
\eqref{e.norm_block_0} gives
\begin{equation}\label{e.norm_2nd}
\big\| Dh_{[w_j]}(\xi_j) \big\| \le 
\exp\left[\big( c + 3\eta \big) q \right] \quad \text{for each $j\ge 0$} \, .
\end{equation}

We apply Lemma~\ref{l.group_tour_and_go_home} to the set $U = B(\xi_0,\rho)$
and find numbers $\rho'>0$ and $k_1\in \NN$ with the following properties:
For every ball $B' \subset \cF M$ of radius $\rho'$,
there exists a word $w' = s_0 s_1 \dots s_{k-1}$ 
of length $k \le k_1$ 
such that:
\begin{equation} \label{e.group_tour}
\left.
\begin{array}{l}
\text{for every $(\omega,x) \in \llbracket ; s_0 s_1 \dots s_{k-1} \rrbracket \times B'$,} \\
\text{the segment of orbit $\phi_H^{[0,k]} (\omega,x)$ is $\delta$-dense in $\ell^\ZZ \times \cF M$}
\end{array}
\right\}
\end{equation}
and
\begin{equation}\label{e.go_home}
h_{[w']}(B') \subset B(\xi_0,\rho).
\end{equation}

Using that the maps in $G$ are $C^2$,
we reduce $\rho'$ if necessary so that it has the following additional property:
if $\xi'=(x',\flag')$ and $\xi''=(x'',\flag'') \in \cF M$ are $\rho'$-close 
then for any word $\hat{w}$ of length $q$, 
\begin{align}
e^{-\eta} &< 
\frac{\big\| D h_{[\hat{w}]} (\xi') \big\|}{\big\| D h_{[\hat{w}]} (\xi'') \big\|}
< e^{\eta} \, , \label{e.unif_cont_norms} \\
e^{-\eta} &< \frac{M_{ii}\left(Dg_{[\hat{w}]}(x'),\flag' \right)}{M_{ii}\left(Dg_{[\hat{w}]}(x''), \flag'' \right)} < e^\eta 
\text{ for each $i=1,\dots,d$.} \label{e.diag_2nd}
\end{align}

Define
\begin{align}
\kappa_0 &:= \frac{\gamma}{2C} \, ,   \label{e.kappa}  \\
-\beta   &:= (1-\kappa_0)(-\gamma) + \kappa_0 c   \, . \label{e.beta}
\end{align}
Since $c$ and $\gamma$ are less than $C$, we have
$$
\beta =  \left(1 - \frac{\gamma+c}{2C} \right) \gamma > 0.
$$

Now we choose large integers $n$, $m$ with the following properties:
\begin{gather}
n \ge n_1                                                \, , \label{e.cond_n_1}\\
\kappa_0 - \eta < \frac{qm}{pn+qm} < \kappa_0            \, , \label{e.proportion}\\
\frac{\max(p,n_0,k_1)}{pn+qm} < \eta                     \, , \label{e.short}\\
\rho \exp\left[ -\frac{\beta}{2}(pn+qm) \right]  < \rho' \, . \label{e.radius} 
\end{gather}
Let 
$$
r_j := \rho \exp \left[ (-\gamma + 3\eta)pn \right] \exp \left[ ( c + 4\eta ) q j \right],
\quad \text{for $j=0,1,\dots,m$.}
$$
It follows from \eqref{e.cond_n_1} and \eqref{e.norm_1st} that
\begin{equation}\label{e.nested_first}
h_{[w^n]}\big( B(\xi_0, \rho) \big) \subset B(\xi_0, r_0).
\end{equation}

\begin{clai}\label{cl.radii_bound}
$r_j < \rho'$ for each $j=0,1,\dots,m$.
\end{clai}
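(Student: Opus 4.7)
The plan is to observe that the sequence $(r_j)$ is increasing in $j$, and then to bound its last term $r_m$ using the radius condition \eqref{e.radius}. Since $c>0$ and $\eta>0$, the factor $\exp[(c+4\eta)qj]$ is strictly increasing in $j$, so $r_0 \le r_1 \le \cdots \le r_m$, and it suffices to prove $r_m < \rho'$.

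Write $L := pn + qm$ for the total length. Using $pn = L - qm$, I would rewrite the exponent of $r_m/\rho$ as
\[
(-\gamma + 3\eta)\, pn + (c + 4\eta)\, qm
= -\gamma L + (c+\gamma)\, qm + \eta\,(3pn + 4qm).
\]
For the principal term, the upper proportion bound $qm/L < \kappa_0$ from \eqref{e.proportion}, combined with the positivity of $c+\gamma$, gives
\[
-\gamma L + (c+\gamma)\, qm \;<\; L\big[-\gamma + (c+\gamma)\kappa_0\big] \;=\; -\beta L,
\]
where the final equality is exactly the definition \eqref{e.beta} of $\beta$. The error term is crudely bounded by $\eta(3pn+4qm) \le 4\eta L$. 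Combining, the exponent of $r_m/\rho$ is strictly less than $(-\beta + 4\eta)L$.

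This is where the ``$\eta$ very small'' convention enters: provided the smallness condition $\eta < \beta/8$ has been imposed on $\eta$ at the start of the proof—which is allowed since $\beta$ depends only on the already-fixed data $\gamma$, $c$, $C$—we obtain
\[
r_m \;<\; \rho \exp\!\big[-\tfrac{\beta}{2}(pn+qm)\big],
\]
and the radius condition \eqref{e.radius} then yields $r_m < \rho'$, as desired. There is no real obstacle in this claim; it is essentially a bookkeeping check that the proportion $\kappa_0$, the exponential rate $\beta$, and the integers $n$, $m$ have been chosen precisely so that the iterated images $h_{[w_{j-1}\cdots w_0]} \circ h_{[w^n]}(B(\xi_0,\rho))$ remain well inside the $\rho'$-scale where the uniform continuity estimates \eqref{e.unif_cont_norms}–\eqref{e.diag_2nd} are available.
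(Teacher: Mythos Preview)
Your proof is correct and follows essentially the same approach as the paper: reduce to $r_m$ by monotonicity, use the proportion bound $qm/(pn+qm) < \kappa_0$ from \eqref{e.proportion} to show the exponent is less than $(-\beta + O(\eta))(pn+qm)$, and then invoke \eqref{e.radius}. Your error constant $4\eta$ is in fact slightly sharper than the paper's $8\eta$, so your smallness requirement $\eta < \beta/8$ is a bit milder; otherwise the arguments are identical.
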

Before proving this claim, notice 
that together with \eqref{e.norm_2nd} and \eqref{e.unif_cont_norms}, it implies that
\begin{equation}\label{e.nested}
h_{[w_j]} \big( B(\xi_j, r_j) \big) 
\subset  B \big( h_{[w_j]}(\xi_{j}), e^{( c + 4\eta ) q } r_j \big) 
=  B(\xi_{j+1}, r_{j+1}) \, . 
\end{equation}

\begin{proof}[Proof of the Claim~\ref{cl.radii_bound}]
Since $r_0 \le r_1 \le \cdots \le r_m$, we only need to estimate $r_m$.
It follows from the second inequality in \eqref{e.proportion} that
$$
r_m \le \rho\exp\Big[ \big( \underbrace{(1-\kappa_0)(-\gamma) + \kappa_0 c}_{-\beta} + 8\eta \big) (pn+qm) \big] \, . 
$$
So, imposing $\eta < \beta/14$ (which is allowed), the claim follows from \eqref{e.radius}.
\end{proof}

Let $B':=B(\xi_m,\rho')$ and find a corresponding word $w' = s_0 s_1 \dots s_{k-1}$ 
of length $k \le k_1$ 
with the properties ``group tour'' \eqref{e.group_tour} and ``go home'' \eqref{e.go_home}.

Consider the following word of length $pn + qm + k$:
\begin{equation*}
\tilde w = w^n w_0 \cdots w_{m-1} w' \, .
\end{equation*}
It follows from properties \eqref{e.nested_first}, \eqref{e.nested}, and \eqref{e.go_home} that
$$
h_{[\tilde w]} \big( B(\xi_0,\rho) \big) \subset B(\xi_0,\rho) .
$$
So $h_{[\tilde w]}$ has a fixed point $\tilde \xi = (\tilde x, \tilde \flag)$
inside $B(\xi_0,\rho)$. 
In particular, $\tilde z := (\tilde w^\infty, \tilde x)$ is a periodic point for $\phi_G$,
with period $pn + qm + k$.

This concludes the construction of the ``improved'' periodic orbit.
The rest of the proof consists of checking 
that this orbit has the desired properties (\ref{i.p.1}), (\ref{i.p.2}),  (\ref{i.p.3}).

\medskip\noindent\textit{Verifying property (\ref{i.p.1}).} 
Consider the (periodic) orbit of $(\tilde z, \tilde \flag)$ under $\cF \phi_G$:
$$
(\sigma^j(\tilde w^\infty), \tilde x_j, \tilde \flag_j) := (\cF\phi_G)^j(\tilde w^\infty, \tilde x, \tilde \flag), \quad \text{where $j \in \ZZ$.}
$$
We must estimate the Lyapunov vector 
$\vec \lambda(\tilde z) = (\tilde \lambda_1, \dots, \tilde \lambda_d)$.
For each $i=1,\dots,d$, we have
$$
\tilde \lambda_i = \frac{\log M_{ii}(Dg_{[\tilde w]}(\tilde x) , \tilde \flag)}{pn+qm+k} =
\frac{1}{pn+qm+k}\left[\sum_{j=0}^{n-1}(\mathrm{I})_j + \sum_{j=0}^{m-1}(\mathrm{II})_j + (\mathrm{III})\right],
$$
where
\begin{align*}
(\mathrm{I})_j &= 
\log M_{ii} \big( Dg_{[w]}(\tilde x_{jn}) , \tilde \flag_{jn} \big) \, , \\
(\mathrm{II})_j &=
\log M_{ii} \big(Dg_{[w_j]}(\tilde x_{pn+qj}) , \tilde \flag_{pn+qj}\big) \, , \\
(\mathrm{III}) &= \log M_{ii}\big(Dg_{[w']}(\tilde x_{pn+qm}) , \tilde \flag_{pn+qm}\big) \, .
\end{align*}
We have
\begin{alignat*}{2}
\left| \frac{(\mathrm{I})_j}{p} - \lambda_i \right| &\le \eta 
&\qquad&\text{(by \eqref{e.lambda_obvious} and \eqref{e.diag_1st}),} \\ 
\left| \frac{(\mathrm{II})_j}{q} - \chi_i \right| &\le 4\eta 
&\qquad&\text{(by \eqref{e.adjust_2nd} and \eqref{e.diag_2nd}),} \\ 
\left| \frac{(\mathrm{III})}{k}  \right| &\le C
&\qquad&\text{(by \eqref{e.def_C}).} 
\end{alignat*}
Using these estimates together with \eqref{e.proportion}, \eqref{e.short},
we obtain
\begin{align}
\tilde \lambda_i
&= 
\underbrace{\frac{pn}{pn+qm+k}}_{1 - \kappa_0 + O(\eta)} 
\underbrace{\frac{\sum (\mathrm{I})_j}{pn}}_{\lambda_i + O(\eta)} +
\underbrace{\frac{qm}{pn+qm+k}}_{\kappa_0 + O(\eta)} 
\underbrace{\frac{\sum (\mathrm{II})_j}{qm}}_{\chi_i + O(\eta)}  +
\underbrace{\frac{k}{pn+qm+k}}_{O(\eta)} 
\underbrace{\frac{(\mathrm{III})}{k}}_{O(1)}  \notag \\
&= (1-\kappa_0)\lambda_i + \kappa_0\chi_i + O(\eta). \label{e.tilde_lambda}
\end{align}
We have
\begin{multline*}
\tilde\lambda_{i-1} - \tilde\lambda_i 
=   (1-\kappa_0)(\lambda_{i-1}-\lambda_i) + \kappa_0(\chi_{i-1}-\chi_i) + O(\eta) \\
\ge (1-\kappa_0)\gamma - \kappa_0 c + O(\eta)
\ge \beta + O(\eta) > \beta/2 > 0,
\end{multline*}
taking $\eta$ small enough.
A similar calculation gives $-\tilde\lambda_1 > \beta/2 >0$.
In particular, $(\tilde \lambda_i)$ belongs to the cone $\cC$, as required.
Moreover, using \eqref{e.kappa},
$$
(1-\kappa_0)\lambda_i + \kappa_0\chi_i > \lambda_i + \kappa_0 \chi_i =  
\left[ 1 - \frac{c}{2C} \cdot \frac{\gamma}{|\lambda_d|} \right] \lambda_i \, .
$$
The quantity between square brackets is positive and strictly less than
the number $\tau(\vec\lambda)$ defined in \eqref{e.tau};
therefore, taking $\eta$ small, \eqref{e.tilde_lambda} guarantees that
$$
|\tilde \lambda_i| < \tau(\vec \lambda) |\lambda_i| \, .
$$
This gives the desired norm inequality in \eqref{e.angle_and_norm}.
Notice that the vector $(\tilde \lambda_i)$ is not too close to zero; indeed, 
$|\tilde \lambda_i| \ge \beta/2$.
So, taking $\eta$ small enough, 
the angle inequality in \eqref{e.angle_and_norm} follows from \eqref{e.tilde_lambda}
and the fact that the vectors $(\chi_i)$ and $(\lambda_i)$ are collinear.
We have checked part~(\ref{i.p.1}) of the proposition.

\medskip\noindent\textit{Verifying property (\ref{i.p.2}).} 
Write the original periodic orbit 
in $\ell^\ZZ \times \cF M$ as 
$$
(\sigma^j(w^\infty), x_j^0, \flag_j^0) := (\cF\phi_G)^j(w^\infty, x^0, \flag^0), \quad \text{where $j \in \ZZ$.}
$$
Since $(\tilde x_0, \tilde \flag_0)$ is inside the ball $B(\xi_0,\rho)$,
by the invariance condition \eqref{e.invariance} 
the points $(\tilde x_{jpn_1}, \tilde \flag_{jpn_1})$ with $j=0,1,\dots,n/n_1$
are also inside the ball.
So, it follows from \eqref{e.drift_epsilon} that
$$
d \big( (\tilde x_j, \tilde \flag_j),  (\tilde x^0_j, \tilde \flag^0_j) \big) < \epsilon 
\quad \text{for all $j$ with $0 \le j \le pn$.}
$$
So it follows from \eqref{e.n_0} that 
$$
d \big( (\sigma^j(\tilde w^\infty), \tilde x_j, \tilde \flag_j),  (\sigma^j(w^\infty), \tilde x^0_j, \tilde \flag^0_j) \big) < \epsilon 
\quad \text{for all $j$ with $n_0 \le j \le pn - n_0$.}
$$
Therefore the orbit of $(\tilde w^\infty, \tilde x, \tilde \flag)$ $\epsilon$-shadows 
the orbit of $(w^\infty, x^0, \flag^0)$ during a proportion 
$$
\frac{pn-2n_0-p}{pn+qm+k}
$$
of the time.
It follows from \eqref{e.proportion} and \eqref{e.short}
(taking $\eta$ small) that this proportion is greater than $1-2\kappa_0$.
So let $\kappa := 2\kappa_0 = \gamma/C$.
Notice that $\gamma < |\lambda_d| \le \| \vec \lambda \|$, so $\kappa < \max(1,\|\vec \lambda\|)$,
as required.
We have checked part~(\ref{i.p.2}) of the proposition.

\medskip\noindent\textit{Verifying property (\ref{i.p.3}).} 
Since for $j = np+qm$, the point 
$(\sigma^j(\tilde w^\infty), \tilde x_j, \tilde \flag_j)$
belongs to $\llbracket ; s_0 s_1 \dots s_{k-1} \rrbracket \times B'$;
therefore property \eqref{e.group_tour} assures that the first $k$ iterates of this point 
form a $\delta$-dense subset of $\ell^\ZZ \times \cF M$.
We have checked the last part of Proposition~\ref{p.improve}.
The proof is completed.
\end{proof}

\begin{rema}
As already mentioned in Remark~\ref{r.C2}, our proof uses $C^2$ regularity;
the precise places where we need it are \eqref{e.norm_1st} and \eqref{e.unif_cont_norms}.
(Apart from that, we will need $C^2$ regularity again in Section~\ref{s.open}.)
\end{rema}

\section{Construction of the open set of IFS's}\label{s.open}

The aim of this section is to show the existence of iterated function systems 
that $C^2$-robustly satisfy the hypotheses of Theorem~\ref{t.conditions}, i.e., to prove the following:

\begin{prop}\label{p.open_conditions}
Let $M$ be a compact connected manifold. 
There is $\ell \in \NN^*$ and a nonempty open set $\cG_0 \subset (\Diff^2(M))^\ell$ such that 
every $G\in\cG_0$ satisfies conditions {\rm (\ref{i.cond_1})}, {\rm (\ref{i.cond_2})} and {\rm (\ref{i.cond_3})}
of Theorem~\ref{t.conditions}.
\end{prop}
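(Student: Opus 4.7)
The plan is to treat the three conditions of Theorem~\ref{t.conditions} separately, proving openness of each under $C^1$-perturbations (which implies $C^2$-openness), and then exhibiting one concrete $G$ satisfying all three.

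\textbf{Easy openness: maneuverability and condition (c).} Both are manifestly $C^1$-open. For maneuverability, if $G$ satisfies the property then by continuity of $(x,\flag,g) \mapsto M_{i,i}(Dg(x),\flag)$ and compactness of $\cF M \times \{-1,+1\}^d$, there exists $c>0$ such that for every $(x,\flag)$ and every sign vector $t$ there is $g \in G$ with $t_i \log M_{i,i}(Dg(x),\flag) > c$ for all $i$; any $G' \in (\Diff^1(M))^\ell$ that is $C^1$-close to $G$ inherits the strict inequalities. For condition (c), a hyperbolic fixed point with simple real eigenvalues of distinct moduli persists under small $C^1$-perturbations of the map (and hence of its constituent generators in $\langle G \rangle$) via the implicit function theorem.

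\textbf{Openness of positive minimality on $\cF M$.} This is the delicate point. Since minimality is not generically open, I would use a stronger \emph{uniform covering} criterion: there exist $\rho > 0$ and $N \in \NN$ such that for every ball $B \subset \cF M$ of radius $\rho$, the union $\bigcup_{|w| \le N} \cF g_{[w]}(B)$ equals $\cF M$. By compactness of $\cF M$ and continuity of $D g_s$, this covering condition is clearly $C^1$-open, and by a standard pigeonhole argument it implies that every forward orbit of $\cF G$ is dense (hence positive minimality). One reduces checking the criterion to finitely many balls by a Lebesgue-number argument.

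\textbf{Construction of one such $G$.} I would build the generators in two stages. First, pick a single $g_0 \in \Diff^2(M)$ with an attracting hyperbolic fixed point $x_0$ whose derivative (after conjugation in a chart) is a diagonal matrix with negative entries $-\lambda_1, \ldots, -\lambda_d$ of pairwise distinct moduli, all less than $1$; this handles condition (c). Second, add further generators supported in small charts around a dense net of points in $M$, chosen so that (i) their derivatives, computed in those charts, realize each of the $2^d$ sign patterns of the diagonal entries of $M(\cdot, \flag)$ for every flag $\flag$ — this follows since diagonal linear maps in $\GL(d,\RR)$ with prescribed signs of log-entries produce every sign pattern after a change of basis, and using enough such linear models around every point of a fine net on $M$ gives maneuverability at every $(x,\flag) \in \cF M$; and (ii) their compositions include maps whose $\cF$-lifts act transitively enough on $\cF M$ to verify the uniform covering condition. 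For (ii), one can include rotations/translations in charts combined with derivative-prescribed pieces so that, starting from any small ball in $\cF M$, finitely many compositions cover $\cF M$; the number $\ell$ of generators required grows with the complexity of $M$ and $d$.

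\textbf{Main obstacle.} The crux is the robust positive minimality on the flag manifold, because it couples the base dynamics on $M$ with the fiber dynamics of the derivatives on flags. Choosing generators that simultaneously (a) provide all the sign patterns demanded by maneuverability at every $(x,\flag)$, and (b) produce a uniform covering of $\cF M$ from every small ball in a bounded number of steps, requires combining many carefully positioned local models; verifying that the resulting finite list of generators indeed achieves uniform covering — which is what makes the whole setup $C^1$-open — is the step that requires the most work and justifies the large value of $\ell$ in the statement.
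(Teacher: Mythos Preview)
Your treatment of conditions~(b) and~(c) is fine and matches the paper. The paper also makes your life easier by observing that one may concatenate three separate IFS's, one per condition, rather than building a single $G$ satisfying all three at once; this decouples the construction completely and avoids the vague ``two stages'' you sketch.

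The real gap is in your handling of condition~(a). Your uniform covering criterion --- that $\bigcup_{|w|\le N}\cF g_{[w]}(B)=\cF M$ for every $\rho$-ball $B$ --- does \emph{not} imply that every forward $\cF G$-orbit is dense. Unwinding the condition: for any point $y$ and any $\rho$-ball $B$, there is a word $w$ and some $x'\in B$ with $\cF g_{[w]}(x')=y$; equivalently, $(\cF g_{[w]})^{-1}(y)\in B$. This says every \emph{backward} orbit is $\rho$-dense (in fact enters every $\rho$-ball within $N$ steps), which is positive minimality for the inverse IFS, not for $\cF G$. There is no ``standard pigeonhole argument'' that closes this; if you try to run the argument forward from a point $x$, you only learn that some $x'$ near $x$ reaches the target, and the Lipschitz constants of the $\cF g_{[w]}$ are larger than $1$, so you cannot bootstrap. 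Even the preimage version $\bigcup_{|w|\le N}(\cF g_{[w]})^{-1}(B)=\cF M$ would only give $\rho$-density of forward orbits, not full density.

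The paper's fix is precisely to supply a contraction mechanism that upgrades $\delta$-density to density. Its minimality criterion (Lemma~\ref{l.minimality}) asks for a finite open cover $\{V_i\}$ of $\cF M$ together with maps $h_i\in\langle \cF G\rangle$ that contract on $h_i^{-1}(V_i)$, plus the hypothesis that every orbit is already $\delta$-dense for a Lebesgue number $\delta$ of the cover. Then from any point one first lands $\delta$-close to a target, applies the appropriate contraction to land $\alpha\delta$-close, and iterates; this bootstraps to full density. Both ingredients are $C^1$-robust, and the construction is easy: take generators with hyperbolic attracting fixed points on $\cF M$ (i.e., attracting fixed points on $M$ with eigenvalues of distinct moduli) so that the images of their basins cover $\cF M$, and add further generators whose $\cF$-lifts move any point into a $\delta$-net.
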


To begin, notice that if $G_0=(g_0,\dots,g_{\ell_0-1})$, $G_1=(g_{\ell_0},\dots,g_{\ell_0+\ell_1-1})$
and $G_2=(g_{\ell_0+\ell_1},\dots,g_{\ell_0+\ell_1+\ell_2-1})$ 
respectively satisfy conditions (\ref{i.cond_1}), (\ref{i.cond_2}) and (\ref{i.cond_3}),
then $G=(g_0,\dots, g_{\ell-1})$ with $\ell=\ell_0+\ell_1+\ell_2$ satisfies all three conditions. 
Therefore, to prove Proposition~\ref{p.open_conditions}, 
one can prove independently the existence of open sets satisfying each of the three conditions. 

Condition~(\ref{i.cond_3}) is trivially nonempty and $C^2$-open (actually $C^1$-open).

\begin{proof}[Proof that the maneuverability condition~{\rm (\ref{i.cond_2})} 
is nonempty and open.]
This is an easy compactness argument, but let us spell out the details for the reader's convenience.

Let $t=(t_1,\dots,t_d)\in\{-1,+1\}^d$, where $d=\dim M$.
For every $(x,\flag)\in\cF M$ there is $g\in \Diff^2(M)$ such that 
$$
t_i \, \log M_{i,i}(Dg(x),\flag) > 0 \quad \text{for each $i$.}
$$
By continuity, it follows that there is a neighborhood $U$ of $(x,\flag)$ in $\cF M$
and a neighborhood $\cV$ of $g$ in $\Diff^1(M)$ such that
such that if $(x',\flag') \in U$ and $g' \in \cV$ then 
$$
t_i \, \log M_{i,i}(Dg'(x'),\flag')  > 0 \quad \text{for each $i$.}
$$
By compactness, we can extract a finite subcover of $\cF M$ formed by neighborhoods of the type $U$,
and keep the associated $C^2$-diffeomorphisms.
Repeat the same procedure for each $t=(t_1,\dots,t_d)\in\{-1,+1\}^d$
and collect all diffeomorphisms.
This shows that Condition~{\rm (\ref{i.cond_2})} is nonempty and $C^2$-open (actually $C^1$-open). 
\end{proof}

In order to deal with the positive minimality condition~(\ref{i.cond_1}),
we start by proving the following criterion:

\begin{lemm}[Minimality criterion]\label{l.minimality} 
Let $N$ be a compact connected Riemannian manifold 
and let $H \subset \Homeo(N)$.
Assume that there exists a finite open cover $\{V_i\}$ of $N$
with the following properties:
\begin{itemize}
\item For every $i$ there exists a map $h_i \in \langle H \rangle$ 
whose restriction to $h_i^{-1}(V_i)$ is a (uniform) contraction.
\item The cover $\{V_i\}$ has a Lebesgue number $\delta$ such that
the orbit of every point $x\in N$ is $\delta$-dense in $N$.
\end{itemize}
Then the IFS generated by $H$ is positively minimal.
\end{lemm}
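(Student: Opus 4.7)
Plan: Prove that for each $x\in N$, the closure $K := \overline{\langle H \rangle \cdot x}$ equals $N$. Because $K$ is nonempty and closed, and $N$ is connected, it suffices to show that the open complement $U := N \setminus K$ is empty. The forward invariance $h(K) \subset K$ for $h \in \langle H \rangle$ (inherited from $K$ being an orbit closure) becomes, by taking complements, the backward invariance $h^{-1}(U) \subset U$; in particular each $h_i^{-1}$ pulls open subsets of $U$ back into $U$. Since $h_i \colon h_i^{-1}(V_i)\to V_i$ is a $\lambda_i$-contraction, its inverse $h_i^{-1}$ acts as a $1/\lambda_i$-expansion on $V_i$, so these preimages inflate geometrically.

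Suppose, toward a contradiction, that $U$ is nonempty, and fix a ball $B(y_0,\epsilon_0) \subset U$ with $\epsilon_0 \le \delta$. I would inflate this ball by iteratively choosing an index $j_k$ with $B(y_{k-1},\delta) \subset V_{j_k}$ (possible by the Lebesgue property), setting $y_k := h_{j_k}^{-1}(y_{k-1})$, and passing to the open set $h_{j_k}^{-1}(B(y_{k-1},\epsilon_{k-1})) \subset U$. Writing $\lambda := \max_i \lambda_i < 1$, at each step the pulled-back set contains $B(y_k,\epsilon_{k-1}/\lambda_{j_k})$ intersected with $h_{j_k}^{-1}(V_{j_k})$, so that after at most $\lceil \log_\lambda(\epsilon_0/\delta)\rceil$ iterations one hopes to exhibit a ball of radius $\ge \delta$ inside $U$. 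This would contradict the $\delta$-density of $K$, which forbids $U$ from containing any $\delta$-ball, forcing $U = \emptyset$.

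The delicate point---and what I expect to be the main obstacle---is that the inscribed ball $B(y_k,\epsilon_{k-1}/\lambda_{j_k})$ need not lie entirely inside $h_{j_k}^{-1}(V_{j_k})$: the expansion estimate holds only on $V_{j_k}$, and the intersection may fail to be a genuine ball. I would overcome this via a uniform compactness-based margin: the ``$\delta$-deep core'' $V_i^\delta := \{z : B(z,\delta)\subset V_i\}$ is compact, its $h_i^{-1}$-image is a compact subset of the open set $h_i^{-1}(V_i)$, and hence some $\eta > 0$ (independent of $i$) satisfies $B(h_i^{-1}(z),\eta) \subset h_i^{-1}(V_i)$ for every $z \in V_i^\delta$. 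Incorporating this margin guarantees the inflated sets contain genuine balls of radius $\min(\epsilon_0/\lambda^k,\eta)$ at step $k$; composing enough contractions so that the accumulated expansion combined with the $\eta$-margin produces a ball of radius $\ge \delta$ then completes the argument, via a bookkeeping similar in spirit to the bootstrapping of Proposition~\ref{p.improve}. The whole argument uses only compactness of $N$ (for the margin $\eta$), connectedness (for the clopen-implies-all step), and finiteness of the cover (for uniformity of $\lambda$ and $\eta$).
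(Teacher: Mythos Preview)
Your overall strategy---inflating a ball inside $U = N \setminus \overline{\langle H\rangle \cdot x}$ via the expansions $h_i^{-1}|_{V_i}$ until it has radius $\ge \delta$, contradicting $\delta$-density of the orbit---is sound and essentially dual to the paper's forward approach. You also correctly identify the crux: the inflated ball $B(y_k,\epsilon_{k-1}/\lambda)$ need not lie in $h_{j_k}^{-1}(V_{j_k})$.

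However, your proposed fix via the compactness margin $\eta$ does not close the gap. By your own formula, the inflated radius is $\min(\epsilon_0/\lambda^k,\eta)$, which never exceeds $\eta$; and nothing in the hypotheses forces $\eta \ge \delta$. The sentence ``composing enough contractions so that the accumulated expansion combined with the $\eta$-margin produces a ball of radius $\ge \delta$'' is precisely the missing step, and I do not see how to carry it out: once the radius reaches $\eta$ it stalls there, and you still cannot exhibit a $\delta$-ball disjoint from the orbit.

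The paper resolves this difficulty differently, exploiting the hypothesis that $N$ is a connected Riemannian manifold (so that metric balls are connected). The key claim is: if $\overline{B(y,r)} \subset V_i$, then $h_i\big(B(h_i^{-1}(y),\alpha^{-1}r)\big) \subset B(y,r)$ \emph{entirely}, with no intersection needed. Indeed, points of $B(h_i^{-1}(y),\alpha^{-1}r)$ lying in $h_i^{-1}(V_i)$ map into $B(y,r)$ by the contraction estimate, while points outside map into $N\setminus V_i$; since $\overline{B(y,r)}\subset V_i$, these two target sets are separated, and the connected image (containing $y$) must lie wholly in $B(y,r)$. This removes the $\eta$-cap altogether. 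With this claim your backward inflation works; alternatively, the paper runs the same idea forward, showing directly that a $\delta$-dense orbit is $\alpha\delta$-dense, hence $\alpha^n\delta$-dense for all $n$.
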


\begin{proof}
Since the cover $\{V_i\}$ is finite, there exists $\alpha$ with $0<\alpha<1$
such that each restriction $h_i | h_i^{-1}(V_i)$ is an $\alpha$-contraction.

\begin{clai}\label{cl.balls}
If $y \in N$, $r>0$, 
and $\overline{B(y,r)} \subset V_i$
then 
$h_i \big( B ( h_i^{-1}(y) , \alpha^{-1}r ) \big) \subset B(y,r)$.
\end{clai}

\begin{proof}[Proof of the claim]
Since $h_i | h_i^{-1}(V_i)$ is a (uniform) contraction, we have
$$
y \in V_i \quad \Rightarrow \quad 
h_i \big( B ( h_i^{-1}(y) , \alpha^{-1}r ) \big) \subset B(y,r) \cup (M \setminus V_i) \, .
$$
Since the ball $B ( h_i^{-1}(y) , \alpha^{-1}r )$ is connected\footnote{Here we use that $N$ is a connected Riemannian manifold (and not only a metric space).},
the claim follows.
\end{proof}

Now fix any point $x \in N$, and assume the orbit of $x$ is $\epsilon$-dense for some $\epsilon \le \delta$.
For any $y \in N$, we can find some $V_i$ containing $\overline{B(y,\alpha\epsilon)}$.
By the $\epsilon$-denseness of the orbit of $x$, there is $h \in \langle H \rangle$ such that
$h(x) \in B ( h_i^{-1}(y) , \epsilon )$.
It follows from Claim~\ref{cl.balls} that $h_i \circ h (x) \in B(y,\alpha\epsilon)$.
Since $y$ is arbitrary, this shows that the orbit of $x$ is $\alpha \epsilon$-dense.
By induction, this orbit is $\alpha^n\epsilon$-dense for any $n>0$;
so it is dense, as we wanted to show. 
\end{proof}

If $H$ is a finite set of \emph{diffeomorphisms} of $N$ satisfying the assumptions of Lemma~\ref{l.minimality}, 
then these assumptions are also satisfied for sufficiently small $C^1$-perturbations of the elements of $H$.
In other words, the hypotheses of Lemma~\ref{l.minimality} are $C^1$-robust.\footnote{Using this, it is easy to establish the existence of $C^1$-robustly positively minimal finitely generated IFS's on any compact connected manifold. More interestingly, Homburg~\cite{Homburg} shows that two generators suffice.}

Therefore, to show that 
the positive minimality condition~{\rm (\ref{i.cond_1})} 
has non-empty interior, we are reduced to show the following:

\begin{lemm} 
Given any compact connected manifold $M$, there is $\ell\in \NN^*$ 
and $g_0,\dots, g_{\ell-1}\in \Diff^2(M)$ such that the induced diffeomorphisms $\cF g_0$, \dots, $\cF g_{\ell-1} \in \Diff^1(\cF N)$ satisfy the hypotheses of Lemma~\ref{l.minimality}.
\end{lemm}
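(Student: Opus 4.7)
The plan is to construct generators $g_1, \ldots, g_\ell \in \Diff^2(M)$ so that the induced IFS $\{\cF g_1, \ldots, \cF g_\ell\}$ on $N := \cF M$ satisfies both hypotheses of Lemma~\ref{l.minimality}: (a) a finite open cover $\{V_i\}$ of $N$, each $V_i$ contracted by some element of $\langle \cF G \rangle$, and (b) $\delta$-denseness of every orbit, for $\delta$ the Lebesgue number of the cover.

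The key local ingredient is the following: for every $\xi_0 = (x_0, \flag_0) \in \cF M$ and every $r > 0$ small enough, there exists $g \in \Diff^2(M)$ such that $\cF g$ has $\xi_0$ as a hyperbolic attracting fixed point with simple spectrum whose basin on $\cF M$ contains $B(\xi_0, r)$. I would build $g$ via a $C^2$-chart at $x_0$ sending $\flag_0$ to the standard flag of $\RR^d$; in a small neighborhood $g$ acts as the linear contraction $\Diag(\lambda_1, \ldots, \lambda_d)$ with $1 > \lambda_1 > \cdots > \lambda_d > 0$, and outside it is smoothly extended to the identity by a cutoff/isotopy. By Corollary~\ref{c.stable} the stable flag of $g$ at $x_0$ is exactly $\flag_0$, so $\xi_0$ is attracting for $\cF g$; tuning the chart scale and the eigenvalues $\lambda_j$ ensures that the basin contains the prescribed ball.

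For condition~(a), fix $\delta > 0$ small and choose a finite $(\delta/2)$-net $\{\xi_1, \ldots, \xi_k\}$ in $\cF M$; apply the local ingredient to produce $g_1, \ldots, g_k$ with each $\xi_i$ an attracting fixed point of $\cF g_i$ and basin containing $V_i := B(\xi_i, \delta)$. The $V_i$ cover $\cF M$ with Lebesgue number at least $\delta/2$, and $h_i := (\cF g_i)^{n_i}$ is a uniform contraction on $h_i^{-1}(V_i)$ for $n_i$ large. For condition~(b), I add transition diffeomorphisms: for each pair $(i,j)$ pick $\phi_{ij} \in \Diff^2(M)$ with $\cF \phi_{ij}(\xi_i) = \xi_j$. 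Such maps exist because $\Diff^2(M)$ acts transitively on $\cF M$: $M$ being connected, $\Diff^2(M)$ acts transitively on $M$; and for any fixed $y \in M$, the isotropy subgroup has derivatives at $y$ realizing all of $\GL(T_y M)$ (by a local linear construction with smooth cutoff/isotopy extension), which in turn acts transitively on flags of $T_y M$. Take $G := \{g_1, \ldots, g_k\} \cup \{\phi_{ij}\}_{i,j}$.

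To verify~(b): given $\xi \in \cF M$, it lies in some $V_{i_0}$; iterating $h_{i_0}$ brings $\xi$ arbitrarily close to $\xi_{i_0}$, and then applying $\cF \phi_{i_0, j}$ sends it arbitrarily close to $\xi_j$ for any $j$ (by continuity, provided the contracted image is small enough); since $\{\xi_j\}$ is $(\delta/2)$-dense, the orbit of $\xi$ is $\delta$-dense in $\cF M$. The main obstacle is the local construction in the second paragraph, namely realizing a prescribed flag $\flag_0$ as the stable flag of a $C^2$-diffeomorphism of $M$ with a basin on $\cF M$ of prescribed size. This needs a careful cutoff/gluing so the result remains a diffeomorphism while preserving both the hyperbolic eigenstructure and the size of the attracting basin on the flag bundle; Corollary~\ref{c.stable} then provides the identification of the stable flag from the ordered eigendecomposition.
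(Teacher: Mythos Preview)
Your approach is essentially the same as the paper's: produce local hyperbolic attractors on $\cF M$ for condition~(a), and use transitivity of $\Diff^2(M)$ on $\cF M$ for condition~(b). The paper sidesteps what you call the main obstacle by reversing the order of choices: rather than prescribing the basin radius $\delta$ in advance, it first covers $\cF M$ by images $V(x,\flag)=\cF g(U(x,\flag))$ of whatever contraction domains arise, extracts a finite subcover, and only then reads off a Lebesgue number~$\delta$; condition~(b) is then handled by a single compactness argument on $\cF M\times\cF M$ yielding finitely many maps whose one-step images of any point are already $\delta$-dense, so no preliminary contraction toward a net point is needed.
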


\begin{proof} 
This is another easy compactness argument.
	
For any $x\in M$ and any flag $\flag$ at $x$, there is a diffeomorphism $g\in \Diff^2(M)$ such that $x$ is a hyperbolic attracting fixed point of $g$ such that the eigenvalues of $Dg(y)$ have different moduli,
and moreover $\flag$ is the stable flag of $g$ at $x$. 
Then $(x,\flag)$ is a hyperbolic attracting fixed point of $\cF g$,
and in particular there is a open neighborhood $U{(x,\flag)}$  on which  $\cF g$ induces a uniform contraction.
Denote $V{(x,\flag)}=(\cF g)(U{(x,\flag)})$. The sets $V{(x,\flag)}$ form an open cover of $\cF M$, so that one can extract a finite subcover $V_0$, \dots, $V_{\ell_0-1}$. 
Let $g_0$, \dots, $g_{\ell_0-1}$ be the corresponding diffeomorphisms as above.

Given two points $(x,\flag)$, $(x',\flag') \in\cF M$,
it follows from the connectedness of $M$ that 
there is $g\in \Diff^2(M)$ such that $\cF g(x,\flag)=(x',\flag')$. 
A simple compactness argument shows that there exist $\ell_1$ and $g_{\ell_0}$, \dots, $g_{\ell_0+\ell_1-1}\in \Diff^2(M)$ such that for every $(x,\flag) \in\cF M$, the set
$\{\cF g_{\ell_0}(x,\flag),\dots, \cF g_{\ell_0+\ell_1-1}(x,\flag)\}$ is $\delta$-dense in $\cF M$,
where $\delta$ is a Lebesgue number of the cover $\{V_0, \dots, V_{\ell_0-1}\}$. 

Denote $\ell=\ell_0+\ell_1$. Now the action of $H = \{\cF g_0, \dots \cF g_{\ell-1}\}$ on $N = \cF M$ satisfies all the hypotheses of Lemma~\ref{l.minimality}, which allows us to conclude.
\end{proof}

This completes the proof of Proposition~\ref{p.open_conditions};
as explained in Section~\ref{s.conditions}, the 
main Theorems \ref{t.main} and \ref{t.mainflag} follow.

\begin{rema}\label{r.orient}
It is possible to adapt the proof of Theorem~\ref{t.mainflag}
for the \emph{oriented} flag bundle $\ordF M$,  
provided that the manifold $M$ is non-orientable
(basically because $\ordF M$ is then connected).
However, if $M$ is orientable, then the corresponding version of Theorem~\ref{t.mainflag} is 
false.
For example, since every diffeomorphism of $M = \CC P^2$ preserves orientation
(see \cite[p.~140]{Hirsch}), the induced action on $\ordF M$ fixes each of the two connected components,
and hence no $1$-step skew-product $\ordF \phi_G$ can be transitive.
\end{rema}

\section{Positive entropy}\label{s.entropy}

\begin{proof}[Proof of Theorem~\ref{t.C1easy}]
Let us say that an IFS $G = (g_0, \dots, g_{\ell-1}) \in (\Diff^1(M))^2$ 
has the \emph{bi-maneuverability property}
if $\ell$ is even and 
both IFS's 
\begin{equation}\label{e.partition}
G_0 = (g_0, \dots, g_{\ell/2-1}) \quad \text{and} \quad G_1 = (g_{\ell/2}, \dots, g_{\ell-1}).
\end{equation}
have the maneuverability property.

The set $\cV \subset (\Diff^1(M))^\ell$ of the IFS's with 
the bi-maneuverability property is nonempty and open,
provided $\ell$ is even and large enough;
this follows immediately from the analogous statements for maneuverability that we proved in 
Section~\ref{s.open}.
We will prove that the set $\cV$ satisfies the conclusions of the theorem.

Fix $G \in \cV$.
Fix any $(x_0, \flag_0) \in M \times \cF M$.
For any $\theta = (\theta_n)_n \in \{0,1\}^{\ZZ} = 2^{\ZZ}$, 
we will define a sequence $\omega(\theta) = (\omega_n(\theta))_n \in \ell^\ZZ$,
First we define the positive part of the sequence:
Assuming by induction that
$\omega_0(\theta)$, \dots, $\omega_{n-1}(\theta) \in \{ 0,\dots, \ell-1\}$ were already defined,
define 
$$
(x_n(\theta),\flag_n(\theta)) = \cF g_{[\omega_{n-1}(\theta) \dots \omega_0(\theta)]} (x_0,\flag_0).
$$
By the bi-maneuverability property, we can 
choose a symbol $\omega_n(\theta)$ satisfying the following properties:
\begin{itemize}
	\item for each $i \in \{1,\dots,d\}$, the number 
	$\log M_{i,i}(Dg_{\omega_n(\theta)} (x_n(\theta)),\flag_n(\theta))$
	is negative if $\log M_{i,i}(Dg_{[\omega_{n-1}(\theta) \dots \omega_0(\theta)]} (x_0), \flag_0)$ 
	is positive and positive otherwise.
	\item $\omega_n(\theta) < \ell/2$ if and only if $\theta_n = 0$.
\end{itemize}
This defines the positive part of the sequence $\omega(\theta)$.
The negative part is defined analogously, using the inverse maps.

The construction implies that 
$$
|\log M_{i,i} (Dg_{[\omega_m(\theta) \omega_{m-1}(\theta) \dots \omega_{n}(\theta)]} (x_n(\theta),\flag_n(\theta) ))| \le 2C, \quad \text{for all $m \ge n$ in $\ZZ$,}
$$
where $C$ is a constant with property \eqref{e.def_C}.

Define the following compact subset of $\ell^\ZZ \times \cF M$:
$$
\tilde \Lambda_G := \text{ closure of }
\Big\{ \big(\sigma^m(\omega(\theta)), x_m(\theta), \flag_m(\theta) \big) ;\; \theta\in 2^\ZZ, \ m \in \ZZ \Big\} .
$$
Let $\Lambda_G$ be the projection of $\tilde \Lambda_G$ in $\ell^\ZZ \times M$.

By continuity, we have
$$
\| M_{i,i} ((Dg_{[\omega_{n-1} \dots \omega_0]} (x, \flag) )) \|
\le C,
\quad \text{for all $(\omega,x,\flag) \in \tilde \Lambda_G$, $n\ge 0$, $i\in\{1,\dots,d\}$.}
$$

Let us check that $\Lambda_G$ has the zero exponents property (\ref{i.easy1}).
Let $\mu$ be an ergodic $\phi_G$-invariant measure whose support is contained in $\Lambda_G$.
Let $\nu$ be a lift of $\mu$ that is ergodic for $\cF \phi_G$.
It follows from the Ergodic Theorem that 
the Furtenberg vector $\vec\Lambda(\nu) = (\Lambda_1, \dots, \Lambda_d)$ defined by 
\eqref{e.furstenberg} is zero.
By Proposition~\ref{p.Lyap_permutation}, the fibered Lyapunov exponents of $\mu$ are all zero.

Finally, let us check the positive entropy property (\ref{i.easy2}).
We have the following commutative diagram:
$$
\begin{tikzcd}
\Lambda_G \arrow{r}{\phi_G} \arrow{d}{\pi} &\Lambda_G \arrow{d}{\pi}\\
2^\ZZ \arrow{r}{\sigma_2}                  &2^\ZZ
\end{tikzcd}
$$
where $\sigma_2$ is the $2$-shift, 
and 
$\pi(\omega,x) = \theta$ with $\theta_n = 1$ if and only if $\omega_n<\ell/2$.
Since $\pi$ is surjective, $h_\mathrm{top}(\phi_G|\Lambda_G) \ge h_\mathrm{top}(\sigma_2) = \log 2$.

This concludes the proof of Theorem~\ref{t.C1easy}.
\end{proof}

\begin{rema}
The $C^2$-open sets of IFS satisfying the conclusions of Theorem~\ref{t.main}
can be taken also satisfying the conclusions of Theorem~\ref{t.C1easy}:
it suffices to replace maneuverability by bi-maneuverability in the construction.
\end{rema}

\begin{rema}
As it is evident from its proof, Theorem~\ref{t.C1easy} has a flag bundle version.
\end{rema}

\begin{rema} 
Artur Avila suggested an alternative proof of Theorem~\ref{t.C1easy}
using ellipsoid bundles instead of flag bundles,
and obtaining compact sets $\Lambda_G$ where derivatives along orbits 
are uniformly bounded away from zero and infinity.
\end{rema}

\begin{ack}
We are grateful to the referee for some corrections.
\end{ack}
	

\end{document}